\newtheorem{theorem}{Theorem}
\newtheorem{proposition}[theorem]{Proposition}
\newtheorem{lemma}[theorem]{Lemma}
\newtheorem{definition}[theorem]{Definition}
\newtheorem{corollary}[theorem]{Corollary}
\newtheorem{example}[theorem]{Example}
\definecolor{darkgreen}{rgb}{0,0.6,0}
\providecommand{\F}{\mathbb{F}}
\providecommand{\N}{\mathbb{N}}
\DeclareMathOperator{\PG}{PG}
\newcommand{\cP}{\mathcal{P}}
\newcommand{\cM}{\mathcal{M}}
\newcommand{\gaussm}[3]{\genfrac{[}{]}{0pt}{}{#1}{#2}_{#3}}
\title{Bounds on the minimum distance of locally recoverable codes}
\author{Sascha Kurz}
\affil{Mathematisches Institut, Universität Bayreuth, D-95440 Bayreuth, Germany, sascha.kurz@uni-bayreuth.de \\ \quad and \quad \\ 
Department of Data Science, Friedrich-Alexander-Universitä\"at Erlangen-N\"urnberg, D-91058 Erlangen, Germany, sascha.kurz@fau.de}
\date{}
\begin{document}

\maketitle

\begin{abstract}
  \noindent
  We consider locally recoverable codes (LRCs) and aim to determine the smallest 
  possible length $n=n_q(k,d,r)$ of a linear $[n,k,d]_q$-code with locality $r$. 
  For $k\le 7$ we exactly determine all values of $n_2(k,d,2)$ and for $k\le 6$ 
  we exactly determine all values of $n_2(k,d,1)$. For the ternary field we also 
  state a few numerical results. As a general result we prove that $n_q(k,d,r)$ 
  equals the Griesmer bound if the minimum Hamming distance $d$ is sufficiently 
  large and all other parameters are fixed.

  \smallskip  
  
  \noindent
  \textbf{Mathematics Subject Classification:} 94B27, 94B05\\ 
  \textbf{Keywords:} linear codes, locally recoverable codes, data storage, bounds for parameters      
\end{abstract}

\section{Introduction}
\label{sec_introduction}
A code over a finite alphabet is called \emph{locally recoverable} (LRC) if every symbol in the encoding is a function
of a small number of (at most $r$) other symbols. They have e.g.\ applications in distributed storage and communications. 
Here we will consider linear codes over a finite field $\F_q$ as alphabet. An $[n,k,d]_q$-code $C$ is a $k$-dimensional 
subspace of $\F_q^n$ with minimum Hamming distance (at least) $d$. We also speak of $[n,k]_q$-codes if we do not want to specify 
the minimum Hamming distance. A code symbol is said to have $r$-locality if it can be repaired from at most $r$ other code symbols. 
An $(n,k,r)_q$-LRC is an $[n,k]_q$-code with $r$-locality for all code symbols. For given parameters $n$, $k$ and alphabet size 
$q$ one would like to have codes with small locality $r$, allowing e.g.\ a fast recovery process when the code is used for
distributed storage, and a large minimum Hamming distance $d$, in order to deal with possible errors in the transmission. However, 
there is a natural tradeoff between minimizing $r$ and maximizing $d$. To this end we mention the bound
\begin{equation}
  d\le n-k -\left\lceil\frac{k}{r}\right\rceil+2 \label{ie_upper_bound_singleton}
\end{equation}
by Gopalan et~al.\ \cite{gopalan2012locality}, which reduces to the classical Singleton bound when $r=k$. Surely we have $1\le r\le k$ and MDS codes attain the 
bound for $r=k$. Replicating each symbol in an MDS code twice yields codes attaining the bound for $r=1$. If the field size is sufficiently large then the upper 
bound (\ref{ie_upper_bound_singleton}) can always be attained with equality \cite{gopalan2012locality}. Further constructions requiring smaller field sizes can 
e.g.\ be found in \cite{tamo2014family}, i.e.\ $q\ge n+1$ assuming that $r+1$ divides $n$, see also \cite{luo2018optimal,tamo2015cyclic}. However, small finite 
fields as alphabets are often desirable for practical reasons \cite{goparaju2014binary}. In \cite{tamo2016optimal} the search for constructions meeting the
stated bound (\ref{ie_upper_bound_singleton}) with equality was stated as an open problem. Constructions for the binary case can e.g.\ be found in \cite{goparaju2014binary,huang2015cyclic,huang2016binary}. 
For $q\in\{2,3,4\}$ all cases where Inequality~(\ref{ie_upper_bound_singleton}) can be attained with equality were characterized in \cite{hao2016some}, 
\cite{hao2017optimal}, and \cite{xi2022optimal}, respectively. For $q\ge n+1$ codes with $d= n-k -\left\lceil\tfrac{k}{r}\right\rceil+1$, i.e.\ one less than the
upper bound (\ref{ie_upper_bound_singleton}), indeed exist \cite{tamo2014family}. Using the function $k_{\operatorname{opt}}^q(n,d)$ for the maximum possible dimension $k$ of an 
$[n,k,d]_q$-code the bound
\begin{equation}
  k\le \min_{t\in\mathbb{N}} \left\{rt+ k_{\operatorname{opt}}^q(n-t(r+1),d)\right\}
\end{equation}
for $[n,k,d]_q$-codes with locality $r$ was obtained in \cite{cadambe2015bounds} and e.g.\ used in \cite{hao2020bounds} to conclude explicit parametric bounds on codes with a given locality.

\medskip

Another classical bound for linear codes is the Griesmer bound \cite{griesmer1960bound} relating the minimal possible length $n_q(k,d)$ of an $[n,k,d]_q$-code to its other
parameters:
\begin{equation}
  n_q(k,d)\ge g_q(k,d):=\sum_{i=0}^{k-1}\left\lceil\frac{d}{q^i}\right\rceil\!\!.\label{ie_upper_bound_griesmer}
\end{equation}
Solomon and Stifler gave a construction showing that this bound can be attained for any given parameters $k$ and $q$ if $d$ is sufficiently large \cite{solomon1965algebraically}. Consequently, 
the determination of the function $n_q(k,\cdot)$ becomes a finite problem for each pair of parameters $k$, $q$. For $k\le 7$, the function $n_2(k,\cdot)$ has been completely determined 
in \cite{baumert1973note} and \cite{van1981smallest}. After a lot of work of different authors, the determination of $n_2(8,d)$ has been completed in \cite{bouyukhev2000smallest}. 
Here we will show that for any given pair of parameters $k$, $q$ there exist $(n,k,2)_q$-LRCs and $(n,k,1)_q$-LRCs with length $n=n_q(k,d)$ and minimum Hamming distance $d$ assuming 
that $d$ is sufficiently large. So, also the determination of the largest possible Hamming distance of $(n,k,2)_q$-  and $(n,k,1)_q$-LRCs becomes a finite problem for any parameters $k$ and $q$.  

In the literature also special subclasses of LRCs, like e.g.\ maximally recoverable codes \cite{chen2007maximally}, have been considered. The required field size of a
maximally recoverable codes was e.g.\ improved in \cite{grezet2019uniform}. There the authors used matroid theory, see also \cite{grezet2017binary,westerback2016combinatorics}, to show 
non-existence of some codes with prescribed parameters by characterising linearity over small fields via forbidden uniform minors. In \cite{prakash2012optimal} the authors introduced the notion 
of $(r,\delta)$-LRCs. Corresponding LP and other bounds can e.g.\ be found in \cite{gruica2023lrcs}. In \cite{grezet2019alphabet} the authors introduced a slight variant of the definition of 
locality, called dimension-locality, and study corresponding bounds.  

The remaining part of this paper is structured as follows. In Section~\ref{sec_preliminaries} we introduce the necessary preliminaries. A geometric reformulation of locality is then discussed in 
Section~\ref{sec_geometric} and used to bound the minimal possible lengths of $[n,k,d]_q$-codes with locality $r$. Then we summarize several constructions and non-existence results for LRCs in 
Section~\ref{sec_constructions}. For several small parameter sets we were able to completely determine the minimum possible length of an $[n,k,d]_q$-code with locality $r\in\{1,2\}$ as a function
of the minimum Hamming distance $d$. We close the paper in Section~\ref{sec_enumeration} by stating enumeration results for $[n,k,d]_q$-codes with locality $r$ for some small parameters.
The focus on exact values for small parameters is thought as a supplement to the existing literature which mainly considers different general bounds. While we cannot draw asymptotic conclusions
from our obtained data, we remark that the number of used nodes $n$ is typically rather small in many real-world applications, see e.g.\ HDFS-Xorbas used by Facebook \cite{asteris2013xoring} 
and Windows Azure storage \cite{huang2012erasure}. Since variants and extensions of LRCs are comprehensive, we refrain from discussing similar results for some of them. 

\section{Preliminaries}
\label{sec_preliminaries}
For a given linear code $C$ we denote its minimum Hamming distance by $d(C)$ and the dual code by $C^\perp$. Apart from the field size $q$, the length $n$, the dimension $k$, and the minimum Hamming 
distance $d$ of a linear $[n,k,d]_q$-code we consider the \emph{locality} as an additional parameter.

\begin{definition}
  A (linear) code $C\subseteq \mathbb{F}_q^n$ has \emph{locality} $r$ if for every coordinate $i\in\{1,\dots,n\}$ there exists a set $S_i\subseteq\{1,\dots,n\}$ with $i\notin S_i$, $\left|S_i\right|\le r$, 
  and if $c_j=c'_j$ for all $j\in S_i$ for two codewords $c,c'\in C$ then we have $x_i=y_i$.
\end{definition} 
We also speak of a \emph{locally recoverable code} (LRC). The set $S_i$ is called a \emph{recovery set} for coordinate $i$. Denoting the projection map onto the coordinates from some set
$S\subseteq\{1,\dots,n\}$ by $\pi_S$, we can introduce \emph{recovery functions} $f_i\colon \pi_{S_i}(C)\to \mathbb{F}_q$ satisfying $f_i(\pi_{S_i}(c))=c_i$ for all $c\in C$ and all $1\le i\le n$. 

\begin{lemma}
  (\cite[Lemma 10]{agarwal2018combinatorial})
  Let $C$ be an $[n,k]_q$-code and $S_i$ a recovery set for coordinate $1\le i\le n$ with recovery function $f_i$. Then, $f_i$ is an $\mathbb{F}_q$-linear map.
\end{lemma}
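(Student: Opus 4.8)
The plan is to verify the two defining properties of an $\mathbb{F}_q$-linear map directly from the hypotheses, exploiting that both the code $C$ and the projection $\pi_{S_i}$ are linear. First I would record that $f_i$ is genuinely a well-defined function on its domain $\pi_{S_i}(C)$: by the recovery property, whenever $\pi_{S_i}(c)=\pi_{S_i}(c')$ for $c,c'\in C$ we have $c_i=c_i'$, so the prescription $f_i(\pi_{S_i}(c))=c_i$ does not depend on the chosen preimage $c$. I would also note that the domain $\pi_{S_i}(C)$ is a linear subspace of $\mathbb{F}_q^{S_i}$, being the image of the subspace $C$ under the linear map $\pi_{S_i}$; this is what makes the notion of $\mathbb{F}_q$-linearity of $f_i$ meaningful in the first place.

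For the linearity itself, I would take two arbitrary elements of the domain, write them as $\pi_{S_i}(c)$ and $\pi_{S_i}(c')$ for some $c,c'\in C$, and fix scalars $\alpha,\beta\in\mathbb{F}_q$. Since $C$ is linear we have $\alpha c+\beta c'\in C$, and since $\pi_{S_i}$ is linear we have $\pi_{S_i}(\alpha c+\beta c')=\alpha\pi_{S_i}(c)+\beta\pi_{S_i}(c')$. Applying the defining identity of $f_i$ to the codeword $\alpha c+\beta c'$ then yields
\begin{equation*}
f_i\big(\alpha\pi_{S_i}(c)+\beta\pi_{S_i}(c')\big)=f_i\big(\pi_{S_i}(\alpha c+\beta c')\big)=(\alpha c+\beta c')_i=\alpha c_i+\beta c_i'=\alpha f_i\big(\pi_{S_i}(c)\big)+\beta f_i\big(\pi_{S_i}(c')\big),
\end{equation*}
which is exactly the statement that $f_i$ is $\mathbb{F}_q$-linear.

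The argument reduces to a single computation once the setup is arranged, so I do not anticipate a serious obstacle. The only point requiring a moment's care is the well-definedness step: $f_i$ is specified only through its values on preimages $\pi_{S_i}(c)$, and it is the recovery property of $S_i$ (rather than any additional hypothesis) that guarantees the value $c_i$ is independent of the representative. This is precisely the place where the defining property of a recovery set enters, and keeping it logically prior to the linearity computation is what makes the chain of equalities above legitimate.
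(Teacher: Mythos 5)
Your proof is correct: well-definedness of $f_i$ follows from the recovery property exactly as you argue, and the single displayed computation, using that $\alpha c+\beta c'\in C$ and that $\pi_{S_i}$ is linear, establishes $\mathbb{F}_q$-linearity. The paper itself gives no proof of this lemma (it only cites \cite[Lemma~10]{agarwal2018combinatorial}), and your argument is the standard direct verification one would expect there, so nothing further is needed.
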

So, we can express the locality of a given linear code $C$ via the existence of certain codewords in its dual code $C^\perp$:
\begin{lemma} 
  \label{lemma_dual_distance}
  A linear $[n,k]_q$-code $C$ has locality $r\ge 1$ iff for every coordinate $1\le i\le n$ there exists a dual codeword $c\in C^\perp$ with weight at most $r+1$ that contains 
  $i$ in its support.
\end{lemma}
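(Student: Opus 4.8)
The plan is to establish the two implications of the equivalence separately, in each case exploiting the correspondence between a recovery relation for coordinate $i$ and a low-weight vector in $C^\perp$ whose support contains $i$. The essential input is the preceding lemma, which guarantees that every recovery function $f_i$ is $\mathbb{F}_q$-linear; this is precisely what lets us translate the combinatorial recovery condition into a single linear equation, and hence into the language of the dual code.

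For the forward direction, suppose $C$ has locality $r$ and fix a coordinate $i$ together with a recovery set $S_i$ of size at most $r$ with $i\notin S_i$. By the preceding lemma the associated recovery function is linear, so there are scalars $\lambda_j\in\mathbb{F}_q$ for $j\in S_i$ with $c_i=\sum_{j\in S_i}\lambda_j c_j$ for every $c\in C$. I would then define $a\in\mathbb{F}_q^n$ by setting $a_i=1$, $a_j=-\lambda_j$ for $j\in S_i$, and $a_\ell=0$ otherwise. The displayed identity rewrites as $\langle a,c\rangle=0$ for all $c\in C$, so $a\in C^\perp$; moreover $\supp(a)\subseteq\{i\}\cup S_i$ forces $\w(a)\le r+1$, and $a_i=1\neq 0$ places $i$ in the support, as required.

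For the converse, suppose that for each coordinate $i$ there is a dual codeword $a\in C^\perp$ with $\w(a)\le r+1$ and $a_i\neq 0$. I set $S_i=\supp(a)\setminus\{i\}$, so that $|S_i|\le r$ and $i\notin S_i$. Since $a\in C^\perp$ we have $a_i c_i+\sum_{j\in S_i}a_j c_j=0$ for every $c\in C$, and because $\mathbb{F}_q$ is a field with $a_i\neq 0$ we may invert $a_i$ and solve $c_i=-a_i^{-1}\sum_{j\in S_i}a_j c_j$. This exhibits $c_i$ as a function of $\pi_{S_i}(c)$, so any two codewords agreeing on $S_i$ must agree on coordinate $i$; thus $S_i$ is a valid recovery set and $C$ has locality $r$.

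The argument is short and presents no real computational obstacle; the only point demanding care is that the forward direction genuinely relies on the linearity of $f_i$ supplied by the previous lemma, since without it one could not guarantee that the recovery relation is captured by a single linear functional, that is, by one codeword of $C^\perp$ rather than by a more intricate algebraic dependence among the coordinates. The converse, by contrast, uses only that $a_i$ is invertible over the field, which is immediate.
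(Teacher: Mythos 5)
Your proof is correct and is precisely the argument the paper intends: the lemma is stated without proof immediately after the linearity result of Agarwal et al., and your two directions (linearity of $f_i$ yielding a dual codeword supported on $\{i\}\cup S_i$, and invertibility of $a_i$ yielding a recovery set $\supp(a)\setminus\{i\}$) are the standard translation. The only implicit step worth noting is that the linear map $f_i\colon \pi_{S_i}(C)\to\mathbb{F}_q$ must be extended from the subspace $\pi_{S_i}(C)$ to all of $\mathbb{F}_q^{S_i}$ to obtain the scalars $\lambda_j$, which is routine.
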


\begin{definition}
  An $[n, k]_q$-code $C$ is \emph{non-degenerate}, if there does not exist a coordinate $1\le i\le n$ such that $c_i = 0$ for all codewords $c \in C$. We call 
  $C$ projective if the coordinates of the codewords are linearly independent; that is, there exists no coordinate $i\neq j\in\{1,\dots,n\}$ and $\lambda\in\mathbb{F}_q\backslash\{0\}$ 
  such that $c_i = \lambda\cdot c_j$ for every $c \in C$.
\end{definition}
In other words, a linear code $C$ is non-degenerate if every column of a generator matrix $G$ of $C$ is non-zero. If no column of a generator matrix $C$ is a scalar multiple of another 
column, then the corresponding linear code $C$ is projective.

By $\PG(k-1,q)$ we denote the finite projective geometry of dimension $k-1$ and order $q$. A well-known and often exploited interpretation of non-degenerated linear
codes is the following. The columns of a generator matrix $G$ of a non-degenerated $[n,k]_q$-code $C$ may be interpreted as points of the projective space
$\PG(k-1,q)$. In the other direction, a multiset of points $\mathcal{M}$ in $\PG(k-1,q)$ is a mapping from the set of points to the natural integers. Here, for each point $P$ 
we call $\mathcal{M}(P)$ the multiplicity of $P$. The cardinality $|\mathcal{M}|$ of our multiset $\mathcal{M}$ is the sum over all point multiplicities and equals the 
length $n$ of the corresponding (non-degenerate) code $C$. By $\gamma(\mathcal{M})$ we denote the maximum point multiplicity of $\mathcal{M}$. So, we have $d(C^\perp)=2$ iff
$\gamma(\mathcal{M})\ge 2$ and $\gamma(\mathcal{M})=1$ iff $C$ is projective. Using the geometric language we call $2-$, $3-$, $4-$, and $(k-1)$-dimensional subspaces lines, planes, 
solids, and hyperplanes, respectively. So, each dual codeword $c\in C^\perp$ of weight three geometrically corresponds to a triple of points spanning a line and each dual codeword 
of weight $2$ geometrically corresponds to a point with multiplicity at least $2$. We call a multiset of points $\cM$ in $\PG(k-1,q)$ \emph{spanning} if the points with positive 
multiplicity span the ambient space $\PG(k-1,q)$. The multiset of points corresponding to an $[n,k]_q$-code is always spanning. In the other direction we have that a multiset of points in
$\PG(k-1,q)$ might correspond to an $[n,k']_q$-code with $k'<k$. The minimum Hamming distance of $C$ is at least $d$ iff we have $\mathcal{M}(H)\le n-d$ for every 
hyperplane $H$, where $\mathcal{M}(H)$ is defined as the sum of all multiplicities of the points contained in $H$, i.e., $\cM(H)=\sum_{P\le H} \mathcal{M}(P)$. Writing
$\gaussm{n}{k}{q}$ for the number of $k$-dimensional subspaces in $\PG(n-1,q)$ we can state that each $t$-dimensional subspace contains $\gaussm{t}{1}{q}$ points and 
$\PG(n-1,q)$ contains $\gaussm{n}{n-1}{q}=\gaussm{n}{1}{q}$ hyperplanes in total. Since each point is contained in $\gaussm{k-1}{1}{q}$ hyperplanes and each pair of different points 
is contained in $\gaussm{k-2}{1}{q}$ hyperplanes, assuming $k\ge 3$, we have 
\begin{equation}
  (n-d)\cdot\gaussm{k-1}{1}{q}\ge \sum_{H\,:\,P\le H} \cM(H)=n\cdot \gaussm{k-2}{1}{q}+q^{k-2}\cdot \cM(P) \label{eq_point_mult_ub}
\end{equation}       
for every point $P$. Similarly, 
\begin{equation}
  (n-d)\cdot q^{k-1} \ge \sum_{H\,:\,P\not\le H} \cM(H)= q^{k-2}\cdot(n-\cM(P)) \label{eq_point_mult_lb}
\end{equation}
for every point $P$. For two multisets of points $\cM,\cM'\in\PG(k-1,q)$ we write $\cM+\cM'$ for the multiset of points in $\PG(k-1,q)$ with multiplicity $\cM(P)+\cM'(P)$ for every 
point $P$. Similarly, for each multiset $\cM$ in $\PG(k-1,q)$ and each integer $t\ge 1$ we write $t\cdot\cM$ for the multiset of points in $\PG(k-1,q)$ with multiplicity $t\cdot\cM(P)$ 
for every point $P$.   

\section{Geometric reformulation of locality and minimal possible lengths of $\mathbf{[n,k,d]_q}$-codes with locality $\mathbf{r}$}
\label{sec_geometric}

Directly from Lemma~\ref{lemma_dual_distance} we conclude:
\begin{lemma}
  \label{lemma_locality_geometric}
  Let $C$ be a linear $[n,k]_q$-code and $\cM$ be the corresponding multiset of points in $\PG(k-1,q)$ of cardinality $n$. Then, $C$ has locality $r\ge 1$ iff for every point $P$ 
  with positive multiplicity $\cM(P)\ge 1$ either $\cM(P)\ge 2$ or there are $t\le r$ points with positive multiplicity and being different from $P$ that span an $t$-dimensional subspace 
  $S$ with $P\le S$.
\end{lemma}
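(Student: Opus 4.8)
The plan is to translate the combinatorial locality criterion of Lemma~\ref{lemma_dual_distance} into the geometric language of the multiset $\cM$ by using the standard dictionary between dual codewords and linear dependencies among the columns of a generator matrix. I would fix a generator matrix $G$ of the (non-degenerate) code $C$ and let $G_1,\dots,G_n$ denote its columns, so that the $i$-th column spans the projective point $P_i$ with $\cM(P_i)\ge 1$. A vector $h\in\F_q^n$ lies in $C^\perp$ precisely when $\sum_{j=1}^n h_j G_j=0$; hence a dual codeword $h$ of weight $w$ with $i\in\supp(h)$ (so $h_i\ne 0$) is equivalent to a linear dependence $G_i=\sum_{j\in\supp(h)\setminus\{i\}}\mu_j G_j$ among $w-1$ further columns, i.e.\ to the statement that $P_i$ lies in the span of the $w-1$ points $\{P_j:j\in\supp(h)\setminus\{i\}\}$. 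This equivalence is what I would establish first and then feed into both directions.

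For the forward direction I would fix a point $P$ with $\cM(P)\ge 1$, pick a coordinate $i$ with $P_i=P$, and apply Lemma~\ref{lemma_dual_distance} to obtain a dual codeword $h$ of weight $w\le r+1$ with $i\in\supp(h)$. If $\cM(P)\ge 2$ the first alternative of the disjunction already holds and nothing more is needed. Otherwise $\cM(P)=1$, so every $j\in\supp(h)\setminus\{i\}$ yields a point $P_j\ne P$, and the displayed dependence shows $P\le S:=\langle P_j:j\in\supp(h)\setminus\{i\}\rangle$, a subspace of vector dimension $t:=\dim S\le w-1\le r$ (with $t\ge 1$ since $G_i\ne 0$). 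Selecting a basis of $S$ among these points produces $t\le r$ points of positive multiplicity, all different from $P$, spanning a $t$-dimensional subspace containing $P$, which is exactly the second alternative.

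For the converse I would reverse this construction coordinate by coordinate. Given a coordinate $i$ with point $P=P_i$, I use whichever alternative of the geometric condition holds for $P$: if $\cM(P)\ge 2$, a second column $G_j=\lambda G_i$ supplies the dual codeword determined by $G_i-\lambda^{-1}G_j=0$, of weight $2\le r+1$ (as $r\ge 1$) and containing $i$; if instead $P$ lies in the span of $t\le r$ linearly independent points $P_{j_1},\dots,P_{j_t}$ of positive multiplicity, then writing $G_i=\sum_{l=1}^t\mu_l G_{j_l}$ gives the dual codeword with $h_i=1$ and $h_{j_l}=-\mu_l$, of weight at most $t+1\le r+1$ and containing $i$. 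Invoking Lemma~\ref{lemma_dual_distance} once more then certifies locality $r$.

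The routine bookkeeping aside, the one step requiring genuine care---and which I regard as the main obstacle---is the passage, in the forward direction, from an arbitrary dependent support to a linearly independent spanning set: Lemma~\ref{lemma_dual_distance} only guarantees a recovery relation among at most $r$ further columns, whereas the geometric statement demands exactly $t$ points spanning a $t$-dimensional subspace $S$ with $P\le S$. One must verify that thinning the recovering points down to a basis of their span preserves both the bound $t\le r$ and the containment $P\le S$, while simultaneously tracking the distinction between the disjuncts $\cM(P)\ge 2$ and $\cM(P)=1$ so that the selected points are genuinely different from $P$ and of positive multiplicity.
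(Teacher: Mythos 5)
Your proposal is correct and follows exactly the route the paper intends: the paper states this lemma as following ``directly'' from Lemma~\ref{lemma_dual_distance} without writing out a proof, and your argument is precisely that derivation made explicit, via the correspondence between weight-$w$ dual codewords with $i$ in their support and linear dependencies expressing column $G_i$ in terms of $w-1$ other columns. The care you take with the two disjuncts (so that the recovering points are genuinely different from $P$ when $\cM(P)=1$) and with thinning the support to a basis so that $t$ points span a $t$-dimensional subspace is exactly the bookkeeping the paper leaves to the reader.
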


So, locality is also a geometric property of a multiset of points $\cM$ and we directly say that $\cM$ has locality $r$ is its corresponding code has locality $r$. We remark that 
especially the locality of a linear code does not depend on a specific representation in terms of a generator matrix with is e.g.\ different for private information retrieval (PIR) codes, 
see \cite[Proposition 9]{kurz2021pir}. For small values of $r$ we can also spell out the condition of Lemma~\ref{lemma_locality_geometric} more directly.
\begin{lemma}
  \label{lemma_locality_1}
  Let $C$ be a linear $[n,k]_q$-code and $\cM$ be the corresponding multiset of points in $\PG(k-1,q)$ of cardinality $n$. Then, $C$ has locality $1$ iff every point $P$ 
  with positive multiplicity $\cM(P)\ge 1$ has multiplicity at least $2$. 
\end{lemma}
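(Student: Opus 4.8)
The plan is to specialize the characterization of Lemma~\ref{lemma_locality_geometric} to the case $r=1$ and to show that, under this restriction, the ``spanning'' alternative collapses, leaving only the multiplicity condition. Concretely, Lemma~\ref{lemma_locality_geometric} states that $C$ has locality $1$ iff every point $P$ with $\cM(P)\ge 1$ either already satisfies $\cM(P)\ge 2$, or can be covered by $t\le 1$ points of positive multiplicity, distinct from $P$, that span a $t$-dimensional subspace $S$ with $P\le S$. So the entire content of the lemma is to argue that this second alternative can never occur when $r=1$, whence locality $1$ is equivalent to $\cM(P)\ge 2$ for all $P$ with $\cM(P)\ge 1$.

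First I would recall the dimension convention fixed in \cref{sec_preliminaries}: points of $\PG(k-1,q)$ correspond to $1$-dimensional subspaces, lines to $2$-dimensional subspaces, and so on. Under this convention the alternative for $r=1$ only involves $t\in\{0,1\}$. If $t=0$, then the empty set of points spans the zero subspace $S=\{0\}$, which contains no point, so $P\le S$ is impossible. If $t=1$, then a single point $Q\neq P$ spans the $1$-dimensional subspace $S=\langle Q\rangle$; since $P$ is itself a $1$-dimensional subspace, $P\le S$ forces $P=\langle Q\rangle=Q$, contradicting $Q\neq P$. Hence in both cases the spanning alternative fails, and the condition of Lemma~\ref{lemma_locality_geometric} for $r=1$ reduces to requiring $\cM(P)\ge 2$ for every point $P$ with $\cM(P)\ge 1$. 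Both directions of the asserted equivalence then follow at once, with no remaining case analysis.

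There is no genuine obstacle here, as the statement is a direct specialization; the only point requiring care is the dimensional bookkeeping, namely reading ``$t$ points spanning a $t$-dimensional subspace'' with the vector-space dimension convention of the paper (a point being $1$-dimensional), so that a point can never lie on the span of strictly fewer distinct points. As an independent sanity check one can re-derive the statement directly from Lemma~\ref{lemma_dual_distance}: a non-degenerate code has no weight-$1$ dual codewords, so locality $1$ forces, for each coordinate $i$, a weight-$2$ dual codeword through $i$, which geometrically is precisely a second coordinate mapping to the same point, i.e.\ $\cM(P)\ge 2$.
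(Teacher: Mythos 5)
Your proposal is correct and matches the paper's (implicit) treatment: the paper states this lemma without a separate proof, presenting it as a direct specialization of Lemma~\ref{lemma_locality_geometric} to $r=1$, which is exactly the argument you spell out. Your dimensional bookkeeping (a single point $Q\neq P$ spans a $1$-dimensional subspace that cannot contain $P$) and the sanity check via weight-$2$ dual codewords in Lemma~\ref{lemma_dual_distance} are both accurate.
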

Or in other words, a non-empty multiset of points has locality $1$ iff no point has multiplicity exactly $1$.    
\begin{lemma}
  \label{lemma_locality_2}
  Let $C$ be a linear $[n,k]_q$-code and $\cM$ be the corresponding multiset of points in $\PG(k-1,q)$ of cardinality $n$. Then, $C$ has locality $2$ iff for every point $P$ 
  with positive multiplicity $\cM(P)\ge 1$ we have $\cM(P)\ge 2$ or there exist points $Q,R$ with $|\{P,Q,R\}|=3$, $\dim(\langle P,Q,R\rangle)=2$, and $\cM(Q),\cM(R)\ge 1$. 
\end{lemma}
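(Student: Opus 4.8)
The plan is to specialize the general characterization of Lemma~\ref{lemma_locality_geometric} to the case $r=2$ and then unpack the resulting condition by distinguishing the two possible values $t\in\{1,2\}$. By Lemma~\ref{lemma_locality_geometric} with $r=2$, the code $C$ has locality $2$ iff for every point $P$ with $\cM(P)\ge 1$ we have $\cM(P)\ge 2$, or else there exist exactly $t\le 2$ points of positive multiplicity, all different from $P$, that span a $t$-dimensional subspace $S$ with $P\le S$. Here dimensions are vector-space dimensions, so that a point is a $1$-dimensional subspace and a line is a $2$-dimensional subspace, consistent with the convention fixed in Section~\ref{sec_preliminaries}. It thus remains to show that the alternative ``$t=1$ or $t=2$'' reduces exactly to the stated collinearity condition.

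First I would dispose of the case $t=1$. A single point $Q\ne P$ with positive multiplicity spans the $1$-dimensional subspace $S=\langle Q\rangle$, which as a projective subspace consists of the single point $Q$. The requirement $P\le S$ then forces $P=Q$, contradicting $Q\ne P$. Hence the case $t=1$ never occurs and may be discarded.

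For the case $t=2$, two points $Q,R$ of positive multiplicity, both different from $P$, span a $2$-dimensional subspace (a line) precisely when they are distinct, since any two distinct points span a unique line. The condition $P\le S=\langle Q,R\rangle$ then says that $P$ lies on this line, i.e.\ $P,Q,R$ are three pairwise distinct collinear points. Spelling this out gives $|\{P,Q,R\}|=3$, $\dim\langle P,Q,R\rangle=2$, and $\cM(Q),\cM(R)\ge 1$, which is exactly the condition in the statement. Conversely, any such $Q,R$ realize the $t=2$ alternative of Lemma~\ref{lemma_locality_geometric}, so both implications follow.

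Since the argument merely translates an already-established lemma, there is no genuine obstacle; the only points requiring care are keeping track of the vector-space dimension convention and checking that the degenerate case $t=1$ is vacuous, so that the general criterion collapses to the clean collinearity condition for three points.
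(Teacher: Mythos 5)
Your proof is correct and matches the paper's (implicit) intent exactly: the paper states Lemma~\ref{lemma_locality_2} without proof as an immediate specialization of Lemma~\ref{lemma_locality_geometric} to $r=2$, and your careful unpacking of the cases $t=1$ (vacuous) and $t=2$ (three distinct collinear points) is precisely the routine verification being left to the reader. No issues.
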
    
A multiset of points $\cM$ over the binary field $\F_2$ has locality $2$ iff every point with multiplicity $1$ is contained in a full line, i.e., all three points of the line have positive 
multiplicity. 
\begin{lemma}
  \label{lemma_number_dual_codewords_weight_3}
  Let $C$ be an $[n,k]_q$-code with dual minimum distance $d^\perp=3$. If the number of dual codewords of weight $3$ is less than $(q-1)\cdot n/3$, then the locality of $C$ is larger than $2$.
\end{lemma}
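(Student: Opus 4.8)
The plan is to pass to the geometric picture of Lemma~\ref{lemma_locality_2} and then run a double-counting argument. Since $d^\perp=3$, the code is projective, so the corresponding multiset $\cM$ in $\PG(k-1,q)$ has $\gamma(\cM)=1$; every support point has multiplicity exactly $1$, and $|\cM|=n$ equals the number of support points (each of which corresponds to exactly one coordinate). First I would record the correspondence between weight-$3$ dual codewords and collinear triples of support points. A dual codeword of weight $3$ is a dependency $\lambda_1 v_{i_1}+\lambda_2 v_{i_2}+\lambda_3 v_{i_3}=0$ among three columns of a generator matrix with all $\lambda_j\neq 0$. Because $d^\perp=3$ forbids weight-$2$ dual codewords, no two of these columns are proportional, so they represent three distinct points which, being linearly dependent, span a line.

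Next I would count, for a fixed collinear triple $\{P,Q,R\}$ of distinct support points, how many weight-$3$ codewords it produces. The dependencies among the three fixed column vectors form the kernel of the $k\times 3$ matrix they define; this kernel is $1$-dimensional, since any two of the three points already span the line (rank $2$). Hence the nonzero dependencies are the $q-1$ scalar multiples of a single tuple, each having all three coordinates nonzero (otherwise two of the points would coincide), and each gives a distinct weight-$3$ dual codeword supported on $\{i_1,i_2,i_3\}$. Since a weight-$3$ codeword determines its triple uniquely (as its support) and each triple arises from exactly $q-1$ codewords, the number of weight-$3$ dual codewords equals $(q-1)\cdot T$, where $T$ is the number of collinear triples of distinct support points. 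The hypothesis therefore reads $(q-1)T<(q-1)n/3$, i.e. $T<n/3$.

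Finally I would invoke Lemma~\ref{lemma_locality_2}. If $C$ had locality $2$, then, since no support point has multiplicity $\ge 2$, every support point $P$ would lie on a line together with two further support points $Q,R$; that is, every support point would be contained in at least one collinear triple. Double counting incidences between the $n$ support points and the $T$ triples, each of which contains exactly three points, gives $3T=\sum_{P}(\text{number of triples through }P)\ge n$, hence $T\ge n/3$, contradicting $T<n/3$. Thus $C$ cannot have locality $2$, so its locality is larger than $2$.

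The argument is short, and the only place demanding care is the second step: verifying that each collinear triple contributes exactly $q-1$ dual codewords — that the dependency space is $1$-dimensional and that a generating tuple has all coordinates nonzero — since this is precisely what produces the exact factor $q-1$ matching the threshold in the statement. The incidence bound $3T\ge n$ is then immediate, so I expect no further obstacle.
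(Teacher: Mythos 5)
Your proof is correct and follows the same strategy as the paper's: pass to the multiset of points, invoke Lemma~\ref{lemma_locality_2} to force every support point onto a line with at least three support points, and double count against the weight-$3$ dual codewords. The one substantive difference is the counting unit. The paper covers the $n$ support points by lines containing at least three support points, asserts that at least $n/3$ such lines are needed, and attributes $q-1$ weight-$3$ dual codewords to each line; you instead count collinear \emph{triples} $T$ of distinct support points, show the dual codeword count is exactly $(q-1)T$, and get $3T\ge n$ from the incidence count. Your version is actually the more careful one: for $q>2$ a single line can carry up to $q+1$ support points, so the $n$ points could in principle be covered by fewer than $n/3$ lines, and the paper's covering statement is literally false as written; it is rescued only because a line with $m\ge 3$ support points contributes $(q-1)\binom{m}{3}\ge (q-1)\,m/3$ weight-$3$ dual codewords, which is precisely the triple count you make explicit. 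Your verification that each collinear triple yields exactly $q-1$ full-support dependencies (one-dimensional kernel, no vanishing coordinate since the code is projective) is also the right place to spend the care. In short: same approach, executed more rigorously than in the paper.
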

\begin{proof}
  Due to Lemma~\ref{lemma_dual_distance} the locality of $C$ is at least $2$. So, due to Lemma~\ref{lemma_locality_2}, any point with positive multiplicity w.r.t.\ the corresponding 
  multiset of points $\cM$ has to be contained in a line that contains at least three points with positive multiplicity. Our $n$ points with positive multiplicity cannot be covered by 
  fewer than $n/3$ such lines. However, since each such line corresponds to $(q-1)$ dual codewords of weight $3$, we obtain a contradiction.  
\end{proof}
    
\medskip

Similar to the notion of $n_q(k,d)$ let $n_q(k,d,r)$ denote the minimal possible length of an $[n,k,d]_q$-code with locality $r$, i.e., the minimum possible cardinality of a spanning multiset of 
points in $\PG(k-1,q)$ with locality $r$. So, clearly we have
\begin{equation}
  n_q(k,d,r)\ge n_q(k,d)\ge g_q(k,d)=\sum_{i=0}^{k-1}\left\lceil\frac{d}{q^i}.\right\rceil
\end{equation}  
and $n_q(k,d,r)\le n_q(k,d,r')$ for all $r\ge r'$. In Theorem~\ref{main_thm} we will show $n_q(k,d,r)=g_q(k,d)$ for sufficiently large $d$ and fixed parameters $q$, $k$, and $r$.      
    
\begin{example}
  \label{example_simplex_code}
  Let $\cM$ be the (multi-)set of points in $\cP(k-1,q)$ where each point has multiplicity exactly $1$. Then, $\cM$ has cardinality $\tfrac{q^k-1}{q-1}=\gaussm{k}{1}{q}$ 
  and each hyperplane $H$ has multiplicity $\cM(H)=\tfrac{q^{k-1}-1}{q-1}=\gaussm{k-1}{1}{q}$, so that $|\cM|-\cM(H)=q^{k-1}$. The corresponding linear code is called $k$-dimensional $q$-ary 
  simplex code and has parameters $[n,k,d]_q=\left[\gaussm{k}{1}{q},k,q^{k-1}\right]_q$. If $k\ge 2$ then $\cM$ has locality $r=2$ since each point has multiplicity $1$ and each point is contained
  in at least one line (whose points also have multiplicity $1$ each). For each integer $t\ge 1$ the multiset of points $t\cdot \cM$ has cardinality $t\cdot\gaussm{k}{1}{q}$ and locality $1$ iff $t\ge 2$. 
  The corresponding linear code, called $t$-fold simplex code, has parameters $[n,k,d]_q=\left[t\cdot \gaussm{k}{1}{q},k,t\cdot q^{k-1}\right]_q$. Note that we have $n=g_q(k,d)$ for all $t\ge 1$. 
\end{example}  

\begin{theorem}
  \label{main_thm}
  Let $q$ be an arbitrary prime power, $k\in\mathbb{N}_{\ge 2}$, and $r\in\mathbb{N}_{\ge 1}$. If $d$ is sufficiently large, then we have $n_q(k,d,r)=g_q(k,d)$.
\end{theorem}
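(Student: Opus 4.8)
The plan is to establish the apparently stronger statement that $n_q(k,d,1)=g_q(k,d)$ for all sufficiently large $d$. This suffices for every $r$: a code of locality $1$ uses recovery sets of size $1\le r$, so it is also a code of locality $r$, whence $n_q(k,d,r)\le n_q(k,d,1)$; together with the lower bound $n_q(k,d,r)\ge g_q(k,d)$ noted above this forces $n_q(k,d,r)=g_q(k,d)$ for every fixed $r\ge 1$. By Lemma~\ref{lemma_locality_1} the task is therefore purely geometric: for large $d$ I must produce a spanning multiset $\cM$ in $\PG(k-1,q)$ with $|\cM|=g_q(k,d)$ and $\cM(H)\le|\cM|-d$ for every hyperplane $H$, in which every point of positive multiplicity has multiplicity at least $2$.

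First I would set up the construction in complementary (anticode) form $\cM=m\cdot\Sigma-\mathcal{N}$, where $\Sigma$ is the full simplex of Example~\ref{example_simplex_code} (every point with multiplicity $1$), $m=\lceil d/q^{k-1}\rceil$, and $\mathcal{N}\ge 0$ is a correction term. Writing $e:=mq^{k-1}-d\in\{0,\dots,q^{k-1}-1\}$, the requirement $|\cM|=g_q(k,d)$ pins down $|\mathcal{N}|=\sum_{i=0}^{k-1}\lfloor e/q^i\rfloor$, and since $\cM(H)=m\tfrac{q^{k-1}-1}{q-1}-\mathcal{N}(H)$ the minimum distance $|\cM|-\max_H\cM(H)$ equals $mq^{k-1}-|\mathcal{N}|+\min_H\mathcal{N}(H)$; so the distance condition becomes $\min_H\mathcal{N}(H)\ge|\mathcal{N}|-e$. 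The decisive feature is that $e$, and hence $|\mathcal{N}|$, is bounded by a constant depending only on $q$ and $k$, whereas $m\to\infty$ as $d\to\infty$.

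Next I would realize $\mathcal{N}$ explicitly by the classical Solomon--Stifler choice: expand $e=\sum_{i=0}^{k-2}e_iq^i$ in base $q$, fix a flag $V_0\subset V_1\subset\cdots\subset V_{k-2}$ with $\dim V_i=i+1$, and put $\mathcal{N}=\sum_{i=0}^{k-2}e_i\cdot\Sigma_{V_i}$, the corresponding sum of subspace-simplices. A direct count gives $|\mathcal{N}|=\sum_i e_i\tfrac{q^{i+1}-1}{q-1}=\sum_i\lfloor e/q^i\rfloor$, and for a hyperplane $H$ one has $\mathcal{N}(H)=\sum_i e_i\tfrac{q^{\dim(V_i\cap H)}-1}{q-1}$, which is minimized by any hyperplane meeting the flag transversally to the value $\sum_i e_i\tfrac{q^i-1}{q-1}=|\mathcal{N}|-e$. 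Hence the minimum distance is exactly $mq^{k-1}-e=d$ while the length is exactly $g_q(k,d)$. This optimality computation is the technical heart, and the hyperplane-minimization step is the one I expect to be the main obstacle in a self-contained write-up; it can, however, simply be cited from Solomon--Stifler \cite{solomon1965algebraically}.

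Finally I would read off the locality. Each point $P$ lies in $V_i$ only for indices $i$ above some threshold, so $\mathcal{N}(P)\le\sum_{i=0}^{k-2}e_i\le(k-1)(q-1)=:c(q,k)$, a bound independent of $d$. Therefore $\cM(P)=m-\mathcal{N}(P)\ge m-c(q,k)$ for every point, which is at least $2$ once $m\ge c(q,k)+2$, i.e.\ once $d$ exceeds an explicit threshold depending only on $q$ and $k$. Thus every point of $\cM$ has multiplicity $0$ or at least $2$, so $\cM$ has locality $1$ by Lemma~\ref{lemma_locality_1}, giving a locality-$1$ code of length $g_q(k,d)$ and completing the proof. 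The conceptual point is exactly the separation of scales: the base multiplicity $m$ grows without bound while the correction $\mathcal{N}$ stays bounded, so for large $d$ no point can end up with the forbidden multiplicity $1$.
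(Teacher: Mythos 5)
Your proof is correct, but it takes a genuinely different route from the paper at the decisive step. Both arguments reduce to locality $1$ via the monotonicity $n_q(k,d,r)\le n_q(k,d,1)$ and both lean on Solomon--Stiffler, but in different ways. The paper uses Solomon--Stiffler purely as an existence statement ($n_q(k,d)=g_q(k,d)$ for large $d$), takes an \emph{arbitrary} Griesmer-attaining $[n,k,d]_q$-code, and applies the counting inequality~(\ref{eq_point_mult_lb}) to conclude $\cM(P)\ge n(1-q)+dq\ge t-q^k$ for every point, where $t=\lfloor d/q^{k-1}\rfloor$; so for large $d$ \emph{every} length-optimal code automatically has all point multiplicities at least $2$. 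You instead rebuild the Solomon--Stiffler multiset $\cM=m\cdot\Sigma-\sum_i e_i\cdot\Sigma_{V_i}$ explicitly along a flag, verify length and minimum distance by the base-$q$ computation, and read off $\cM(P)\ge m-(k-1)(q-1)$ directly. Your computations are all correct (including the identity $|\mathcal{N}|-e=\sum_i e_i\tfrac{q^i-1}{q-1}$ and the transversal hyperplane realizing the minimum, which exists since one only needs $H$ to avoid the point $V_0$). What each approach buys: yours is self-contained and yields an explicit threshold $m\ge (k-1)(q-1)+2$ beyond which the construction works, without needing to know that $n_q(k,d)=g_q(k,d)$; the paper's is shorter and proves the strictly stronger statement that for $d$ large \emph{every} Griesmer-optimal code has locality $1$, not merely that one such code exists. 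Neither argument has a gap; if you wanted to tighten your write-up you could note explicitly that nonnegativity of $\cM$ (i.e.\ $\mathcal{N}(P)\le m$) and the spanning property both follow from the same bound $\cM(P)\ge m-(k-1)(q-1)$.
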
      
\begin{proof}
  It suffices to prove the statement for locality $r=1$.  
  Due to \cite{solomon1965algebraically} there exists a constant $d'$ (depending on $k$ and $q$) such that for all $d\ge d'$ we have $n_q(k,d)=g_k(k,d)$.\footnote{For $k\ge 3$, e.g.\ 
  $d\ge (k-2)q^{k-1}-(k-1)q^{k-2}+1$ is sufficient \cite{maruta1997achievement}.} For $d\ge d'$ consider 
  an $[n,k,d]_q$-code with $n=g_q(k,d)$ and let $\cM$ be the corresponding multiset of points in $\PG(k-1,q)$. Setting $t:=\left\lfloor d/q^{k-1}\right\rfloor$, we have $d\ge t\cdot q^{k-1}$ 
  and $n\le (t+1)\cdot \gaussm{k}{1}{q}$. Using this and Inequality~(\ref{eq_point_mult_lb}) we conclude
  $$
    \cM(P) \ge n(1-q)+dq\ge t-q^k  
  $$  
  for every point $P$. So, if $d$ is sufficiently large, then we have $t\ge q^k+2$ and $\cM$ has locality $1$ due to Lemma~\ref{lemma_locality_1}.
\end{proof}
So for every set of parameters $q$, $k$, and $r$ the determination of $n_q(k,d,r)$, as a function of $d$, is a finite problem. In principle we can determine 
the exact value of $n_q(k,d,r)$ for each given set of parameters as the optimum target value of an integer linear program (ILP). We will spell out the details 
for $r\in\{1,2\}$ and leave the general problem as an exercise for the interested reader.
\begin{proposition}
  Let $q$, $k$, and $d$ be arbitrary but fixed parameters. Then, $n_q(k,d,1)$ is given as the optimum target value of the following ILP:
  \begin{eqnarray*}
  \min n && \text{subject to} \\
  \sum_{P\in\mathcal{P}} x_P &=& n \\
  x_{\langle e_i\rangle} &\ge& 1\quad \forall 1\le i\le k\\ 
  \sum_{P\in\mathcal{P}\,:\ P\le H} x_P &\le& n-d \quad\forall H\in\mathcal{H}\\ 
  x_P &\ge& 2y_P\quad \forall P\in\mathcal{P}\\
  x_P &\le& \Lambda y_P\quad \forall P\in\mathcal{P}\\  
  x_P &\in& \mathbb{N}\quad\forall P\in \mathcal{P}\\ 
  y_P &\in& \{0,1\}\quad\forall P\in \mathcal{P},
\end{eqnarray*}
where $e_i$ denotes the $i$th unit vector, $\mathcal{P}$ denotes the set of points in $\PG(k-1,q)$, $\mathcal{H}$ denotes the set of hyperplanes in $\PG(k-1,q)$, 
and $\Lambda$ is a sufficiently large constant.
\end{proposition}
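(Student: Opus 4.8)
The plan is to show that the displayed ILP is a faithful, lossless encoding of the combinatorial problem defining $n_q(k,d,1)$, so that the optimum of the ILP equals $n_q(k,d,1)$. I would argue this by establishing a bijection (or at least a value-preserving correspondence) between feasible integer solutions of the ILP and spanning multisets of points in $\PG(k-1,q)$ that yield $[n,k,d]_q$-codes of locality $1$, under which the objective $n$ matches the cardinality of the multiset. Since both the ILP minimum and $n_q(k,d,1)$ are minima over the same set of achievable lengths, equality of the optimal values will follow.

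\medskip

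First I would interpret the variables: each $x_P$ records the multiplicity $\cM(P)$ of the point $P$, and the first constraint simply defines $n=|\cM|$ as the sum of all multiplicities, matching the objective. Next I would verify that each family of constraints captures exactly one requirement of the definition. The constraints $\sum_{P\le H} x_P\le n-d$ over all hyperplanes $H$ are precisely the condition $\cM(H)\le n-d$ for every hyperplane stated in Section~\ref{sec_preliminaries}, which is equivalent to the code having minimum distance at least $d$. The constraints $x_{\langle e_i\rangle}\ge 1$ force the $k$ coordinate points to have positive multiplicity; I would note that this guarantees the multiset is spanning (since the $\langle e_i\rangle$ form a basis), and conversely that any spanning multiset can, after a suitable change of basis realized by a projective transformation, be assumed to give positive weight to these $k$ points, so no achievable length is lost by imposing them. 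Finally, the locality-$1$ condition of Lemma~\ref{lemma_locality_1}, namely that every point of positive multiplicity has multiplicity at least $2$, is encoded by the indicator variables $y_P$: the pair $x_P\ge 2y_P$ and $x_P\le \Lambda y_P$ forces $y_P=0$ whenever $x_P=0$ and, whenever $x_P\ge 1$, forces $y_P=1$ and hence $x_P\ge 2$. Here $\Lambda$ must be chosen larger than any multiplicity that can occur in an optimal solution, which is finite because the number of points and hyperplanes in $\PG(k-1,q)$ is finite and bounded multiplicities suffice.

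\medskip

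The one step requiring genuine care is the logical equivalence enforced by the $y_P$ gadget: I must check both directions, that a feasible ILP solution yields a locality-$1$ multiset and that every locality-$1$ multiset arises from a feasible ILP solution by setting $y_P=\mathbf{1}[x_P\ge 1]$. The forward direction follows from the two inequalities as above; the reverse direction needs the remark that the chosen constant $\Lambda$ is large enough not to exclude the multiplicities occurring in an optimal locality-$1$ configuration, so I would justify that it suffices to take $\Lambda$ bounded in terms of $q,k,d$ (for instance any value exceeding the largest multiplicity attainable in a minimal-length solution). I expect the main obstacle to be exactly this bookkeeping around $\Lambda$ and the spanning condition: one must be confident that restricting to the unit points $\langle e_i\rangle$ and to a finite range of multiplicities does not cut off the true minimum. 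Once these two points are settled, the correspondence is value-preserving and the optimum of the ILP equals $n_q(k,d,1)$.
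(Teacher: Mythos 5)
Your proposal is correct and follows essentially the same route as the paper: interpret $x_P$ as the point multiplicities of a multiset $\cM$, match each constraint family to the corresponding condition (minimum distance via the hyperplane constraints, spanning via the unit points after a projective change of basis, locality $1$ via the $y_P$ gadget and Lemma~\ref{lemma_locality_1}), and check both directions of the correspondence. The only detail the paper adds beyond your sketch is an explicit admissible choice $\Lambda=\left\lceil d/q^{k-1}\right\rceil\cdot\gaussm{k}{1}{q}$, justified by the $t$-fold simplex code with $t=\left\lceil d/q^{k-1}\right\rceil$, which upper-bounds the optimal length and hence every multiplicity occurring in an optimal solution.
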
  
\begin{proof}
  For a feasible solution of the stated ILP we can define a multiset of points $\cM$ via $\cM(P)=x_P\in\mathbb{N}$ for all points $P\in\mathcal{P}$. The cardinality of 
  $\cM$ is given by $\sum_{P\in\mathcal{P}}\cM(P)=\sum_{P\in\mathcal{P}} x_P = n$. Since $\cM(H)=\sum_{P\in\mathcal{P}\,:\ P\le H} x_P \le n-d$ the corresponding linear code $C$ has Hamming distance 
  at least $d$. Since $\cM(\langle e_i\rangle)=x_{\langle e_i\rangle}\ge 1$ the multiset $\cM$ is spanning, i.e., the linear code $C$ has dimension $k$. For any point $P\in\mathcal{P}$ the constraints 
  $x_P \ge 2y_P$ and $x_P \le \Lambda y_P$ are equivalent to $x_P=0$ for $y_P=0$ and to $2\le x_P\le \Lambda$ for $y_P=1$, i.e., we have $\cM(P)\neq 1$. Thus, $\cM$ and $C$ have locality $1$. 
  
  For the other direction consider an $[n,k,q]_q$-code $C'$ and its corresponding multiset of points $\cM'$. Since $\cM'$ is spanning there exists an isomorphic multisets of points 
  $\cM$ with $\cM(\langle e_i\rangle)\ge 1$ for all $1\le i\le k$. Let $C$ denote the $[n,k,d]_q$-code corresponding to $\cM$. Setting $x_P=\cM(P)\in\mathbb{N}$ for all points $P\in \mathcal{P}$ all 
  constraints that do not involve a $y$-variable are satisfied. If $C'$ has locality $1$, so does $C$ and $\cM$. From Lemma~\ref{lemma_locality_1} we conclude $x_P=\cM(P)\neq 1$ for all $P\in\mathcal{P}$. 
  So, if $\cM(P)=0$ we can set $y_P=1$. If $2\le \cM(P)\le \Lambda$ for a sufficiently large constant $\Lambda$ we can set $y_P=1$. With this all constraints are satisfied, i.e., we have constructed
  a feasible solution of the above ILP. We remark that choosing $\Lambda=\left\lceil d/q^{k-1}\right\rceil \cdot \gaussm{k}{1}{q}\ge n$ always works considering a $t$-fold simplex code with 
  $t=\left\lceil d/q^{k-1}\right\rceil$.    
\end{proof}
We remark that we may also start with a rather small value for $\Lambda$. If we find a feasible solution with target value $n$, then we can deduce $n_q(k,d,r)\le n$. Using this $n$ we can 
utilize Inequality~(\ref{eq_point_mult_ub}) to deduce an upper bound for $\Lambda$. 

Using indicator variables $u_P\in\{0,1\}$ for $\cM(P)\ge 1$ and $z_L\in\{0,1\}$ for lines $L$ containing at least three points with positive multiplicity we can adjust the previous ILP to
model multisets of points with locality $2$. 
\begin{proposition}
  Let $q$, $k$, and $d$ be arbitrary but fixed parameters. Then, $n_q(k,d,2)$ is given as the optimum target value of the following ILP:
  \begin{eqnarray*}
  \min n && \text{subject to} \\
  \sum_{P\in\mathcal{P}} x_P &=& n \\
  x_{\langle e_i\rangle} &\ge& 1\quad \forall 1\le i\le k\\ 
  \sum_{P\in\mathcal{P}\,:\ P\le H} x_P &\le& n-d \quad\forall H\in\mathcal{H}\\
  x_P &\ge& u_P\quad \forall P\in\mathcal{P}\\
  x_P &\le& \Lambda u_P\quad \forall P\in\mathcal{P}\\ 
  x_P &\ge& 2y_P\quad \forall P\in\mathcal{P}\\
  \sum_{P\in\mathcal{P}\,:\,P\le L} u_P &\ge& 3z_L\quad\forall L\in\mathcal{L}\\ 
  y_P+\sum_{L\in\mathcal{L}\,:\, P\le L} z_L&\ge& u_P\quad\forall P\in\mathcal{P}\\
  x_P &\in& \mathbb{N}\quad\forall P\in \mathcal{P}\\ 
  y_P &\in& \{0,1\}\quad\forall P\in \mathcal{P}\\
  u_P &\in& \{0,1\}\quad\forall P\in \mathcal{P}\\
  z_L &\in& \{0,1\}\quad\forall L\in \mathcal{L},
\end{eqnarray*}
where $e_i$ denotes the $i$th unit vector, $\mathcal{P}$ denotes the set of points in $\PG(k-1,q)$, $\mathcal{L}$ denotes the set of lines in $\PG(k-1,q)$,  
$\mathcal{H}$ denotes the set of hyperplanes in $\PG(k-1,q)$, and $\Lambda$ is a sufficiently large constant.
\end{proposition}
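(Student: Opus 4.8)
The plan is to mirror the proof of the preceding proposition, adding careful bookkeeping of the auxiliary indicator variables $u_P$, $y_P$, and $z_L$ so that a feasible solution encodes precisely the geometric characterisation of locality $2$ from Lemma~\ref{lemma_locality_2}. As before, the constraints $\sum_{P\in\mathcal{P}} x_P=n$, $x_{\langle e_i\rangle}\ge 1$, and $\sum_{P\le H} x_P\le n-d$ guarantee that setting $\cM(P):=x_P$ yields a spanning multiset of points of cardinality $n$ whose corresponding code has dimension $k$ and minimum Hamming distance at least $d$. This part is verbatim the locality-$1$ argument and needs no change, so the entire content of the proof lies in the locality constraints.

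First I would treat the forward direction: given a feasible solution I claim $\cM$ has locality $2$. The pair $x_P\ge u_P$ and $x_P\le\Lambda u_P$ forces $u_P=1$ exactly when $\cM(P)\ge 1$, so $u_P$ is the indicator of positive multiplicity; likewise $x_P\ge 2y_P$ ensures that $y_P=1$ implies $\cM(P)\ge 2$, and $\sum_{P\le L}u_P\ge 3z_L$ ensures that $z_L=1$ implies that at least three distinct points of the line $L$ carry positive multiplicity. Now fix a point $P$ with $\cM(P)\ge 1$, i.e.\ $u_P=1$. The constraint $y_P+\sum_{L\,:\,P\le L}z_L\ge u_P=1$ then forces either $y_P=1$, whence $\cM(P)\ge 2$, or $z_L=1$ for some line $L$ through $P$, whence $L$ contains three distinct points $P,Q,R$ of positive multiplicity; three distinct collinear points satisfy $|\{P,Q,R\}|=3$ and $\dim\langle P,Q,R\rangle=2$. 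Either alternative is exactly one of the two cases of Lemma~\ref{lemma_locality_2}, so $\cM$ and hence $C$ has locality $2$.

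For the converse I would start from an $[n,k,d]_q$-code $C'$ of locality $2$, pass as in the previous proof to an isomorphic spanning multiset $\cM$ with $\cM(\langle e_i\rangle)\ge 1$ for all $i$, and set $x_P:=\cM(P)$. I then read off the indicators at their strongest feasible values: $u_P:=1$ iff $\cM(P)\ge 1$, $y_P:=1$ iff $\cM(P)\ge 2$, and $z_L:=1$ iff $L$ carries at least three points of positive multiplicity. All constraints free of $y$, $u$, $z$ hold as before, and the definitional constraints $x_P\ge u_P$, $x_P\le\Lambda u_P$ (with $\Lambda=\lceil d/q^{k-1}\rceil\cdot\gaussm{k}{1}{q}\ge n$, exactly as in the locality-$1$ case), $x_P\ge 2y_P$, and $\sum_{P\le L}u_P\ge 3z_L$ are immediate from these definitions. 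The only constraint using the hypothesis is $y_P+\sum_{L:P\le L}z_L\ge u_P$: for a point $P$ with $u_P=1$, Lemma~\ref{lemma_locality_2} supplies either $\cM(P)\ge 2$, giving $y_P=1$, or a line through $P$ with two further points of positive multiplicity, giving $z_L=1$ for that line, so the left-hand side is at least $1=u_P$.

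The verification is largely routine; the one point demanding care is the logical orientation of the big-$M$ linkages. Each auxiliary variable is constrained in a single direction only (for instance $y_P=1\Rightarrow\cM(P)\ge 2$ but not conversely, and $z_L=1\Rightarrow$ three positive points on $L$ but not conversely), so I must check that these one-sided implications are enough: in the forward direction they are precisely what certifies locality via Lemma~\ref{lemma_locality_2}, and in the converse the indicators are assigned their tightest consistent values so that feasibility is automatic. I expect this orientation check, together with the elementary observation that three distinct collinear points span a $2$-dimensional subspace, to be the only substantive steps.
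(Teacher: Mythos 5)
Your proof is correct and follows exactly the route the paper intends: the paper omits an explicit proof of this proposition, offering only the one-sentence remark that the locality-$1$ ILP is adjusted via the indicators $u_P$ (positive multiplicity) and $z_L$ (lines with at least three points of positive multiplicity), and your argument is precisely that adjustment of the preceding proposition's proof, with the correspondence to Lemma~\ref{lemma_locality_2} verified in both directions. No gaps; the one-sided big-$M$ orientation and the choice $\Lambda\ge n$ are handled just as in the locality-$1$ case.
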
  

\begin{lemma}
  \label{lemma_monotone}
  For each $n\ge n_q(k,d,r)$ there exists an $[n,k,d]_q$-code with locality $r$.
\end{lemma}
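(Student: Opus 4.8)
The plan is to work entirely in the geometric language of multisets of points and to show that any length-optimal code can be stretched to an arbitrary larger length by simply duplicating a single coordinate. Set $n_0:=n_q(k,d,r)$ and fix an $[n_0,k,d]_q$-code of locality $r$; by Lemma~\ref{lemma_locality_geometric} it corresponds to a spanning multiset of points $\cM$ in $\PG(k-1,q)$ with $|\cM|=n_0$, with $\cM(H)\le n_0-d$ for every hyperplane $H$, and with locality $r$. Given a target length $n\ge n_0$, I would pick any point $P_0$ with $\cM(P_0)\ge 1$ and define a new multiset $\cM'$ by $\cM'(P_0):=\cM(P_0)+(n-n_0)$ and $\cM'(P):=\cM(P)$ for all $P\neq P_0$; geometrically this just adds $n-n_0$ further copies of the single coordinate $P_0$.

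It then remains to check that $\cM'$ has cardinality $n$ and realises an $[n,k,d]_q$-code with locality $r$. The cardinality is immediate, and since the support of $\cM'$ equals that of $\cM$, the multiset $\cM'$ is still spanning and hence corresponds to a code of dimension exactly $k$. For the minimum distance I would split the hyperplanes $H$ into those with $P_0\le H$, where $\cM'(H)=\cM(H)+(n-n_0)\le (n_0-d)+(n-n_0)=n-d$, and those with $P_0\not\le H$, where $\cM'(H)=\cM(H)\le n_0-d\le n-d$. In either case $\cM'(H)\le n-d$, so the associated code has minimum Hamming distance at least $d$.

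For locality I would invoke Lemma~\ref{lemma_locality_geometric} directly. The support of $\cM'$ coincides with that of $\cM$ and no multiplicity is decreased, so for every point $P\neq P_0$ the recovery witness guaranteed for $\cM$ (either $\cM(P)\ge 2$, or the $t\le r$ positively-weighted points spanning a subspace through $P$) survives verbatim in $\cM'$. For $P_0$ itself, either $n=n_0$, in which case $\cM'=\cM$ and there is nothing to prove, or $n>n_0$, in which case $\cM'(P_0)\ge \cM(P_0)+1\ge 2$, so the first alternative of the condition holds outright. Hence $\cM'$ has locality $r$, completing the argument.

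The only step that needs genuine care is this last one: the \emph{monotonicity} of locality under increasing point multiplicities. The key observation is that every alternative in Lemma~\ref{lemma_locality_geometric} is a purely existential statement about positively-weighted points and the subspaces they span, so enlarging multiplicities can only preserve or strengthen it and can never invalidate a recovery set. The remaining verifications are direct substitutions into the hyperplane-multiplicity inequality $\cM(H)\le n-d$, and present no obstacle.
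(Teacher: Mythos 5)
Your proposal is correct and follows essentially the same route as the paper: both fix a length-optimal code, pass to its multiset of points, and add the excess $n-n_q(k,d,r)$ to the multiplicity of a single point with positive multiplicity. You simply spell out the verifications (hyperplane multiplicities, spanning, and the monotonicity of the locality condition under increased multiplicities) that the paper leaves implicit.
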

\begin{proof}
  Assume $n_q(k,d,r)<\infty$. Let $C'$ be an $[n',k,d]_q$-code with $n'=n_q(k,d,r)$ and locality $r$. Consider the corresponding multiset of points $\cM'$ in $\PG(k-1,q)$ and let $P$ by 
  an arbitrary point with positive multiplicity. Define the multiset of points $\cM$ in $\PG(k-1,q)$ by setting $\cM(Q)=\cM'(Q)$ for all points $Q\neq P$ and $\cM(P)=\cM'(P)+n-n'\ge \cM'(P)$. 
  by construction we have $|\cM|=n$ and $\cM$ has locality $r$. The linear code $C$ corresponding to $\cM$ also has minimum Hamming distance at least $d$.  
\end{proof}

The previous results are in principle sufficient to determine $n_q(k,d,r)$ for small parameters $q$, $k$, and $r$, so that we may just give tables of the obtained computational results. However,
we prefer to give general constructions and non-existence results in the next section first.

\section{Constructions and non-existence results}
\label{sec_constructions}
In this section we want to study some general constructions to upper bound $n_q(k,d,r)$. To this end we denote the $i$th unit vector by $e_i$, whenever the dimension of the ambient space is clear 
from the context, and for each subspace $S$ its characteristic function is denoted by $\chi_S$, i.e., $\chi_S(P)=1$ if $P\le S$ and $\chi_S(P)=0$ otherwise. In a few cases we can also give theoretical 
non-existence proofs for certain parameters to obtain lower bounds for $n_q(k,d,r)$. First, let us define $n'_q(k,r)$ as the minimum length $n$ of an $[n,k]_q$-code with locality $r$. Clearly, we 
have $n_q(k,d,r)\ge n_q(k,r)$. As observed earlier, the maximum possible locality is given by $r=k$ and we have $n_q(k,d,1)\ge n_q(k,d,2)\ge \dots\ge n_q(k,d,k)$.

\begin{proposition}
  \label{prop_r_1_2_without_d}
  For each integer $k\ge 1$ we have $n'_q(k,1)=2k$ and $n'_q(k,2)=\left\lceil\tfrac{3k}{2}\right\rceil$.
\end{proposition}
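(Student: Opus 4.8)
The plan is to establish matching upper and lower bounds for each locality separately. For the upper bounds I would exhibit explicit spanning multisets in $\PG(k-1,q)$. For $r=1$, assign multiplicity $2$ to each of the $k$ unit points $\langle e_i\rangle$; this spans $\F_q^k$ and, by Lemma~\ref{lemma_locality_1}, has locality $1$, with cardinality $2k$. For $r=2$ I would group the basis into $\lfloor k/2\rfloor$ pairs and, for each pair $(e_{2i-1},e_{2i})$, place the three collinear points $\langle e_{2i-1}\rangle,\langle e_{2i}\rangle,\langle e_{2i-1}+e_{2i}\rangle$ with multiplicity $1$; if $k$ is odd, add $\langle e_k\rangle$ with multiplicity $2$. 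Each such triangle certifies locality $2$ via Lemma~\ref{lemma_locality_2} (for $\langle e_{2i-1}\rangle$ take the two partners on the line $\langle e_{2i-1},e_{2i}\rangle$), the whole configuration spans, and its cardinality is $3\lfloor k/2\rfloor+2(k\bmod 2)=\lceil 3k/2\rceil$.

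For the lower bounds, consider an optimal configuration whose support consists of $s$ distinct points spanning $\F_q^k$ (so $s\ge k$), with $a$ points of multiplicity $\ge 2$ and $b=s-a$ points of multiplicity exactly $1$; its length is at least $2a+b=2s-b$. For $r=1$, Lemma~\ref{lemma_locality_1} forces $b=0$, so the length is $\ge 2s\ge 2k$, matching the construction. For $r=2$, Lemma~\ref{lemma_locality_2} places every multiplicity-$1$ point on a line carrying at least two further support points; equivalently, each such point lies in a triangle (a three-element circuit) of the linear matroid $M$ on the support. Writing $b(M)$ for the number of support points lying in some triangle, we have $b\le b(M)$, so it suffices to prove the intrinsic inequality $b(M)\le 2s-\tfrac32\operatorname{rank}(M)$: this yields length $\ge 2s-b\ge 2s-b(M)\ge\tfrac32 k$, hence $\ge\lceil 3k/2\rceil$ since the length is an integer.

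I would prove $b(M)\le 2s-\tfrac32\operatorname{rank}(M)$ by induction on $s$. Deleting a loop, a coloop, or any point not lying in a triangle leaves $b(M)$ unchanged while decreasing $s$ by one and the rank by at most one, so the inductive bound for the smaller matroid immediately implies it for $M$; thus I may assume every point lies in a triangle. Since $s$, $\operatorname{rank}(M)$, and $b(M)$ are all additive over the connected components of $M$, I may further assume $M$ is connected. A single triangle ($s=3$, rank $2$) is the base case, where the bound holds with equality.

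The hard part will be the inductive step for a connected component with more than three points: I must locate a point $z$ lying in a triangle whose deletion removes at most two points from $b(M)$ — itself and at most one partner — so that the bound $b(M)\le b(M\setminus z)+2$ makes the induction close. The intuition is that connectivity forces some triangle to meet the rest of the component in a shared point, which then survives the deletion and keeps its partners in triangles; turning this into a rigorous statement is where a matroid-connectivity (ear- or separator-type) argument is required, and this is the step I expect to demand the most care. The complementary extremal case, a disjoint union of triangles, is precisely where the inequality is tight and where the constant $3/2$ is pinned down.
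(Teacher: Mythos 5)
Your upper-bound constructions and your lower bound for $r=1$ are exactly the paper's. The issue is the $r=2$ lower bound: you reduce everything to the matroid inequality $b(M)\le 2s-\tfrac32\operatorname{rank}(M)$ and then, by your own account, leave its decisive inductive step unproven --- you say you ``must locate'' a suitable point $z$ with $b(M)\le b(M\setminus z)+2$ and that making this rigorous ``is where a matroid-connectivity (ear- or separator-type) argument is required.'' That is the entire content of the bound; everything before it (the reduction $ \text{length}\ge 2s-b$, deletion of loops/coloops/points outside triangles, restriction to connected components, the base case of one triangle) is routine bookkeeping. As it stands the proposal is a proof strategy with the hard case open, so it does not establish $n'_q(k,2)\ge\lceil 3k/2\rceil$.

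The inequality you want is true, and the paper proves the equivalent fact without any connectivity or deletion argument: list the full lines $L_1,\dots,L_m$ (those containing at least three support points) in an arbitrary order, let $S_i=\langle L_1,\dots,L_i\rangle$, and track the pair $\bigl(\dim(S_i),\cM(S_i)\bigr)$. Adding $L_i$ either leaves $S_{i-1}$ unchanged, or raises the dimension by $1$ while adding at least $2$ to the multiplicity (when $L_i$ meets $S_{i-1}$ in a point), or raises it by $2$ while adding at least $3$; in every case the count grows by at least $\tfrac32$ times the dimension increment, giving $\cM(S_m)\ge\lceil 3\dim(S_m)/2\rceil$. The remaining $k-\dim(S_m)$ dimensions must be spanned by points of multiplicity at least $2$, and the bound follows. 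I would recommend replacing your induction on $s$ by this greedy span-tracking argument: it proves exactly the statement your $b(M)$-inequality encodes, it identifies the same extremal configuration (disjoint triangles), and it avoids having to control how many triangles a single deletion can destroy --- which is precisely the point where your write-up stops.
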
  
\begin{proof}
   Let $\cM$ be a spanning multiset of points in $\PG(k-1,q)$ with locality $1$. Due to Lemma~\ref{lemma_locality_1} every point $P$ with positive multiplicity $\cM(P)\ge 1$ satisfies 
   $\cM(P)\ge 2$. Since $\cM$ is spanning, it contains at least $k$ points with positive multiplicity, so that $|\cM|\ge 2k$. An attaining example is e.g.\ given by 
   $\cM=\sum_{i=1}^k 2\cdot\chi_{\langle e_i\rangle}$.
   
   For the other case let $\cM$ be a spanning multiset of points in $\PG(k-1,q)$ with locality $2$. Due to Lemma~\ref{lemma_locality_2} every point $P$ with positive multiplicity $\cM(P)\ge 1$ 
   either has multiplicity at least $2$ or is contained in a line $L$ that contains at least three points of positive multiplicity. So, let $\left\{L_1,\dots,L_m\right\}$ be the set 
   of those lines. For each index $1\le i \le m$ let $S_i$ be the subspace spanned by the points of the lines $L_1,\dots,L_i$. To also capture the case $m=0$ let us denote the empty space by
   $S_0$. With this we have $\dim(S_0)=0$, $\cM(S_0)=0$, $\dim(S_1)=2$, and $\cM(S_1)\ge 3$. For $i\ge 2$ we show $\cM(S_i)\le \left\lceil 3\dim(S_i)/2\right\rceil$ and 
   $\dim(S_{i-1})\le\dim(S_i)\le \dim(S_{i-1})+2$ by induction. So, if $L_i$ is completely contained in $S_{i-1}$, then we have $\dim(S_i)=\dim(S_{i-1})$ and $\cM(S_i)=\cM(S_{i-1})$. If the intersection of 
   $S_{i-1}$ and $L_i$ is non-empty but $L_i$ is not completely contained in $S_{i-1}$, then $Q:=S_{i-1}\cap L_i$ is a point. So, we have $\dim(S_i)=\dim(S_{i-1})+1$ and $\cM(S_i)\ge \cM(S_{i-1})+2$. 
   If the intersection of $L_i$ and $S_{i-1}$ is empty, then we have $\dim(S_i)=\dim(S_{i-1})+2$ and $\cM(S_i)\ge \cM(S_{i-1})+3$. Now let us consider the set of points $P$ 
   with positive multiplicity that are not contained in $S_m$. By construction, none of these points is contained in a line $L$ that contains at least three points with positive multiplicity. 
   So, we have $\cM(P)\ge 2$ for each such point. Since $\cM$ is spanning we have at least $k-\dim(S_m)$ of those points and conclude
   $$
     |\cM|\ge \left\lceil\tfrac{3\dim(S_m)}{2}\right\rceil+2\left(k-\dim(S_m)\right)\ge \left\lceil\tfrac{3k}{2}\right\rceil.
   $$  
   For even $k$ an attaining example is given by the set of points
   $$
     \left\{\langle e_{2i-1}\rangle,\langle e_{2i}\rangle,\langle e_{2i-1}+e_{2i}\rangle:\, 1\le i\le k/2\right\}
   $$
   and for odd $k$ and example is given the sum of $2\cdot \chi_{\langle e_k\rangle}$ and the characteristic function of the set of points
   $$
     \left\{\langle e_{2i-1}\rangle,\langle e_{2i}\rangle,\langle e_{2i-1}+e_{2i}\rangle:\, 1\le i\le (k-1)/2\right\}.
   $$
\end{proof}
\begin{corollary}
  \label{cor_d_2_r_2}
  For each integer $k\ge 1$ we have $n_q(k,1,2)=n_q(k,2,2)=\left\lceil\tfrac{3k}{2}\right\rceil$.
\end{corollary}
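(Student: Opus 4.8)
The plan is to piggyback on Proposition~\ref{prop_r_1_2_without_d}, which already fixes the value $\lceil 3k/2\rceil$ in the distance-free regime, and simply to notice that its extremal examples already carry minimum distance $d\ge 2$. For the lower bounds I would invoke the inequality $n_q(k,d,r)\ge n'_q(k,r)$ recorded just before that proposition: specializing to $r=2$ gives at once $n_q(k,1,2)\ge n'_q(k,2)=\lceil 3k/2\rceil$ and $n_q(k,2,2)\ge n'_q(k,2)=\lceil 3k/2\rceil$. (In fact the constraint $d\ge 1$ is vacuous, every nonzero linear code having minimum distance at least $1$, so one even has $n_q(k,1,2)=n'_q(k,2)$ exactly; but only the inequality is needed.)

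The real content lies in the matching upper bounds, and here I would reuse verbatim the two extremal multisets of points exhibited in the proof of Proposition~\ref{prop_r_1_2_without_d}. Both have cardinality $\lceil 3k/2\rceil$ and locality $2$, so all that remains is to check that the associated codes have $d\ge 2$. The key structural observation is that for even $k$ the extremal example is precisely the direct sum of $k/2$ copies of the $[3,2,2]_q$ code whose columns are $e_1,e_2,e_1+e_2$ (three distinct points of the line $\langle e_1,e_2\rangle$), while for odd $k$ it is the direct sum of $(k-1)/2$ such blocks together with one copy of the $2$-fold repetition code $[2,1,2]_q$ generated by $2\cdot\chi_{\langle e_k\rangle}$.

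Both building blocks have minimum distance exactly $2$: for $[3,2,2]_q$ one checks $\max_H\cM(H)=1$ over the $q+1$ points (``hyperplanes'') of $\PG(1,q)$, whence $d=3-1=2$, and for $[2,1,2]_q$ this is immediate. Since the minimum distance of a direct sum of codes equals the minimum of the summands' minimum distances — a nonzero codeword of the sum restricts to a nonzero, hence weight-$\ge 2$, codeword on at least one block — both extremal codes satisfy $d\ge 2$. This yields $n_q(k,2,2)\le\lceil 3k/2\rceil$, and a~fortiori $n_q(k,1,2)\le\lceil 3k/2\rceil$, so combining with the lower bounds settles the corollary. I expect the only delicate point to be the passage itself, namely confirming that one may move from the distance-free optima of Proposition~\ref{prop_r_1_2_without_d} to codes with $d\ge 2$ \emph{without} enlarging the length; the direct-sum distance formula disposes of this uniformly in $k$ once the two small blocks are understood, so no genuine obstacle arises.
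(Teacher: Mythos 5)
Your proposal is correct, and it follows the paper's overall architecture exactly: the lower bound comes from Proposition~\ref{prop_r_1_2_without_d} via $n_q(k,d,2)\ge n'_q(k,2)$, and the upper bound reuses verbatim the two extremal multisets from that proposition's proof, so the only content is checking $d\ge 2$. Where you diverge is in that one verification step. The paper bounds hyperplane multiplicities directly: for each hyperplane $H$ at most $k/2-1$ (resp.\ $(k-1)/2-1$) of the lines $L_i$ can lie entirely in $H$ while the remaining ones meet $H$ in at most a point, giving $\cM(H)\le\lceil 3k/2\rceil-2$. You instead observe that, after permuting columns, the generator matrix is block-diagonal — a direct sum of $[3,2,2]_q$ blocks (plus one $[2,1,2]_q$ repetition block when $k$ is odd) — and invoke $d(C_1\oplus C_2)=\min\{d(C_1),d(C_2)\}$. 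Both arguments are valid and of comparable length; yours has the advantage of making the extremal codes' structure transparent (and of yielding $d=2$ exactly rather than merely $d\ge 2$), while the paper's hyperplane count stays entirely within the geometric language it uses throughout and does not require recognizing the direct-sum structure. No gap in either direction.
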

\begin{proof}
  Due to Proposition \ref{prop_r_1_2_without_d} it suffices to state attaining examples and indeed we will just verify that the examples from the proof of Proposition \ref{prop_r_1_2_without_d}
  have minimum Hamming distance $d=2$. 

  If $k$ is even let $\cM=\sum_{i=1}^k \chi_{\langle e_i\rangle} \,+\,\sum_{i=1}^{k/2} \chi_{\langle e_{2i-1}+e_{2i}\rangle}$, so that $|\cM|=\tfrac{3k}{2}$, $\cM$ is spanning, and has locality $r=2$. 
  For each $1\le i\le k/2$ let $L_i$ the the line spanned by the points $\langle e_{2i-1}\rangle$, $\langle e_{2i}\rangle$ , and $\langle e_{2i-1}+e_{2i}\rangle$. For each hyperplane 
  $H$ at most $k/2-1$ lines $L_i$ can be fully contained in $H$ and the others intersect in at most a point, so that $\cM(H)\le \tfrac{3k}{2}-2$. Thus, the linear code $C$ corresponding to $\cM$ 
  has minimum Hamming distance $d=2$.
  
  If $k$ is odd let $\cM=\sum_{i=1}^{k-1} \chi_{\langle e_i\rangle} \,+\,\sum_{i=1}^{(k-1)/2} \chi_{\langle e_{2i-1}+e_{2i}\rangle}\,+\,2\cdot\chi_{\langle e_k\rangle}$, so that 
  $|\cM|=\left\lceil\tfrac{3k}{2}\right\rceil=\tfrac{3k+1}{2}$, $\cM$ is spanning, and has locality $r=2$. For each $1\le i\le (k-1)/2$ let $L_i$ the the line spanned by the points 
  $\langle e_{2i-1}\rangle$, $\langle e_{2i}\rangle$ , and $\langle e_{2i-1}+e_{2i}\rangle$. By $P$ be denote the point $\langle e_k\rangle$. For each hyperplane $H$ that contains $P$ 
  at most $(k-1)/2-1$ lines $L_i$ can be fully contained in $H$ and the others intersect in at most a point, so that $\cM(H)\le \tfrac{3k+1}{2}-2$. For each hyperplane $H$ that does not 
  contain $P$ we also have $\cM(H)\le\tfrac{3k+1}{2}-2$, so that the linear code $C$ corresponding to $\cM$ has minimum Hamming distance $d=2$.
\end{proof}

\begin{proposition}
  For each integer $k\ge 2$ we have $n'_q(k,k)=k+1$.
\end{proposition}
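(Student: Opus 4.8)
The plan is to establish the two inequalities $n'_q(k,k)\ge k+1$ and $n'_q(k,k)\le k+1$ separately, working throughout with the geometric reformulation of locality from Lemma~\ref{lemma_locality_geometric}. Recall that the relevant quantity is the minimum cardinality of a \emph{spanning} multiset of points in $\PG(k-1,q)$ whose corresponding code has locality $k$.

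For the lower bound I would argue that a spanning multiset $\cM$ in $\PG(k-1,q)$ of cardinality $k$ is forced to be a basis. Indeed, spanning the ambient space requires at least $k$ points of positive multiplicity, and since the total multiplicity is only $k$, the multiset must consist of exactly $k$ distinct points, each of multiplicity $1$, and these must be linearly independent. After a change of coordinates we may take them to be $\langle e_1\rangle,\dots,\langle e_k\rangle$. For the point $P=\langle e_i\rangle$ the only other available points are $\langle e_j\rangle$ with $j\neq i$, and any subcollection of them spans a coordinate subspace contained in the hyperplane $\{x_i=0\}$, which avoids $e_i$. Hence no subspace spanned by points different from $P$ contains $P$, so by Lemma~\ref{lemma_locality_geometric} (with every multiplicity equal to $1$) the multiset fails to have locality $k$. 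This gives $n'_q(k,k)\ge k+1$.

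For the upper bound I would exhibit the multiset $\cM$ consisting of the $k+1$ points $\langle e_1\rangle,\dots,\langle e_k\rangle,\langle e_1+\dots+e_k\rangle$, each with multiplicity $1$; since $k\ge 2$ these are pairwise distinct, so $|\cM|=k+1$, and $\cM$ is spanning. It remains to verify locality $k$ via Lemma~\ref{lemma_locality_geometric}. For $P=\langle e_i\rangle$ I would use the $k$ points $\{\langle e_j\rangle: j\neq i\}\cup\{\langle e_1+\dots+e_k\rangle\}$; they are linearly independent, hence span a $k$-dimensional subspace, and the identity $e_i=(e_1+\dots+e_k)-\sum_{j\neq i}e_j$ shows $P$ lies in their span. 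For $P=\langle e_1+\dots+e_k\rangle$ the points $\langle e_1\rangle,\dots,\langle e_k\rangle$ already span the whole space and thus contain $P$. In each case the recovery set uses $t=k\le k$ points, so $\cM$ has locality $k$ and $n'_q(k,k)\le k+1$.

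I do not expect a genuine obstacle: both bounds are elementary once the forced structure of a cardinality-$k$ spanning multiset is noted. The only point needing a little care is the lower bound, whose force rests on the observation that deleting any single point of a bare basis leaves the remaining $k-1$ points spanning a hyperplane that misses it, so that even the generous budget of locality $k$ is never usable. Combining the two inequalities yields $n'_q(k,k)=k+1$.
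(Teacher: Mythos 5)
Your proof is correct and follows essentially the same route as the paper: the lower bound by observing that a spanning multiset of cardinality $k$ is forced to be a basis, which has no finite locality, and the upper bound via the projective base $\{\langle e_1\rangle,\dots,\langle e_k\rangle,\langle e_1+\dots+e_k\rangle\}$. Your write-up is in fact slightly more careful than the paper's, which leaves the failure of locality for a bare basis and the verification for the frame as easy checks (and contains an apparent typo, "does have a finite locality" where "does not" is meant).
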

\begin{proof}
  Let $\cM$ be a spanning multiset of points in $\PG(k-1,q)$ with locality $k$. Since $\cM$ is spanning we have $|\cM|\ge k$. Up to symmetry the unique spanning multiset of points in $\PG(k-1,q)$ 
  of cardinality $k$ is given by the set of points $\left\{\langle e_i\rangle\,:\, 1\le i\le k\right\}$ which does have a finite locality. Thus, we have $n'_q(k,k)\ge k+1$. An attaining example 
  is given by the set of points
  $$
    \left\{\left\langle e_i\right\rangle\,:\, 1\le i\le k\right\}\cup\left\{\left\langle\sum_{i=1}^k e_i\right\rangle\right\}.
  $$
  Here we can easily check that each subset of $k$ points spans the entire ambient space.     
\end{proof}
We remark that the point set of our construction is called \emph{projective base} or \emph{frame} in the literature. Due to Proposition~\ref{prop_r_1_2_without_d} we also have 
$n'_q(k,k)=k+1$ for $k=1$ and we may also consider the double-point $2\cdot \chi_{\langle e_1\rangle}$ as a degenerated cases of a projective base.   

For small dimensions $k\le 2$ the determination of $n_q(k,d,r)$ can be resolved completely analytically:
\begin{proposition}
  We have $n_q(1,d,r)=\max\{2,d\}$, $n_q(2,d,1)=2\left\lceil\tfrac{d}{2}\right\rceil+2$ if $d<2q$, $n_q(2,d,1)=d+\left\lceil\frac{d}{q}\right\rceil$ if $d\ge 2q$, and 
  $n_q(2,d,r)=d+\left\lceil\frac{d}{q}\right\rceil$ for $r\ge 2$.
\end{proposition}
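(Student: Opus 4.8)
The plan is to treat $k=1$ and $k=2$ separately, and for $k=2$ to use that the hyperplanes of $\PG(1,q)$ are precisely its points, so that for a spanning multiset $\cM$ of cardinality $n$ the minimum distance of the associated code equals $n-\gamma(\cM)$, where $\gamma(\cM)$ is the largest point multiplicity. For $k=1$ the space $\PG(0,q)$ consists of a single point $P$, so $\cM=\cM(P)\cdot\chi_{\langle e_1\rangle}$ and $n=\cM(P)$; every nonzero codeword has full weight (all generator coordinates are nonzero), whence the distance is exactly $n$ and $d(C)\ge d$ reads $n\ge d$. Since $P$ admits no second point with which to build a recovery set, Lemma~\ref{lemma_locality_geometric} forces $\cM(P)\ge 2$ for every locality $r\ge 1$, i.e.\ $n\ge 2$. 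Taking $\cM(P)=\max\{2,d\}$ meets both constraints and yields $n_q(1,d,r)=\max\{2,d\}$, independently of $r$.

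For $k=2$ I would first record a universal lower bound that disregards locality. As $\cM$ is supported on at most $q+1$ points, $\gamma(\cM)\ge n/(q+1)$, so $d\le n-\gamma(\cM)\le nq/(q+1)$ and hence $n\ge\lceil d(q+1)/q\rceil=d+\lceil d/q\rceil$. For $r\ge 2$ (equivalently $r=2$, since $r\le k$) I would match it: put multiplicity $\lceil d/q\rceil$ on one point and spread the remaining $d$ over the other $q$ points with each multiplicity at most $\lceil d/q\rceil$. Whenever $d\ge 2$ this occupies at least three points, so Lemma~\ref{lemma_locality_2} gives locality $2$, the distance is $n-\lceil d/q\rceil=d$, and $n=d+\lceil d/q\rceil$. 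The sole exception $d=1$, where locality $2$ still demands a third point and the answer is $3$, is exactly Corollary~\ref{cor_d_2_r_2}.

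For $r=1$, Lemma~\ref{lemma_locality_1} requires every occupied point to have multiplicity at least $2$. When $d<2q$ I would take $\lceil d/2\rceil+1$ points each of multiplicity $2$; this is admissible since $\lceil d/2\rceil\le q$, it has locality $1$, distance $2\lceil d/2\rceil\ge d$, and length $2\lceil d/2\rceil+2$. For the matching lower bound I would split on $\gamma(\cM)$: if $\gamma(\cM)=2$ then all $s$ occupied points have multiplicity $2$ and $2(s-1)\ge d$ forces $s\ge\lceil d/2\rceil+1$, so $n=2s\ge 2\lceil d/2\rceil+2$; if $\gamma(\cM)\ge 3$ then $n=\gamma(\cM)+(n-\gamma(\cM))\ge 3+d\ge 2\lceil d/2\rceil+2$, using $2\lceil d/2\rceil\le d+1$. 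When $d\ge 2q$ the all-multiplicity-$2$ construction would need more than $q+1$ points, so I would instead place multiplicity $\lceil d/q\rceil$ (at least $2$ since $d\ge 2q$) on one point and choose $q$ further multiplicities in $[2,\lceil d/q\rceil]$ summing to $d$ (possible since $2q\le d\le q\lceil d/q\rceil$); this has locality $1$, length $d+\lceil d/q\rceil$, and already meets the universal lower bound.

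The step I expect to be the main obstacle is the locality-$1$ lower bound for $d<2q$: the dichotomy on $\gamma(\cM)$ has to be executed carefully, and one must verify that the threshold $d=2q$ separates the two branches cleanly (both give $2q+2$ there) while respecting the hard cap of $q+1$ points in $\PG(1,q)$, which is exactly what makes the all-multiplicity-$2$ construction infeasible and triggers the change of formula once $d\ge 2q$.
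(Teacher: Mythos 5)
Your proof is correct and follows essentially the same route as the paper: reduce everything to multisets of points, note that for $k=2$ the hyperplanes are the points so that $d=n-\gamma(\cM)$, get the lower bound $n\ge d+\lceil d/q\rceil$ from the Griesmer/averaging argument, and match it with explicit multiplicity distributions, with the parity argument on $\gamma(\cM)$ handling the extra $+2$ (resp.\ $+3$) in the locality-$1$ regime $d<2q$. One remark in your favour: you correctly flag that the formula $n_q(2,d,r)=d+\lceil d/q\rceil$ for $r\ge 2$ fails at $d=1$, where the answer is $3$ (consistent with Corollary~\ref{cor_d_2_r_2} and Proposition~\ref{prop_r_1_2_without_d}), since the unique $[2,2,1]_q$-code is all of $\F_q^2$, whose dual is trivial and hence has no finite locality. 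The paper's own proof instead offers the multiset $2\cdot\chi_P$ for this case, but that multiset is not spanning in $\PG(1,q)$ and so does not yield a code of dimension $2$; your treatment of this corner case is the more careful one, and the statement as printed should really exclude $d=1$ from the $r\ge 2$ clause. The only other (cosmetic) difference is that the paper parametrizes its constructions as $a\cdot\chi_L$ plus $b+1$ extra points via $d=aq+b$, whereas you distribute multiplicities directly; both verifications are routine and equivalent.
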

\begin{proof}
  In $\PG(1-1,q)$ the unique multiset of points with cardinality $n$ is given by $\cM=n\cdot \chi_{\langle e_1\rangle}$ which does not have a finite locality if $n=1$ and has locality $1$ if $n\ge 2$. 
  Since there are no hyperplanes in $\PG(1-1,q)$ we need to observe that the non-zero weights of the codewords of the corresponding linear code $C$ all are equal to $n$. Thus, we conclude 
  $n_q(1,d,r)=\max\{2,d\}$.
  
  Let $\cM$ be a spanning multiset of points of cardinality $n$ in $\PG(2-1,q)$ and $C$ its corresponding linear code $[n,k]_q$-code. The minimum Hamming distance of $C$ is at least $d$ iff we 
  have $\cM(P)\le n-d$ for every point $P$. Let us uniquely write $d=aq+b$ with $a\in\mathbb{N}$ and $b\in\{0,1,\dots,q-1\}$. With this we have
  \begin{equation}
    n\ge n_q(k,d,r)\ge n_q(k,d)\ge g_q(k,d)=\sum_{i=0}^{k-1} \left\lceil\frac{d}{q^i}\right\rceil=d+\left\lceil\frac{d}{q}\right\rceil=a(q+1)+b+\left\lceil\tfrac{b}{q}\right\rceil.
  \end{equation}   
  Let $L$ denote the ambient space, which is a line in our situation. If $b=0$ and $a\ge 1$, then $a\cdot \chi_L$ attains this bound and has locality $2$ (or $1$ if $a\ge 2$). If $b\ge 1$ and $a\ge 0$, 
  then the multiset of points given by the sum of $a\cdot\chi_L$ and the characteristic function of arbitrary $b+1\le q$ different points on $L$ attains this bound and has locality $2$ if $a\ge 1$ or $b\ge 2$; 
  we even have locality $1$ if $a\ge 2$. So, for locality $r=2$ it remains to consider the case $d=1$ where we can consider the multiset of points $2\cdot\chi_P$ for an arbitrary point $P$. Thus, 
  for $r\ge 2$ we have $n_q(2,d,r)=d+\left\lceil\frac{d}{q}\right\rceil$ and $n_q(2,d,1)=d+\left\lceil\frac{d}{q}\right\rceil$ if $d\ge 2q$.
  
  For locality $r=1$, dimension $k=2$, and $1\le d<2q$ each point $P$ in a multiset $\cM$ with these parameters satisfying $\cM(P)\ge 1$ indeed has to satisfy $\cM(P)\ge 2$. If we have $l\le 2$ points 
  with positive multiplicity, then we have $n\ge 2l +\cM(P)-2\ge 2l$ and $d\le n-\cM(P)\le 2l-2$ for every point $P$ with positive multiplicity. Thus, for even $2\le d< 2q$ we have 
  $n_q(2,d,1)=d+2$ and for odd $1\le d< 2q$ we have $n_q(2,d,1)=d+3$.  
\end{proof}
For $k\ge 3$ we remark that even the determination of $n_q(k,d)$ is a long-standing open problem for $q>9$, so that we do not expect a closed-form solution for $n_q(k,d,r)$ when $k\ge 3$.

A well-known construction for distance-optimal linear codes is due to Solomon and Stiffler \cite{solomon1965algebraically}.
\begin{lemma}
  \label{lemma_construction_solomon_stiffler}
  Let $k\ge 2$, 
  $$
    n=\sigma[k]_q-\sum_{i=0}^{k-2}\varepsilon_i\gaussm{i+1}{1}{q},
  $$
  and 
  $$
    n-d=\sigma[k-1]_q-\sum_{i=1}^{k-2}\varepsilon_i\gaussm{i}{1}{q},
  $$
  where $\sigma\in\N$ and $\varepsilon_i\in\N$ for all $0\le i\le k-2$. 
  If there exist subspaces $S_1,\dots,S_l$ in $\PG(k-1,q)$ such that 
  \begin{equation}
    \label{eq_S_j_dim_sum}
    \#\left\{1\le j\le l\,:\,\dim\!\left(S_j\right)=i\right\}=\varepsilon_{i+1}
  \end{equation}   
  for $1\le i\le k-1$ and 
  \begin{equation}
    \label{ie_S_j_sum_point_multiplicity_restriction}
    \#\left\{1\le j\le l\,:\,P\in S_j\right\}\le \sigma  
  \end{equation}  
  for each point $P$ in $\PG(k-1,q)$, then an $[n,k,d]_q$-code exists.  
\end{lemma}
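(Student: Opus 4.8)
The plan is to realize the desired code geometrically, following the anticode idea behind the Solomon--Stiffler construction: start from the $\sigma$-fold simplex (every point of $\PG(k-1,q)$ with multiplicity $\sigma$) and subtract the characteristic functions of the prescribed subspaces. Concretely, I would set
$$
  \cM \;=\; \sigma\cdot\mathbf{1}\;-\;\sum_{j=1}^{l}\chi_{S_j},
$$
where $\mathbf{1}$ denotes the multiset assigning multiplicity $1$ to every point, so that $\cM(P)=\sigma-\#\{1\le j\le l:P\in S_j\}$ for each point $P$. The first thing to verify is that $\cM$ is a genuine multiset of points, i.e.\ $\cM(P)\ge 0$ for all $P$; this is exactly the content of the point-multiplicity hypothesis~(\ref{ie_S_j_sum_point_multiplicity_restriction}).

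Next I would compute the cardinality. The $\sigma$-fold simplex has cardinality $\sigma\gaussm{k}{1}{q}$, and a subspace $S_j$ of projective dimension $p$ contains $\gaussm{p+1}{1}{q}$ points; grouping the $S_j$ by their dimensions as prescribed by~(\ref{eq_S_j_dim_sum}) gives $\sum_{j=1}^{l}\left|S_j\right|=\sum_{i=0}^{k-2}\varepsilon_i\gaussm{i+1}{1}{q}$, whence $|\cM|=n$ as claimed. Here the dimension-$0$ subspaces (single points) each reduce the total by one point.

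The heart of the argument is the minimum-distance bound via the hyperplane criterion: the associated code has minimum Hamming distance at least $d$ iff $\cM(H)\le n-d$ for every hyperplane $H$. Since every hyperplane has multiplicity $\sigma\gaussm{k-1}{1}{q}$ in the $\sigma$-fold simplex, we get $\cM(H)=\sigma\gaussm{k-1}{1}{q}-\sum_{j=1}^{l}\left|S_j\cap H\right|$. The key geometric fact is that for a subspace $S_j$ of projective dimension $p$ and a hyperplane $H$ one always has $\left|S_j\cap H\right|\ge\gaussm{p}{1}{q}$, because the intersection drops projective dimension by at most one (with equality precisely when $S_j\not\subseteq H$). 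Summing this inequality over $j$ and regrouping by dimension via~(\ref{eq_S_j_dim_sum}) yields $\sum_{j=1}^{l}\left|S_j\cap H\right|\ge\sum_{i=1}^{k-2}\varepsilon_i\gaussm{i}{1}{q}$, where the dimension-$0$ terms drop out since $\gaussm{0}{1}{q}=0$; this gives $\cM(H)\le\sigma\gaussm{k-1}{1}{q}-\sum_{i=1}^{k-2}\varepsilon_i\gaussm{i}{1}{q}=n-d$ for every hyperplane $H$, as required.

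Finally I would confirm that the code has dimension exactly $k$, i.e.\ that $\cM$ is spanning: if it were not, all points of positive multiplicity would lie in a single hyperplane $H_0$, forcing $\cM(H_0)=n>n-d$ and contradicting the distance bound as soon as $d\ge 1$. I expect the only genuinely delicate point to be the dimension bookkeeping: correctly tracking the passage between vector and projective dimension in~(\ref{eq_S_j_dim_sum}), and checking that one and the same grouping of the $S_j$ by dimension simultaneously produces the stated cardinality $n$ and the stated hyperplane value $n-d$. Everything else reduces to the hyperplane criterion together with the subspace--hyperplane intersection inequality.
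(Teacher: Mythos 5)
Your proof is correct and follows exactly the construction the paper itself indicates immediately after the lemma, namely $\cM=\sigma\cdot \chi_V-\sum_{j=1}^l \chi_{S_j}$; the paper delegates the verification to the literature, and your nonnegativity check, point count, hyperplane-intersection bound $\left|S_j\cap H\right|\ge\gaussm{p}{1}{q}$, and spanning argument supply it correctly. One remark on the ``delicate bookkeeping'' you flagged: your grouping (with $\varepsilon_i$ counting the subspaces having $\gaussm{i+1}{1}{q}$ points) is the one that makes both displayed formulas come out right, and it implicitly corrects an index slip in condition~(\ref{eq_S_j_dim_sum}), where $\varepsilon_{i+1}$ should read $\varepsilon_{i-1}$ when $\dim$ denotes vector dimension.
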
    
In terms of a multiset of points the underlying construction is given by $\cM=\sigma\cdot \chi_V-\sum\limits_{j=1}^l \chi_{S_j}$, where $V$ denotes the ambient space $\PG(k-1,q)$. In the 
literature mostly the case $n=g_k(k,d)$ is considered, while the construction works of course in general. There are also many criteria available in the literature when those subspaces $S_j$ exist 
given the other numerical parameters. Here we will just speak of a \emph{Solomon-Stifler construction of type} $\left[\sigma;\varepsilon_{k-2},\dots,\varepsilon_1,\varepsilon_0\right]$ and will mostly 
leave the existence proof for subspaces satisfying the conditions of Lemma~\ref{lemma_construction_solomon_stiffler} to the reader.

\begin{example}
  Let $\cM$ be the multiset of points in $\PG(k-1,q)$, where $k\ge 2$, obtained from the Solomon-Stifler construction of type $[1;1,0,\dots,0]$ and $C$ its corresponding $[n,k,d]_q$-code. Then, we 
  have $n=\gaussm{k}{1}{q}-\gaussm{k-1}{1}{q}=q^{k-1}$, $d=(q-1)q^{k-2}$, and $n=g_q(k,d)$. Note that the maximum point multiplicity of $\cM$ is $1$ and that very line $L$ that contains two 
  points of positive multiplicity has multiplicity $\cM(L)=q$ since $L$ intersects the subspace $S_i$ in exactly a point. Thus, we have $n_q(k,(q-1)q^{k-2},2)=q^{k-1}$ for all $q\ge 3$.
\end{example}

The just considered set of points is also known under the term \emph{affine subspace}. For the binary case $q=2$, where our construction does not give a multiset of points with locality $2$, we 
can use the well-known result that each code with the above parameters can be obtained from the Solomon-Stifler construction of type $[1;1,0,\dots,0]$, see e.g.\ \cite[Lemma 12]{kurz2021pir}, to 
conclude:
\begin{lemma}
  \label{lemma_reed_muller_parameters}
  For each $k\ge 2$ we have $n_2(k,2^{k-2},2)\ge 2^{k-1}+1$.
\end{lemma}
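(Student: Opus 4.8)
The plan is to combine two facts: the Griesmer bound already pins down the minimal length of a code with these parameters \emph{without} any locality constraint, and the essentially unique code attaining that bound turns out to fail locality~$2$. So there is nothing to optimize over — I only have to rule out a single code.

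First I would evaluate the Griesmer bound for these parameters. Since $\lceil 2^{k-2}/2^i\rceil = 2^{k-2-i}$ for $0\le i\le k-2$ while $\lceil 2^{k-2}/2^{k-1}\rceil = 1$, the bound gives $g_2(k,2^{k-2}) = \big(\sum_{i=0}^{k-2} 2^{k-2-i}\big) + 1 = 2^{k-1}$. The binary affine subspace, i.e.\ the Solomon--Stiffler construction of type $[1;1,0,\dots,0]$ with multiset $\cM = \chi_V - \chi_H$ for a hyperplane $H$, is an $[2^{k-1},k,2^{k-2}]_2$-code, so it attains this bound and $n_2(k,2^{k-2}) = 2^{k-1}$. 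In particular, by Inequality~(\ref{ie_upper_bound_griesmer}) no $[n,k,2^{k-2}]_2$-code with $n < 2^{k-1}$ exists at all, so the only length I have to exclude for locality $2$ is exactly $n = 2^{k-1}$.

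Next I would invoke the uniqueness statement recalled just before the lemma (\cite[Lemma 12]{kurz2021pir}): every $[2^{k-1},k,2^{k-2}]_2$-code arises, up to equivalence, from that same construction, so its multiset of points is (a relabelling of) $\cM = \chi_V - \chi_H$. Because locality is a property of the multiset of points alone — it does not depend on the chosen generator matrix and is preserved by the coordinate permutations that realize binary code equivalence — it suffices, by Lemma~\ref{lemma_locality_geometric}, to show that this one representative does not have locality $2$. Finally I would verify exactly that: every point of $\cM$ has multiplicity $1$, so by the binary form of Lemma~\ref{lemma_locality_2} each would have to lie on a full line, a line all three of whose points have positive multiplicity. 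But over $\F_2$ every line has exactly three points and meets the hyperplane $H$ either in all three (a line inside $H$) or in precisely one; hence any line through an affine point of $\cM$ also contains a point of $H$ and so carries at most two points of positive multiplicity. No full line meets the affine set, locality $2$ fails, and therefore no $[2^{k-1},k,2^{k-2}]_2$-code has locality $2$. Together with the first step this forces $n_2(k,2^{k-2},2)\ge 2^{k-1}+1$.

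I expect the only genuinely substantive ingredient to be the cited uniqueness of the distance-optimal binary code, which I treat as a black box; the geometric observation that affine lines over $\F_2$ contain only two points is elementary, and the Griesmer evaluation is routine. The one place demanding a little care is the transfer of ``no locality $2$'' from a single representative to all equivalent codes, which is justified by the coordinate-invariance of locality noted after Lemma~\ref{lemma_locality_geometric}.
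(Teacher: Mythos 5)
Your proposal is correct and follows essentially the same route as the paper: the paper's (implicit) argument is precisely that the Griesmer bound leaves only length $2^{k-1}$ to exclude, that \cite[Lemma 12]{kurz2021pir} forces any such code to be the affine subspace $\chi_V-\chi_H$, and that over $\F_2$ this multiset contains no full line, so by Lemma~\ref{lemma_locality_2} it cannot have locality $2$. You have simply written out the details the paper leaves to the reader, and all of them check out.
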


\begin{lemma}
  \label{lemma_line_construction}
  Let $C'$ be a projective $[n',k',d']_2$-code with $k'\ge 2$. Then, we have $n_2(k'+1,\min\!\left\{2d',n'+1\right\},2)\le 2n'+1$. 
\end{lemma}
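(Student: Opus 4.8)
The plan is to build the code geometrically as a cone-type doubling of the point set of $C'$, raised by one dimension. Let $\cM'$ denote the set of $n'$ points in $\PG(k'-1,2)$ associated with the projective code $C'$; since $C'$ has dimension $k'$, these points span $\PG(k'-1,2)$. I would embed $\PG(k'-1,2)$ as the hyperplane $H_0=\{x_0=0\}$ of $\PG(k',2)$ (with coordinates $x_0,x_1,\dots,x_{k'}$) and fix an apex $P_0=\langle(1,0,\dots,0)\rangle\notin H_0$. For a point $Q=\langle(0,q_1,\dots,q_{k'})\rangle$ of $\cM'$, the binary line $\langle P_0,Q\rangle$ consists of exactly the three points $P_0$, $Q$, and its reflection $Q':=\langle(1,q_1,\dots,q_{k'})\rangle$. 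I then set $\cM:=\cM'+\sum_{Q\in\cM'}\chi_{Q'}+\chi_{P_0}$, i.e.\ the original points of $\cM'$, their reflections, and the apex.

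First I would verify the structural parameters. The reflections all lie in $\{x_0=1\}$, so they are disjoint from $\cM'\subseteq H_0$, and $P_0$ is distinct from all of them; as $C'$ is projective the map $Q\mapsto Q'$ keeps the reflections pairwise distinct, whence $|\cM|=2n'+1$. Since $\cM'$ spans $H_0$ and $P_0\notin H_0$, the multiset $\cM$ spans $\PG(k',2)$ and the associated code has dimension $k'+1$. For locality $2$ I would invoke the binary criterion following Lemma~\ref{lemma_locality_2}: every point of $\cM$ lies on the full line $\{P_0,Q,Q'\}$ for a suitable $Q\in\cM'$ (for the apex any $Q$ works, and one exists because $k'\ge2$), so no point of multiplicity $1$ violates the locality-$2$ condition.

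The main work is the minimum-distance bound, i.e.\ controlling $\cM(H)$ over all hyperplanes $H\colon a_0x_0+\dots+a_{k'}x_{k'}=0$ of $\PG(k',2)$. For $H\neq H_0$ the trace $H'=H\cap H_0$ is a genuine hyperplane of $\PG(k'-1,2)$, so $\cM'(H')\le n'-d'$. A short count over $\F_2$, using that $Q'\in H$ iff $a_0+\sum_{i\ge1}a_iq_i=0$, then splits into three cases: $\cM(H_0)=n'$; if $P_0\in H$ (that is $a_0=0$ and $H\neq H_0$) then $\cM(H)=2\cM'(H')+1\le 2(n'-d')+1$; and if $P_0\notin H$ (that is $a_0=1$) the reflections fall complementarily, giving $\cM(H)=\cM'(H')+(n'-\cM'(H'))=n'$. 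Hence $\cM(H)\le\max\{n',\,2n'+1-2d'\}$ for every $H$, so the code has minimum distance
$$
  (2n'+1)-\max\{n',\,2n'+1-2d'\}=\min\{n'+1,\,2d'\}.
$$
This exhibits a $[2n'+1,k'+1,\min\{2d',n'+1\}]_2$-code with locality $2$, giving exactly $n_2(k'+1,\min\{2d',n'+1\},2)\le 2n'+1$. The only delicate point is keeping the three hyperplane cases and the reflection bookkeeping straight; everything else is routine.
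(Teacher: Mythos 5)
Your proof is correct and is essentially the paper's own argument: both take the cone over the embedded point set of $C'$ with an apex outside the hyperplane, count $2n'+1$ points, and split the hyperplane analysis into the cases $P_0\in H$ (giving $2\cM'(H\cap H_0)+1\le 2(n'-d')+1$) and $P_0\notin H$ (giving exactly $n'$). Your explicit coordinate bookkeeping with the reflections $Q\mapsto Q'$ is just a more concrete rendering of the same construction.
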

\begin{proof}
  Let $\cM'$ be a spanning multiset of points in $\PG(k'-1,2)$ corresponding to $C'$, so that $\cM'$ has maximum point multiplicity $1$. For each hyperplane $H'$ of $\PG(k'-1,2)$ we have 
  $\cM'(H')\le n'-d'$. Let $S$ be a $k'$-dimensional subspace in $\PG(k-1,2)$, where $k=k'+1$, and $\mathcal{P}$ be the set of points in $\PG(k-1,2)$ that arise from an embedding of $\cM'$ in 
  $S$. Choose an arbitrary point $P$ outside of $S$ and let $\mathcal{L}$ be the set of lines spanned by $P$ and each element of $\mathcal{P}$, so that $|\mathcal{L}|=n'$. With this, we define 
  the multiset of points $\cM$ in $\PG(k,2)$ as the characteristic function of the points contained in at least one line in $\mathcal{L}$, so that $|\mathcal{M}|=2n'+1$. For each hyperplane 
  $H$ of $\PG(k-1,2)$ that contains $P$ we have $\cM(H)\le 1+2(n'-d')$ since at most $n'-d'$ lines of $\mathcal{L}$ can be fully contained in $H$. (The other lines intersect the hyperplane
  $H$ just in point $P$.) For every other hyperplane $H$ in $\PG(k-1,2)$, i.e.\ $P$ is not contained in $H$, we have $\cM(H)= n'$ since any line in $\mathcal{L}$ intersects $H$ in precisely 
  a point. Thus, we have $\cM(H)\le \max\!\left\{1+2(n'-d'),n'\right\}$. So, for the linear code $C$ corresponding to $\cM$ is an $[n,k,d]_2$-code with $n=2n'+1$, $k=k'+1$ and
  $d=\min\!\left\{2d',n'+1\right\}$. By construction, every point in $\PG(k-1,2)$ with positive multiplicity w.r.t.\ $\cM$ lies on a line consisting of three points with positive multiplicity each. 
  Thus, $\cM$ as well as $C$ have locality $2$ and we can deduce the stated upper bound. 
\end{proof}

\begin{proposition}
  \label{prop_reed_muller_parameters_exact}
  For each $k\ge 2$ we have $m_2(k,2^{k-2},2)=2^{k-1}+1$.
\end{proposition}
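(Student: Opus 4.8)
The plan is to combine the lower bound that is already in hand with a matching construction. Lemma~\ref{lemma_reed_muller_parameters} supplies $n_2(k,2^{k-2},2)\ge 2^{k-1}+1$ for every $k\ge 2$, so the whole task reduces to exhibiting, for each such $k$, a binary code of length $2^{k-1}+1$, dimension $k$, locality $2$, and minimum Hamming distance at least $2^{k-2}$.

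I would handle the base case $k=2$ separately: there the claim reads $n_2(2,1,2)=3$, which is exactly $n_q(2,1,2)=\lceil 3\cdot 2/2\rceil=3$ from Corollary~\ref{cor_d_2_r_2}. For $k\ge 3$ the idea is to feed the right projective seed code into Lemma~\ref{lemma_line_construction}. I would set $k'=k-1\ge 2$ and take $C'$ to be the affine subspace code in $\PG(k'-1,2)$, namely the one whose point multiset is the complement of a hyperplane (the construction discussed in the Example following Lemma~\ref{lemma_construction_solomon_stiffler}). This $C'$ has parameters $[n',k',d']_2=[2^{k'-1},k',2^{k'-2}]_2=[2^{k-2},\,k-1,\,2^{k-3}]_2$, it is spanning of full dimension $k'$, and it is projective since each of its points carries multiplicity $1$; hence Lemma~\ref{lemma_line_construction} applies.

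Plugging these values into Lemma~\ref{lemma_line_construction} produces a code of length $2n'+1=2^{k-1}+1$, dimension $k'+1=k$, locality $2$, and minimum distance $\min\{2d',\,n'+1\}=\min\{2^{k-2},\,2^{k-2}+1\}=2^{k-2}$. This yields $n_2(k,2^{k-2},2)\le 2^{k-1}+1$, and together with the lower bound from Lemma~\ref{lemma_reed_muller_parameters} gives the claimed equality.

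The one point that requires care is the arithmetic behind $\min\{2d',n'+1\}$: the construction must be tuned so that the doubling term $2d'=2^{k-2}$ hits the target distance exactly while the competing term $n'+1=2^{k-2}+1$ stays just above it, so that the resulting distance is not cut down below $2^{k-2}$. I do not expect any real obstacle beyond verifying that the affine subspace code is genuinely projective (so that the hypothesis of Lemma~\ref{lemma_line_construction} is met) and that its parameters are as stated; both facts are standard and already recorded in the excerpt.
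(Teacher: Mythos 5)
Your proposal is correct and follows essentially the same route as the paper: the lower bound from Lemma~\ref{lemma_reed_muller_parameters}, a separate treatment of $k=2$ (the paper uses the characteristic function of a line, which is the same object underlying the $k=2$ case of Corollary~\ref{cor_d_2_r_2}), and for $k\ge 3$ an application of Lemma~\ref{lemma_line_construction} to the first-order Reed--Muller/affine-subspace code with parameters $\left(2^{k-2},k-1,2^{k-3}\right)$. The arithmetic $\min\{2d',n'+1\}=2^{k-2}$ and the projectivity check are exactly the points the paper's proof relies on as well.
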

\begin{proof}
  Due to Lemma~\ref{lemma_reed_muller_parameters} it suffices give a construction showing $m_2(k,2^{k-2},2)\le 2^{k-1}+1$. For $k=2$ such an example is given by the characteristic function 
  of a line. 
  For $k\ge 3$ we apply Lemma~\ref{lemma_line_construction} with the  first order Reed-Muller code with parameters $(n',k',d')=\left(2^{k-2},k-1,2^{k-3}\right)$ (which corresponds to the 
  characteristic function of an affine space in geometrical terms).
\end{proof}

Next we want to give an easy sufficient criterion when a code obtained from the Solomon-Stifler construction has locality $2$:
\begin{lemma}
  \label{lemma_solomon_stifler_r_2}
  Let $\cM$ be a spanning multiset of points in $\PG(k-1,q)$ obtained from the Solomon-Stifler construction with type $\left[\sigma;\varepsilon_{k-2},\dots,\varepsilon_1,\varepsilon_0\right]$. 
  If $\sum_{i=0}^{k-1} \varepsilon_i\cdot\gaussm{i+1}{1}{q}< \sigma\cdot\gaussm{k-1}{1}{q}$, then $\cM$ has locality $r=2$.
\end{lemma}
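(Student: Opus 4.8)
The plan is to reason geometrically with the multiset $\cM = \sigma\cdot\chi_V - \sum_{j=1}^{l}\chi_{S_j}$ underlying the construction, where $V = \PG(k-1,q)$, and to invoke the locality-$2$ criterion of Lemma~\ref{lemma_locality_2}. The first observation I would record is that every point satisfies $\cM(P) = \sigma - \#\{j : P \le S_j\}$, so no point multiplicity exceeds $\sigma$. Next I would reinterpret the hypothesis: since Lemma~\ref{lemma_construction_solomon_stiffler} gives $n = \sigma\gaussm{k}{1}{q} - \sum_{i}\varepsilon_i\gaussm{i+1}{1}{q}$, the assumed inequality $\sum_{i}\varepsilon_i\gaussm{i+1}{1}{q} < \sigma\gaussm{k-1}{1}{q}$ is equivalent to
$$
  n > \sigma\left(\gaussm{k}{1}{q} - \gaussm{k-1}{1}{q}\right) = \sigma q^{k-1}.
$$
Thus it suffices to prove that $n > \sigma q^{k-1}$ forces locality $2$.

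By Lemma~\ref{lemma_locality_2}, $\cM$ can fail to have locality $2$ only through a \emph{bad} point $P$: one with $\cM(P) = 1$ admitting no line through it that carries two further points of positive multiplicity. I would argue by contradiction, assuming such a $P$ exists, and count the total mass along the $\gaussm{k-1}{1}{q}$ lines through $P$. Since every point $Q \ne P$ lies on a unique such line,
$$
  n - 1 = \sum_{Q \ne P}\cM(Q) = \sum_{L \ni P}\bigl(\cM(L) - 1\bigr).
$$
The bad assumption says each line $L$ through $P$ contains at most one point $Q \ne P$ with $\cM(Q) \ge 1$, so $\cM(L) - 1$ is the multiplicity of a single point and hence at most $\sigma$. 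Summing over all lines through $P$ then gives $n - 1 \le \sigma\gaussm{k-1}{1}{q}$.

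Finally I would compare this with the hypothesis using the elementary identity
$$
  q^{k-1} - \gaussm{k-1}{1}{q} = \frac{(q-2)\left(q^{k-1}-1\right)}{q-1} + 1 \ge 1,
$$
valid for all $q \ge 2$ and $k \ge 2$, which yields $\sigma\gaussm{k-1}{1}{q} + 1 \le \sigma q^{k-1}$ (here $\sigma \ge 1$). The count then forces $n \le \sigma q^{k-1}$, contradicting $n > \sigma q^{k-1}$, so no bad point exists. The step I expect to be most delicate is precisely this last comparison: the counting bound lands exactly one unit short of $\sigma q^{k-1}$, and the slack $q^{k-1} - \gaussm{k-1}{1}{q} \ge 1$ is tight exactly at $q = 2$, where the $+1$ contributed by $\cM(P) = 1$ is what closes the gap. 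This tightness is also why the hypothesis must be a strict inequality.
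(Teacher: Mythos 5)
Your proof is correct, but it takes a genuinely different route from the paper's. The paper argues directly with the structure $\cM=\sigma\cdot\chi_V-\sum_j\chi_{S_j}$: the total multiplicity removed from $\sigma\cdot\chi_V$ is $\sum_i\varepsilon_i\gaussm{i+1}{1}{q}<\sigma\cdot\gaussm{k-1}{1}{q}$, and since deleting a point from the support costs $\sigma$, fewer than $\gaussm{k-1}{1}{q}$ points are deleted; as each deleted point $Q$ spoils only the single line $\langle P,Q\rangle$ among the $\gaussm{k-1}{1}{q}$ lines through a given support point $P$, at least one line through $P$ survives with \emph{all} of its points in the support. You instead translate the hypothesis into $n>\sigma q^{k-1}$, posit a bad point $P$ of multiplicity $1$, and double-count the mass over the pencil of lines through $P$ to obtain $n-1\le\sigma\gaussm{k-1}{1}{q}$, which together with $\sigma\left(q^{k-1}-\gaussm{k-1}{1}{q}\right)\ge 1$ contradicts $n>\sigma q^{k-1}$. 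Both arguments rest on the same two ingredients (there are $\gaussm{k-1}{1}{q}$ lines through a point, and the maximum point multiplicity is at most $\sigma$), and yours is complete; your identification of the $q=2$ case as the tight one is also accurate. The paper's version is shorter and gives the stronger conclusion that a fully supported line through $P$ survives, whereas yours uses only the exact criterion of Lemma~\ref{lemma_locality_2} and in fact establishes a more general fact that does not reference the Solomon--Stiffler structure at all: for $k\ge 2$, any multiset of points in $\PG(k-1,q)$ with maximum point multiplicity at most $\sigma$ and cardinality exceeding $\sigma\cdot\gaussm{k-1}{1}{q}+1$ has locality $2$.
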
 
\begin{proof}
  Let $\cM=\sigma\cdot \chi_V-\sum\limits_{j=1}^l \chi_{S_j}$ using the notation from Lemma~\ref{lemma_construction_solomon_stiffler}, where $V$ denotes the ambient space $\PG(k-1,q)$. 
  In $\PG(k-1,q)$ each point $P$ is on $\gaussm{k-1}{1}{q}$ many lines and so $\chi_V$ as well as $\sigma\cdot\chi_V$ contain $\gaussm{k-1}{1}{q}$ lines through $P$ in its support. 
  Since $\cM$ arises from $\sigma\cdot\chi_V$ by decreasing point multiplicities by $\sum_{i=0}^{k-1} \varepsilon_i\cdot\gaussm{i+1}{1}{q}<\sigma\cdot \gaussm{k-1}{1}{q}$ and 
  removing a point from the support costs multiplicity $\sigma$, at least one full line through $P$ remains if $\cM(P)\ge 1$.   
\end{proof}

\begin{theorem}
  \label{thm_m_q_2_k_3_r_2}
  For each $t\in \mathbb{N}$ we have $n_2(3,3+4t,2)=6+7t$, $n_2(3,4+4t,2)=7+7t$, $n_2(3,5+4t,2)=10+7t$, and $n_2(3,6+4t,2)=11+7t$. Moreover, we have $n_2(3,1,2)=n_2(3,2,2)=5$. 
\end{theorem}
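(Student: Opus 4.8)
The plan is to show that for every $d\ge 3$ the locality-$2$ requirement costs nothing, i.e.\ that $n_2(3,d,2)=g_2(3,d)$, and then to read off the four stated values from a direct evaluation of $g_2(3,d)=d+\lceil d/2\rceil+\lceil d/4\rceil$; the two remaining cases $n_2(3,1,2)=n_2(3,2,2)=5$ are just the special case $k=3$ of Corollary~\ref{cor_d_2_r_2}. The lower bound is then immediate: since $n_2(3,d,2)\ge n_2(3,d)\ge g_2(3,d)$ and $g_2(3,d)$ evaluates to $6+7t,\,7+7t,\,10+7t,\,11+7t$ for $d=3+4t,\,4+4t,\,5+4t,\,6+4t$ respectively, it suffices to construct, for each such $d$, a spanning multiset of points in $\PG(2,2)$ of cardinality $g_2(3,d)$, with locality $2$, all of whose lines have multiplicity at most $n-d$.

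The backbone of the construction is the observation that adding a full Fano plane preserves everything we need: if $\cM$ is a locality-$2$ multiset in $\PG(2,2)$ realizing $(n,d)$, then $\cM+\chi_V$, where $V=\PG(2,2)$, realizes $(n+7,d+4)$ and again has locality $2$. Indeed $|\cM+\chi_V|=n+7$, every line gains exactly $3$ in multiplicity so the distance grows by exactly $4$, and since $\chi_V$ makes all seven points present, every line becomes full and hence every point of multiplicity $1$ trivially lies on a full line, which is the binary form of the locality-$2$ criterion of Lemma~\ref{lemma_locality_2}. This is precisely the step $t\mapsto t+1$ in each of the four families, so it remains only to settle the four base cases $t=0$.

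For the base cases I would take the simplex $\chi_V$ (all seven points once) as the anchor: it is the $[7,3,4]_2$ simplex code of locality $2$ from Example~\ref{example_simplex_code}, settling $d=4,\ n=7$. Deleting one point yields $\chi_V-\chi_{\{P_0\}}$ of length $6$; the three lines through $P_0$ then have multiplicity $2$ and the other four have multiplicity $3$, so $d=3$, and each surviving point lies on the two lines through it that avoid $P_0$, both full, giving locality $2$ and settling $d=3,\ n=6$. For $d=5,\ n=10$ I would add to $\chi_V$ a triangle of three non-collinear points; no line contains all three, so every line gains at most $2$ and the maximal line multiplicity is $5=n-d$. For $d=6,\ n=11$ I would add to $\chi_V$ the four points off a fixed line $L$ (a $4$-arc, the complement of $L$); every line other than $L$ meets this set in exactly two points while $L$ meets it in none, so again the maximal line multiplicity is $5=n-d$. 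In these last two cases all seven points are present, so locality $2$ is automatic, and spanning is clear throughout.

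The routine but essential verifications---that each configuration is spanning, that the maximal line multiplicity is exactly the claimed $n-d$ (not smaller, which would force a larger distance, nor larger), and the small incidence count showing that a $4$-arc in the Fano plane is the complement of a line---are the only places demanding care. The one genuinely delicate point is the locality check in the single base case where a point is absent, namely $d=3$, where one must confirm that each remaining point still lies on a full line; everything else is forced by the uniform ``$+\chi_V$'' induction.
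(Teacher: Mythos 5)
Your proof is correct and essentially coincides with the paper's: your four base configurations (the Fano plane minus a point, the full Fano plane, the Fano plane plus a triangle, and the Fano plane plus the complement of a line) are exactly the Solomon--Stiffler multisets of types $[1;0,1]$, $[1;0,0]$, $[2;1,1]$, $[2;1,0]$ used there, and your ``add $\chi_V$'' induction is precisely incrementing $\sigma$ in those types. The only cosmetic differences are that you verify the line multiplicities and locality by hand rather than via Lemma~\ref{lemma_solomon_stifler_r_2}, and that you settle $d\in\{1,2\}$ by citing Corollary~\ref{cor_d_2_r_2} directly instead of combining Propositions~\ref{prop_reed_muller_parameters_exact} and~\ref{prop_r_1_2_without_d}.
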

\begin{proof}
  In $\PG(3-1,2)$ consider Solomon-Stifler constructions with types $[t+1;0,1]$, $[t+1;0,0]$, $[t+2;1,1]$, and $[t+2;1,0]$, respectively. The lengths $n$ and minimum distances $d$ as well
  as the dimension $k=3$ are as stated. Using Lemma~\ref{lemma_solomon_stifler_r_2} we can easily check that all those examples have locality $r=2$ (and in some cases even locality $r=1$). 
  Proposition~\ref{prop_reed_muller_parameters_exact} yields $n_2(3,2,2)=5$, so that it remains to show $n_2(3,1,2)\ge 5$, which is implied by Proposition~\ref{prop_r_1_2_without_d}.     
\end{proof}

\begin{corollary}
  We have $n_2(3,1,2)=g_2(3,1)+2$, $n_2(3,2,2)=g_2(3,2)+1$, and $n_2(3,d,2)=g_2(3,d)$ for all $d\ge 3$.  
\end{corollary}
We remark that we have $n_2(3,d)=g_2(3,d)$ for all $d\ge 1$.

\begin{lemma}
  \label{lemma_construction_d_3}
  For $k\ge 4$ we have $n_2(k,3,2)\le 2k$.
\end{lemma}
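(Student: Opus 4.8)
The plan is to exhibit one explicit $2k$-point configuration in $\PG(k-1,2)$ and verify the three geometric requirements that the statement demands. By the remark following Lemma~\ref{lemma_locality_2}, a binary set has locality $2$ exactly when every point of multiplicity $1$ lies on a full line; since we are aiming for distance $3$ the underlying code must be projective (no repeated columns), so every multiplicity is $0$ or $1$ and the locality condition becomes simply: every point of the set lies on a line all three of whose points belong to the set. The distance requirement translates, via $d=n-\max_H\cM(H)$, into the condition that every hyperplane of $\PG(k-1,2)$ miss at least three points of the set. So the task reduces to finding a spanning set $S$ of $2k$ points, covered by full lines, with every hyperplane avoiding at least three of its points.

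First I would note why the obvious $\tfrac{3k}{2}$-point union of disjoint lines (as in Corollary~\ref{cor_d_2_r_2}) does not suffice: there a weight-$2$ codeword survives, so the distance is only $2$. To push the distance up to $3$ I would let the lines overlap cyclically. Concretely, take
$$
  S=\left\{\langle e_i\rangle : 1\le i\le k\right\}\cup\left\{\langle e_i+e_{i+1}\rangle : 1\le i\le k\right\},
$$
with indices read modulo $k$ (so the last point is $\langle e_k+e_1\rangle$). For $k\ge 4$ these $2k$ points are pairwise distinct and span $\PG(k-1,2)$, and each point lies on the full line $\{\langle e_i\rangle,\langle e_{i+1}\rangle,\langle e_i+e_{i+1}\rangle\}$, so the corresponding $\cM$ has locality $2$ by the criterion above.

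The crux is the distance computation. For a hyperplane $H=\{x:a\cdot x=0\}$ with $0\neq a\in\F_2^k$, the number of points of $S$ lying off $H$ equals $\#\{i:a_i=1\}+\#\{i:a_i\neq a_{i+1}\}$, i.e.\ the number of ones of $a$ plus the number of cyclic transitions of $a$. I would then argue that this sum is at least $3$ for every nonzero $a$: if $a$ is the all-ones vector the first term is $k\ge 4$; otherwise $a$ has both a $1$ and a $0$, so it has at least one transition, and since the number of transitions around a cycle is even it is at least $2$, giving a total of at least $1+2=3$. Hence every hyperplane misses at least three points, so $d\ge 3$, and a weight-one $a$ shows $d=3$ exactly, yielding a $[2k,k,3]_2$-code with locality $2$ and therefore $n_2(k,3,2)\le 2k$.

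I expect the main obstacle to be precisely this distance verification: the overlaps I introduce to raise the distance from $2$ to $3$ must not inadvertently create a hyperplane containing too many points. The transition-counting identity is exactly what controls this. I would also double-check separately the small edge cases (weight-one and all-ones $a$, together with the pairwise distinctness of the $2k$ points), which is what makes the hypothesis $k\ge 4$ comfortable to state.
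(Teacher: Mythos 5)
Your proposal is correct and uses exactly the paper's construction: the cyclic union of the $k$ lines $\langle e_i,e_{i+1}\rangle$ (indices mod $k$), giving the same $2k$-point set. The only difference is cosmetic: the paper verifies $\cM(H)\le 2k-3$ geometrically by noting that the two lines through a point $\langle e_i\rangle\not\le H$ each meet $H$ in a single point, whereas you count the points off $H$ via the weight of $a$ plus its (even, hence $\ge 2$) number of cyclic transitions — an equivalent and equally valid check.
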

\begin{proof}
  For $1\le i\le k-1$ let $L_i=\langle e_i,e_{i+1}\rangle$ and $L_k=\langle e_k,e_1\rangle$. With this, let $\cM$ be the characteristic function of all points that are contained in one of 
  the lines $L_i$, so that $|\cM|=2k$, $\cM$ has locality $r=2$, and $\cM$ is spanning. For each hyperplane $H$ there exists and index $1\le i\le k$ such that $P:=\langle e_i\rangle$ 
  is not contained in $H$. The two lines $L_j$ that contain $P$ intersect $H$ in precisely a point so that $\cM(H)\le 2k-3$. Thus, the linear code $C$ corresponding to $\cM$ has 
  minimum Hamming distance $d$. 
\end{proof}

\begin{theorem}
  \label{thm_m_q_2_k_4_r_2}
  For each $t\in \mathbb{N}$ we have $n_2(4,5+8t,2)=11+15t$, $n_2(4,6+8t,2)=12+15t$, $n_2(4,7+8t,2)=14+15t$, $n_2(4,8+8t,2)=15+7t$, $n_2(4,9+8t,2)=19+15t$, $n_2(4,10+8t,2)=20+15t$, 
  $n_2(4,11+8t,2)=22+15t$, and $n_2(4,12+8t,2)=23+15t$. Moreover, we have $n_2(4,1,2)=n_2(4,2,2)=6$, $n_2(4,3,2)=8$, and $n_2(4,4,2)=9$. 
\end{theorem}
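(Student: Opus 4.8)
The plan is to split the statement into the periodic values $d\ge 5$ and the four small cases $d\in\{1,2,3,4\}$, and the decisive observation is that all eight periodic values coincide with the Griesmer bound $g_2(4,d)$. A direct computation gives $g_2(4,d_0+8t)=g_2(4,d_0)+15t$ for every residue $d_0$, and the eight base cases $d_0\in\{5,\dots,12\}$ yield $g_2(4,d_0)\in\{11,12,14,15,19,20,22,23\}$, i.e.\ exactly the $t=0$ lengths of the theorem. Hence for every $d\ge 5$ the chain $n_2(4,d,2)\ge n_2(4,d)\ge g_2(4,d)$ already equals the asserted value, and it remains only to exhibit, for each residue, a Solomon--Stiffler construction in $\PG(3,2)$ of that length having locality $2$.

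For the eight residues (in the order $d=5+8t,\dots,12+8t$) I would use the types $[t+1;0,1,1]$, $[t+1;0,1,0]$, $[t+1;0,0,1]$, $[t+1;0,0,0]$, $[t+2;1,1,1]$, $[t+2;1,1,0]$, $[t+2;1,0,1]$, and $[t+2;1,0,0]$. With $\gaussm{4}{1}{2}=15$, $\gaussm{3}{1}{2}=7$, $\gaussm{2}{1}{2}=3$, the formulas of Lemma~\ref{lemma_construction_solomon_stiffler} read $n=15\sigma-\varepsilon_0-3\varepsilon_1-7\varepsilon_2$ and $d=8\sigma-\varepsilon_0-2\varepsilon_1-4\varepsilon_2$, and one checks that these reproduce exactly the stated pairs $(n,d)$; the existence in $\PG(3,2)$ of the required subspaces (at most one plane, one line and one point, subject to the multiplicity bound $\sigma$) is immediate. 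Locality $2$ follows from Lemma~\ref{lemma_solomon_stifler_r_2}, since $\varepsilon_0+3\varepsilon_1+7\varepsilon_2$ is at most $4$ for the four types with $\sigma=t+1$ and at most $11$ for the four types with $\sigma=t+2$, in both cases strictly below $7\sigma=\sigma\gaussm{3}{1}{2}$. This settles $n_2(4,d,2)=g_2(4,d)$ for all $d\ge 5$.

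Three of the small cases reduce to earlier results: $n_2(4,1,2)=n_2(4,2,2)=\lceil 3\cdot 4/2\rceil=6$ is Corollary~\ref{cor_d_2_r_2}, and $n_2(4,4,2)=2^{4-1}+1=9$ is Proposition~\ref{prop_reed_muller_parameters_exact} with $k=4$. For $d=3$ the upper bound $n_2(4,3,2)\le 2\cdot 4=8$ is Lemma~\ref{lemma_construction_d_3}, so the whole theorem comes down to the one genuinely new lower bound $n_2(4,3,2)\ge 8$. Since $g_2(4,3)=7$, the Griesmer bound is insufficient here, and I expect this to be the main obstacle.

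I would prove $n_2(4,3,2)\ge 8$ by ruling out a locality-$2$ $[7,4,3]_2$-code on the level of its point multiset $\cM$. First, inequality~(\ref{eq_point_mult_ub}) with $n=7$, $d=3$, $k=4$, $q=2$ reads $4\cdot 7\ge 7\cdot 3+4\,\cM(P)$, forcing $\cM(P)\le 1$ for every point; thus $\cM$ is a set of $7$ distinct points of $\PG(3,2)$ with $\cM(H)\le 4$ for every plane $H$. By the binary form of Lemma~\ref{lemma_locality_2}, locality $2$ means each of these points lies on a \emph{full} line, all three of whose points belong to $\cM$. Now any two full lines that intersect span a common plane containing $3+3-1=5$ points of $\cM$, contradicting $\cM(H)\le 4$; hence the full lines are pairwise skew, so each of the $7$ points lies on exactly one of them. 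This exhibits the $7$ points as a disjoint union of $3$-element sets, which is impossible because $3\nmid 7$. The contradiction yields $n_2(4,3,2)\ge 8$ and completes the proof.
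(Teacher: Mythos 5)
Your proposal is correct and follows essentially the same route as the paper: Solomon--Stiffler constructions (with the same types, and correcting the paper's duplicated $[t+2;1,0,0]$) verified via Lemma~\ref{lemma_solomon_stifler_r_2} for $d\ge 5$, Corollary~\ref{cor_d_2_r_2}, Proposition~\ref{prop_reed_muller_parameters_exact} and Lemma~\ref{lemma_construction_d_3} for the small cases, and the same skew-full-lines/divisibility-by-$3$ contradiction for $n_2(4,3,2)\ge 8$. The only (harmless) deviation is that you obtain $\gamma(\cM)\le 1$ for a putative $[7,4,3]_2$-code from Inequality~(\ref{eq_point_mult_ub}), whereas the paper excludes a point of multiplicity at least $2$ by a short geometric case analysis; your computation also shows that the stated value $15+7t$ for $d=8+8t$ should read $15+15t$.
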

\begin{proof}
  In $\PG(4-1,2)$ consider Solomon-Stifler constructions with types $[t+1;0,1,1]$, $[t+1;0,1,0]$, $[t+1;0,0,1]$, $[t+1;0,0,0]$, $[t+2;1,1,1]$, $[t+2;1,1,0]$, $[t+2;1,0,0]$, and $[t+2;1,0,0]$, 
  respectively. The lengths $n$ and minimum distances $d$ as well as the dimension $k=4$ are as stated. Using Lemma~\ref{lemma_solomon_stifler_r_2} we can easily check that all those examples
  have locality $r=2$ (and in some cases even locality $r=1$).
     
  Proposition~\ref{prop_reed_muller_parameters_exact} yields $n_2(4,4,2)=9$. Proposition~\ref{prop_r_1_2_without_d} yields $n_2(4,3,2)\ge n_2(4,2,2)\ge n_2(4,1,2)\ge 
  \left\lceil\tfrac{3\cdot 4}{2}\right\rceil=6$. Corollary~\ref{cor_d_2_r_2} yields $n_2(4,1,2)=n_2(4,2,2)=6$ and Lemma~\ref{lemma_construction_d_3} yields $n_2(4,3,2)\le 8$.  
  
  Due to Lemma~\ref{lemma_monotone} it suffices to assume that $\cM$ is a spanning multiset of points in $\PG(4-1,2)$ with cardinality $7$, locality $2$, $\cM(H)\le 4$ for every hyperplane 
  $H$ and conclude a contradiction. First, assume that $P_1$ is a point 
  with multiplicity at least $2$. If $L$ is a line in the support of $\cM$, then we have $P_1\le L$ since otherwise the hyperplane spanned by $P_1$ and $L$ would have multiplicity at least 
  $5$. However, for any point $P_2$ with positive multiplicity that is not contained in $L$ the hyperplane spanned by $P_2$ and $L$ has multiplicity at least $5$ -- contradiction. So, if
  one point $P_1$ has multiplicity at least two, then all points with positive multiplicity have multiplicity at least two. However, a hyperplane spanned by three such points, that are 
  not contained in a line, has multiplicity at least $6$ -- contradiction. Thus, the maximum point multiplicity of $\cM$ is $1$ and for each point $P$ with positive multiplicity there 
  exists a line $L_P$ in the support of $\cM$. No two different such lines can intersect in a point since otherwise the hyperplane spanned by these two lines would have multiplicity at least 
  $5$. However, since $7$ is not divisible by $3$ the points with positive multiplicity cannot be partitioned into pairwise disjoint lines.
\end{proof}

\begin{corollary}
  We have $n_2(4,1,2)=g_2(4,1)+2$, $n_2(4,2,2)=g_2(4,2)+1$, $n_2(4,2,3)=g_2(4,3)+1$, $n_2(4,2,4)=g_2(4,4)+1$, and $n_2(4,d,2)=g_2(4,d)$ for all $d\ge 5$.  
\end{corollary}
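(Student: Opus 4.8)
The plan is to read everything off Theorem~\ref{thm_m_q_2_k_4_r_2}, which already determines $n_2(4,d,2)$ for every $d\in\mathbb{N}$; the corollary merely repackages those exact lengths as deviations from the Griesmer bound $g_2(4,d)=d+\lceil d/2\rceil+\lceil d/4\rceil+\lceil d/8\rceil$. First I would tabulate $g_2(4,d)$ on the four exceptional small distances, obtaining $g_2(4,1)=4$, $g_2(4,2)=5$, $g_2(4,3)=7$, and $g_2(4,4)=8$. Comparing these with the values $n_2(4,1,2)=n_2(4,2,2)=6$, $n_2(4,3,2)=8$, and $n_2(4,4,2)=9$ supplied by the theorem immediately yields the first four identities $n_2(4,1,2)=g_2(4,1)+2$, $n_2(4,2,2)=g_2(4,2)+1$, $n_2(4,3,2)=g_2(4,3)+1$, and $n_2(4,4,2)=g_2(4,4)+1$.

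For the generic range $d\ge 5$ the argument is purely periodic. Since $\lceil (d+8)/2^i\rceil=\lceil d/2^i\rceil+2^{3-i}$ for each $0\le i\le 3$, summing over $i$ gives $g_2(4,d+8)=g_2(4,d)+15$, which matches the common increment $+15t$ appearing throughout Theorem~\ref{thm_m_q_2_k_4_r_2}. Hence it suffices to verify the claim on the eight consecutive base distances $d\in\{5,6,\dots,12\}$, where $g_2(4,d)$ evaluates to $11,12,14,15,19,20,22,23$ respectively; these coincide exactly with the base lengths $11,12,14,15,19,20,22,23$ listed in the theorem. One then concludes $n_2(4,d,2)=g_2(4,d)$ for every $d\ge 5$ by the shared periodicity.

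I expect essentially no genuine obstacle here: the whole corollary is a direct arithmetic consequence of Theorem~\ref{thm_m_q_2_k_4_r_2} together with the closed form of the Griesmer bound, and the deviation from the Griesmer bound is nonzero only for $d\le 4$. The only issues are cosmetic, and I would flag them in passing: the last two entries of the corollary statement carry their distance and locality arguments in transposed order (the intended identities being $n_2(4,3,2)=g_2(4,3)+1$ and $n_2(4,4,2)=g_2(4,4)+1$), and the periodicity computation forces the base length for the residue $d\equiv 0\pmod 8$ to be $15+15t$ rather than the $15+7t$ printed in the theorem.
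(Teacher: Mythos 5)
Your proposal is correct and matches the paper's (implicit) argument exactly: the corollary is stated without proof as an immediate arithmetic consequence of Theorem~\ref{thm_m_q_2_k_4_r_2}, and your tabulation of $g_2(4,d)$ for $d\le 12$ together with the periodicity $g_2(4,d+8)=g_2(4,d)+15$ is precisely that verification. Your two flagged issues are also right: the corollary's entries $n_2(4,2,3)$ and $n_2(4,2,4)$ have transposed arguments (intended: $n_2(4,3,2)$ and $n_2(4,4,2)$), and the theorem's $15+7t$ should read $15+15t$, consistent with the Solomon--Stifler type $[t+1;0,0,0]$.
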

We remark that we have $n_2(4,d)=g_2(4,d)$ for all $d\ge 1$.

\begin{theorem}
  \label{thm_m_q_2_k_5_r_2}
  For each $t\in \mathbb{N}$ we have $n_2(5,9+16t,2)=20+31t$, $n_2(5,10+16t,2)=21+31t$, $n_2(5,11+16t,2)=23+31t$, $n_2(5,12+16t,2)=24+31t$, $n_2(5,13+16t,2)=27+31t$, $n_2(5,14+16t,2)=28+31t$, 
  $n_2(5,15+16t,2)=30+31t$, $n_2(5,16+16t,2)=31+31t$, $n_2(5,17+16t,2)=36+31t$, $n_2(5,18+16t,2)=37+31t$, $n_2(5,19+16t,2)=39+31t$, $n_2(5,20+16t,2)=40+31t$, $n_2(5,21+16t,2)=43+31t$, 
  $n_2(5,22+16t,2)=44+31t$, $n_2(5,23+16t,2)=46+31t$, and $n_2(5,24+16t,2)=47+31t$. Moreover, we have $n_2(5,1,2)=n_2(5,2,2)=8$, $n_2(5,3,2)=10$, $n_2(5,4,2)=11$, $n_2(5,5,2)= 13$, 
  $n_2(5,6,2)=14$, $n_2(5,7,2)= 16$, and $n_2(5,8,2)=17$. 
\end{theorem}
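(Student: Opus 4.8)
The plan is to separate the generic range $d\ge 9$, which accounts for all sixteen residue families in the statement, from the finitely many exceptional values $d\in\{1,\dots,8\}$, following verbatim the scheme used for $k\in\{3,4\}$ in Theorems~\ref{thm_m_q_2_k_3_r_2} and~\ref{thm_m_q_2_k_4_r_2}. For the generic range the lower bound is immediate: since $n_2(5,d,2)\ge n_2(5,d)\ge g_2(5,d)$ by the Griesmer bound~(\ref{ie_upper_bound_griesmer}), it suffices to observe that each of the sixteen stated lengths equals $g_2(5,d)=\sum_{i=0}^{4}\lceil d/2^i\rceil$, a one-line check for each residue. The sixteen families $d=16t+j$ with $9\le j\le 24$ together exhaust every $d\ge 9$ because $\gaussm{5}{1}{2}=31$ and $2^{5-1}=16$.

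For the matching generic upper bounds I would, for each residue, produce a Solomon--Stiffler construction (Lemma~\ref{lemma_construction_solomon_stiffler}) of type $[t+1;0,\varepsilon_2,\varepsilon_1,\varepsilon_0]$ for $9\le j\le 16$ and of type $[t+2;1,\varepsilon_2,\varepsilon_1,\varepsilon_0]$ for $17\le j\le 24$, with $(\varepsilon_2,\varepsilon_1,\varepsilon_0)$ running over all eight binary triples; writing $\sigma$ and $\varepsilon_3$ for the first two entries, one checks that $d=16\sigma-8\varepsilon_3-4\varepsilon_2-2\varepsilon_1-\varepsilon_0$ and $n=31\sigma-15\varepsilon_3-7\varepsilon_2-3\varepsilon_1-\varepsilon_0$ reproduce exactly the claimed pairs $(d,n)$. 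Locality $2$ is then free from Lemma~\ref{lemma_solomon_stifler_r_2}, whose hypothesis $\varepsilon_0+3\varepsilon_1+7\varepsilon_2+15\varepsilon_3<15\sigma$ holds throughout (the left-hand side is at most $11$ when $\varepsilon_3=0$ and at most $26$ when $\varepsilon_3=1$, against $15\sigma\ge 15$ resp.\ $15\sigma\ge 30$). The existence of the required subspaces is routine and handled exactly as for $k\in\{3,4\}$.

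The exceptional values $d\le 8$ split into an easy part and a hard part. For $d\in\{1,2\}$ Corollary~\ref{cor_d_2_r_2} gives $n_2(5,d,2)=\lceil 3\cdot5/2\rceil=8$, and $d=8$ is exactly Proposition~\ref{prop_reed_muller_parameters_exact}, giving $2^{4}+1=17$. The remaining upper bounds I would get from explicit constructions: $n_2(5,3,2)\le 2\cdot 5=10$ from Lemma~\ref{lemma_construction_d_3}; $n_2(5,4,2)\le 11$ from the line construction of Lemma~\ref{lemma_line_construction} applied to a projective $[5,4,2]_2$-code (a frame of five points in $\PG(3,2)$), for which $\min\{2d',n'+1\}=\min\{4,6\}=4$ and $2n'+1=11$; and for $d\in\{5,6,7\}$ I would exhibit explicit spanning multisets of points in $\PG(4,2)$ with locality $2$ of cardinalities $13$, $14$ and $16$ (for instance as feasible solutions of the integer linear program for $n_q(k,d,2)$ stated in Section~\ref{sec_geometric}), verifying directly the hyperplane bound $\cM(H)\le n-d$ and, via Lemma~\ref{lemma_locality_2}, that every multiplicity-$1$ point lies on a full line.

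The main obstacle is the matching lower bounds for $d\in\{3,4,5,6,7\}$, where $n_2(5,d,2)$ strictly exceeds $g_2(5,d)$ and no general result in hand supplies the value. Because the attainable lengths form an up-set (Lemma~\ref{lemma_monotone}), for each such $d$ it suffices to rule out a single length, namely $n=9,10,12,13,15$ for $d=3,4,5,6,7$ respectively: one assumes a spanning multiset $\cM$ in $\PG(4,2)$ of that cardinality with locality $2$ and $\cM(H)\le n-d$ for every hyperplane $H$, and derives a contradiction, exactly as the $[7,4,3]_2$ non-existence argument does in the proof of Theorem~\ref{thm_m_q_2_k_4_r_2}. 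The available tools are the point-multiplicity inequalities~(\ref{eq_point_mult_ub}) and~(\ref{eq_point_mult_lb}), the full-line characterisation of Lemma~\ref{lemma_locality_2}, and the counting bound of Lemma~\ref{lemma_number_dual_codewords_weight_3}, which forces at least $\lceil n/3\rceil$ distinct full lines in the support; a case split according to whether $\cM$ carries a point of multiplicity $\ge 2$ (which together with $\cM(H)\le n-d$ confines the configuration drastically) followed by a covering/divisibility argument on these full lines should close each case. I expect essentially all the effort to sit in this casework rather than in the constructions, and if a fully combinatorial treatment becomes unwieldy these five finite non-existence statements can instead be certified by solving the integer linear program for $n_q(k,d,2)$ from Section~\ref{sec_geometric}.
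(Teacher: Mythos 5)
Your treatment of the constructions and of $d\in\{1,2,8\}$ matches the paper: the sixteen Solomon--Stiffler types, the verification of the hypothesis of Lemma~\ref{lemma_solomon_stifler_r_2}, Corollary~\ref{cor_d_2_r_2}, Proposition~\ref{prop_reed_muller_parameters_exact}, Lemma~\ref{lemma_construction_d_3} for $d=3$, and Lemma~\ref{lemma_line_construction} on a $[5,4,2]_2$-code for $d=4$ are exactly what the paper uses; for $d\in\{5,6,7\}$ the paper simply exhibits explicit generator matrices of $[13,5,5]_2$-, $[14,5,6]_2$- and $[16,5,7]_2$-codes with locality $2$, which is the concrete form of what you propose.

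The gap is in the lower bounds. First, you overlook that for $d\in\{3,4,5,6\}$ no locality-specific argument is needed at all: the paper just invokes $n_2(5,d,2)\ge n_2(5,d)$ together with the classical values $n_2(5,3)=10$, $n_2(5,4)=11$, $n_2(5,5)=13$, $n_2(5,6)=14$ (these are precisely the four distances where $n_2(5,d)>g_2(5,d)$, known since \cite{baumert1973note,van1981smallest}). Your plan to rule out, say, a $[12,5,5]_2$ multiset with locality $2$ by hand would amount to re-proving the non-existence of an unrestricted $[12,5,5]_2$-code. Second, and more seriously, $d=7$ is the one case where the locality genuinely matters, since $n_2(5,7)=15<16=n_2(5,7,2)$, and here your sketch does not close: the covering/divisibility heuristic you borrow from the $[7,4,3]_2$ argument fails because $15$ \emph{is} divisible by $3$, so a partition of the support into five full lines is not excluded on numerical grounds. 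The paper's idea is different: extend a putative $[15,5,7]_2$-code with locality $2$ by a parity-check bit to a $[16,5,8]_2$-code, which is unique up to isomorphism and of Solomon--Stiffler type $[1;1,0,0,0,0]$ (\cite[Lemma 12]{kurz2021pir}); its point set has maximum multiplicity $1$ and contains no full line, and deleting the added point cannot create one, contradicting Lemma~\ref{lemma_locality_2}. Your ILP fallback would certify the statement computationally, but the extension-plus-uniqueness argument is the missing idea if you want a proof.
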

\begin{proof}
  In $\PG(5-1,2)$ consider Solomon-Stifler constructions with types $[t+1;0,1,1,1]$, $[t+1;0,1,1,0]$, $[t+1;0,1,0,1]$, $[t+1;0,1,0,0]$, $[t+1;0,0,1,1]$, $[t+1;0,0,1,0]$, $[t+1;0,0,0,1]$, 
  $[t+1;0,0,0,0]$, $[t+2;1,1,1,1]$,  $[t+2;1,1,1,0]$, $[t+2;1,1,0,1]$, $[t+2;1,1,0,0]$, $[t+2;1,0,1,1]$, $[t+2;1,0,1,0]$, $[t+2;1,0,0,1]$, and $[t+2;1,0,0,0]$, respectively. The lengths 
  $n$ and minimum distances $d$ as well as the dimension $k=5$ are as stated. Using Lemma~\ref{lemma_solomon_stifler_r_2} we can easily check that all those examples have locality $r=2$
  (and in some cases even locality $r=1$).

  Proposition~\ref{prop_reed_muller_parameters_exact} yields $n_2(5,8,2)=17$. Proposition~\ref{prop_r_1_2_without_d} yields $n_2(5,3,2)\ge n_2(5,2,2)\ge n_2(5,1,2)\ge 
  \left\lceil\tfrac{3\cdot 5}{2}\right\rceil=8$. Corollary~\ref{cor_d_2_r_2} yields $n_2(5,1,2)=n_2(5,2,2)=8$. Lemma~\ref{lemma_construction_d_3} implies $n_2(5,3,2)\le 10$. Applying 
  Lemma~\ref{lemma_line_construction} to an $[5,4,2]_2$-code gives $n_2(5,4,2)\le 11$. The generator matrices
  $$
    \begin{pmatrix}
      1111111010000\\
      0001111101000\\
      0110011100100\\
      1011100000010\\
      1000111000001
    \end{pmatrix}\!\!,\quad   
     \begin{pmatrix}
      11111110010000\\
      00011111101000\\
      01100110100100\\
      10101011100010\\
      11010011000001
    \end{pmatrix}  
    \quad\text{and}\quad
    \begin{pmatrix}
    0000000001111111\\
    0001111110000011\\
    0010001110001101\\
    0100010110010110\\
    1000111000101001  
    \end{pmatrix}
  $$
  give $[13,5,5]_2$-, $[14,5,6]_2$-, and $[16,5,7]_2$-codes with locality $2$, so that we have $n_2(5,5,2)\le 13$, $n_2(5,6,2)\le 14$, and $n_2(5,7,2)\le 16$. 
  For the lower bounds we have $n_2(5,3,2)\ge n_2(5,3)=10$, $n_2(5,4,2)\ge n_2(5,4)=11$, $n_2(5,5,2)\ge n_2(5,5)= 13$, and  $n_2(5,6,2)\ge n_2(5,6)=14$. Finally, if $C$ is an $[15,5,7]_2$-code 
  with locality $r=2$, then adding a parity check bit yields an $[16,5,8]_2$-code $C'$. Let $\cM$ and $\cM'$ denote the multisets of points corresponding to $C$ and $C'$, respectively. Given its 
  parameters, the code $C'$ is unique up to isomorphism and can be obtained by the Solomon-Stifler construction of type $[1;1,0,0,0,0]$, see e.g.\ \cite[Lemma 12]{kurz2021pir}. Note 
  that the maximum point multiplicity of $\cM'$ is $1$ and that $\cM'$ does not contain a full line in its support. Since $\cM'$ arises from $\cM$ by increasing the point multiplicity of a (specific) 
  point by one, also the maximum point multiplicity of $\cM$ is $1$ and $\cM'$ does not contain a full line in its support. Thus, $\cM$ cannot have locality $2$ and we have $n_2(5,7,2)=16$.  
\end{proof}

\begin{corollary}
  We have $n_2(5,1,2)=g_2(5,1)+3$, $n_2(5,2,2)=g_2(5,2)+2$, $n_2(5,2,3)=g_2(5,3)+2$, $n_2(5,2,4)=g_2(5,4)+2$, $n_2(5,2,5)=g_2(5,5)+1$, $n_2(5,2,6)=g_2(5,6)+1$, $n_2(5,2,7)=g_2(5,7)+1$, 
  $n_2(5,2,8)=g_2(5,8)+1$, and $n_2(5,d,2)=g_2(5,d)$ for all $d\ge 9$.  
\end{corollary}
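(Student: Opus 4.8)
The plan is to read the corollary directly off Theorem~\ref{thm_m_q_2_k_5_r_2} by evaluating the Griesmer bound $g_2(5,d)=\sum_{i=0}^{4}\lceil d/2^i\rceil$ and comparing it, value by value, with the lengths $n_2(5,d,2)$ determined there. Since that theorem already pins down every $n_2(5,d,2)$, no further code-theoretic work is needed: the corollary merely records how far each optimal length lies above the Griesmer bound.

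First I would dispose of the small cases $1\le d\le 8$ by direct evaluation. Summing the ceilings gives $g_2(5,1)=5$, $g_2(5,2)=6$, $g_2(5,3)=8$, $g_2(5,4)=9$, $g_2(5,5)=12$, $g_2(5,6)=13$, $g_2(5,7)=15$, and $g_2(5,8)=16$. Subtracting these from the values $8,8,10,11,13,14,16,17$ supplied by Theorem~\ref{thm_m_q_2_k_5_r_2} yields precisely the claimed excesses $+3,+2,+2,+2,+1,+1,+1,+1$.

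For $d\ge 9$ I would exploit the periodicity of the Griesmer bound. Since $16/2^i=2^{4-i}\in\mathbb{N}$ for $0\le i\le 4$, replacing $d$ by $d+16$ raises each ceiling $\lceil d/2^i\rceil$ by $2^{4-i}$, so that
$$
  g_2(5,d+16)=g_2(5,d)+\sum_{i=0}^{4}2^{4-i}=g_2(5,d)+31=g_2(5,d)+\gaussm{5}{1}{2}.
$$
This per-period increment of $31$ coincides with the coefficient of $t$ in every family of Theorem~\ref{thm_m_q_2_k_5_r_2}, so it suffices to verify the sixteen base cases $9\le d\le 24$. For these one computes $g_2(5,d)=20,21,23,24,27,28,30,31,36,37,39,40,43,44,46,47$, and each value agrees with the corresponding $t=0$ length in the theorem; the periodicity identity then propagates $n_2(5,d,2)=g_2(5,d)$ to all $d\ge 9$. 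The argument is thus a finite verification, and the only point requiring care is the bookkeeping: one must align the sixteen residue classes of $d$ modulo $16$ with the sixteen families listed in Theorem~\ref{thm_m_q_2_k_5_r_2} and confirm the base-case Griesmer values, which is mechanical once the displayed periodicity identity is available.
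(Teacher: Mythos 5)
Your proposal is correct and matches the paper's treatment: the paper gives no separate proof for this corollary, regarding it as an immediate numerical comparison of the values in Theorem~\ref{thm_m_q_2_k_5_r_2} with the Griesmer bound, which is exactly the finite verification (plus the period-$16$ identity $g_2(5,d+16)=g_2(5,d)+31$ matching the $31t$ increments) that you carry out. Your computed values of $g_2(5,d)$ and the resulting excesses all check out.
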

We remark that we have $n_2(5,d)=g_2(5,d)$ for all $d\in\mathbb{N}\backslash\{3,4,5,6\}$. Note that the code $C'$ used in the proof of Theorem~\ref{thm_m_q_2_k_5_r_2} to show $n_2(5,7,2)>15$ 
also has to be even, i.e., all of its weights are divisible by $2$. So, for some parameters we might be able to show that the weight distribution of an even $[n,k,d]_2$-code $C'$ (where also $d$ 
is even) is unique and can be determined using theoretical methods. So, for each $[n-1,k,d-1]_2$-code $C$ adding a parity check bit yields such an even $[n,k,d]_2$-code with known weight distribution.
Applying the MacWilliams transform we then compute also the dual weight distribution of $C'$. To this end let us slightly generalize Lemma~\ref{lemma_number_dual_codewords_weight_3} and the proof idea for 
$n_2(5,7,2)>15$.
\begin{lemma}
  Let $C$ be an $[n,k,d]_2$-code with odd minimum distance $d$ and $C'$ be the $[n+1,k,d+1]_2$-code obtained from $C$ by adding a parity check bit. If $C'$ has dual minimum distance $d^\perp=3$ 
  and at less than $n/3$ dual codewords of weight $3$, then the locality of $C$ is larger than $2$.  
\end{lemma}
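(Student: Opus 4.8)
The plan is to run the counting argument of Lemma~\ref{lemma_number_dual_codewords_weight_3}, but applied to full lines of the extended code $C'$ rather than of $C$ itself; the only genuinely new ingredient is a description of what adding a parity check bit does to the associated multiset of points. First I would record this geometric effect. If $G$ is a generator matrix of $C$ with columns $g_1,\dots,g_n$, then the parity bit of a codeword $xG$ equals $x\cdot s$ with $s=\sum_{j=1}^n g_j$, so $C'$ has generator matrix $G'=\left[\,G\mid s\,\right]$. Hence the multiset $\cM'$ corresponding to $C'$ arises from the multiset $\cM$ corresponding to $C$ by increasing the multiplicity of the single point $\langle s\rangle$ by one; in particular $\cM(P)\le\cM'(P)$ for every point $P$ and $|\cM'|=n+1$.

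Next I would use the hypothesis $d^\perp(C')=3$. A dual codeword of weight $1$ corresponds to a zero column and a dual codeword of weight $2$ over $\F_2$ to two equal columns, so $d^\perp(C')=3$ forces $C'$ to be projective, i.e.\ $\cM'(P)\le 1$ for all $P$. Combined with the pointwise inequality $\cM\le\cM'$ this shows that $\cM$ is projective as well, so every point of $\cM$ with positive multiplicity has multiplicity exactly $1$.

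Now assume, for contradiction, that $C$ has locality $2$. Since no point of $\cM$ has multiplicity at least $2$, Lemma~\ref{lemma_locality_2} forces every point of $\cM$ with positive multiplicity to lie on a full line in the support of $\cM$. As each of the $n$ such points lies on at least one full line and each full line contains exactly three of them, a double count of point--line incidences yields at least $n/3$ distinct full lines in $\cM$. Because $\cM\le\cM'$ pointwise, every full line of $\cM$ is also a full line of $\cM'$, and over $\F_2$ every full line of $\cM'$ corresponds to exactly one weight-$3$ codeword of $(C')^\perp$, with distinct lines giving distinct codewords. Consequently $C'$ has at least $n/3$ dual codewords of weight $3$, contradicting the hypothesis that there are fewer than $n/3$ of them; hence the locality of $C$ exceeds $2$.

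The hard part will really only be the bookkeeping of the first two steps: pinning down that the parity-check extension adjoins precisely the point $\langle s\rangle$ and nothing else, and verifying that $d^\perp(C')=3$ propagates projectivity back from $\cM'$ to $\cM$. Once those are in place, the incidence count is identical to that of Lemma~\ref{lemma_number_dual_codewords_weight_3}. I would also note that the oddness of $d$ is exactly what guarantees $d(C')=d+1$, so that $C'$ is the intended code, even though it plays no explicit role in the locality count itself.
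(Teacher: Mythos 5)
Your proof is correct and follows essentially the same route as the paper: read off that the parity-check extension adds a single point to the multiset, use $d^\perp(C')=3$ to force projectivity of $\cM'$ and hence of $\cM$, and then compare the number of full lines needed to cover the support (via Lemma~\ref{lemma_locality_2}) with the number of weight-$3$ dual codewords of $C'$. The only cosmetic difference is that you count the at least $n/3$ full lines of $\cM$ directly for a contradiction, whereas the paper argues contrapositively that fewer than $n/3$ lines cover at most $n-1$ of the $n+1$ points of $\cM'$ and so some point $Q\neq P$ is uncovered; the two counts are equivalent.
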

\begin{proof}
  Let $\cM$ and $\cM'$ be the multisets of points corresponding to $C$ and $C'$, respectively. Note that $\cM'$ arises from $\cM$ by increasing the point multiplicity of a certain point $P$ by $1$. 
  Using the fact that the dual minimum distance of $C'$ is $3$ we conclude that both $\cM'$ and $\cM$ have a maximum point multiplicity of $1$. Since $C'$ contains less than $n/3$ dual codewords 
  of weight $3$, at most $n-1$ points with positive multiplicity in $\cM'$ can be contained in a line that is fully contained in the support of $\cM'$. Thus, there exists a point $Q\neq P$ 
  with $\cM(Q)=1$ that is not contained in line $L$ that is fully contained in the support of $\cM$. Using Lemma~\ref{lemma_locality_2} we conclude that the locality of $\cM$ and $C$ is at least $3$. 
\end{proof} 
 
\begin{lemma}
  \label{lemma_d_4_construction}
  For each $t\in\mathbb{N}_{\ge 2}$ we have $n_q(2t,4,2)\le 3t+3$ and $n_2(2t+1,4,2)\le 3t+5$. 
\end{lemma}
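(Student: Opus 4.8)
The plan is to prove both inequalities by exhibiting explicit spanning multisets of points built from $t+1$ lines placed in \emph{circuit position}, by which I mean that any $t$ of the lines are linearly independent. In both cases locality $2$ will be automatic, since each marked point will lie on a line carrying three marked points of positive multiplicity, so by Lemma~\ref{lemma_locality_2} the whole content is to control $\cM(H)$ for every hyperplane $H$. The key observation is that a line $L_i$ either lies in $H$ or meets it in a single point, which is at most one of its marked points; hence, if $a$ denotes the number of the $t+1$ lines fully contained in $H$, the contribution of all line-points to $\cM(H)$ is at most $(t+1)+2a$. Everything then reduces to bounding the number of lines a hyperplane can contain, which is dictated by the circuit structure.

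For the even bound $n_q(2t,4,2)\le 3t+3$ I would work in $\PG(2t-1,q)$ and take a basis $a_1,\dots,a_t,b_1,\dots,b_t$ of $\F_q^{2t}$, completed by $a_{t+1}=-\sum_{i=1}^t a_i$ and $b_{t+1}=-\sum_{i=1}^t b_i$, so that the two global relations $\sum_i a_i=\sum_i b_i=0$ hold. Setting $L_i=\langle a_i,b_i\rangle$, let $\cM$ be the characteristic function of the $3(t+1)=3t+3$ points $\langle a_i\rangle,\langle b_i\rangle,\langle a_i+b_i\rangle$ for $1\le i\le t+1$. A short linear-algebra check shows that deleting any single line leaves $2t$ independent vectors, so any $t$ lines span all of $\F_q^{2t}$; in particular (using $t\ge 2$) the lines are pairwise disjoint, the $3t+3$ points are distinct, and no hyperplane can contain more than $t-1$ of the lines. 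Thus $a\le t-1$ and $\cM(H)\le (t+1)+2(t-1)=3t-1=n-4$ for every hyperplane $H$, giving minimum distance at least $4$, while each marked point is collinear with the other two marked points of its line.

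For the odd bound $n_2(2t+1,4,2)\le 3t+5$ I would instead work in $\PG(2t,2)$ with a deficiency-one circuit: take a basis $a_1,\dots,a_t,b_1,\dots,b_{t+1}$ of $\F_2^{2t+1}$, set $a_{t+1}=\sum_{i=1}^t a_i$ (the single relation $\sum_i a_i=0$), and again put full lines $L_i=\langle a_i,b_i\rangle$ into the support. Now any $t$ lines are independent and span a hyperplane, so the only hyperplanes meeting $t$ lines fully are the $t+1$ subspaces $H_j=\langle L_i:i\neq j\rangle$, on which the lines contribute $(t+1)+2t=3t+1$ points. To the $3t+3$ line-points I add a double point $P_0=\langle b_1+\dots+b_{t+1}\rangle$, reaching $n=3t+5$. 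Since $H_j$ is exactly the hyperplane omitting the $b_j$-direction, one has $P_0\notin H_j$ for every $j$, so the critical hyperplanes still carry only $3t+1=n-4$ points; every other hyperplane contains at most $t-1$ lines and hence at most $(t+1)+2(t-1)+2=3t+1$ points. Thus the minimum distance is at least $4$, and locality $2$ holds because every line-point sits on a full line and $P_0$ has multiplicity $2$.

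The distance bookkeeping above is routine; the steps that need genuine care are verifying the circuit/independence claims and, for the odd case, checking that $P_0$ really avoids all of the critical hyperplanes $H_j$ and is not accidentally one of the line-points. I would handle both by the explicit coordinate descriptions: the global relations make the ``delete one line'' computations transparent and pin down each $H_j$ as the hyperplane omitting a single basis direction, and $P_0$ is separated from every line-point because it has $b$-weight $t+1\ge 3$ whereas each of the points $\langle a_i\rangle,\langle b_i\rangle,\langle a_i+b_i\rangle$ has $b$-weight at most $1$.
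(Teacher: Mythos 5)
Your proof is correct and follows essentially the same approach as the paper: the even-case point set is literally the paper's ($t$ coordinate lines together with the diagonal line through $\langle\sum e_{2i-1}\rangle$, $\langle\sum e_{2i}\rangle$, $\langle\sum e_i\rangle$), and your odd-case construction, though it places the extra line and the double point slightly differently, follows the same template of $t+1$ lines in circuit position plus a double point. Your uniform bound $\cM(H)\le (t+1)+2a+\cM(P_0)$, with $a$ the number of fully contained lines controlled by the circuit property, is a somewhat cleaner packaging of the paper's case analysis, and the independence, disjointness, and $P_0\notin H_j$ claims all check out.
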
 
\begin{proof}
  Consider the $t$ triples of points $L_i=\left\{ e_{2i-1},e_{2i},e_{2i-1}+e_{2i}\right\}$ for $1\le i\le t$, the triple of points $L'=\left\{ \sum_{i=1}^t e_{2i-1},\sum_{i=1}^t e_{2i},\sum_{i=1}^{2t} e_i \right\}$, 
  and the triple of points $L''=\left\{ \sum_{i=1}^{t+1} e_{2i-1},\sum_{i=1}^t e_{2i},\sum_{i=1}^{2t+1} e_i\right \}$. (Over $\F_2$ these triples are full lines.) With this let $\cM=\chi_{L'}+\sum_{i=1}^t \chi_{L_i}$ 
  and $\cM'=2\cdot \chi_{P}+\chi_{L''}+\sum_{i=1}^t \chi_{L_i}$, where $P=\langle e_{2t+1}\rangle$. Note that $\cM$ spans $\PG(2t-1,q)$, $\cM'$ spans $\PG(2t,q)$, $|\cM|=3t+3$, $|\cM'|=3t+5$, and both multisets of points 
  have locality $r=2$. So, it remains to upper bound the multiplicities of the hyperplanes of the respective ambient spaces. By construction, the multiplicities of $H\cap L'$, $H\cap L''$, and 
  $H\cap L_i$, where $1\le i\le t$, are not equal to $2$ for each hyperplane $H$. 
  
  Let us first consider $\cM$ in $\PG(2t-1,q)$. Note that a hyperplane $H$ cannot fully contain all $L_i$ for $1\le i\le t$. 
  Due to symmetry we assume that $L_1$ is not fully contained in $H$. If also another triple $L_i$ with $2\le i\le t$ is not fully contained in $H$, then we have $\cM(H)\le 3t+3-2\cdot 2$. So, let us assume 
  that $H$ fully contains all triples $L_i$ for $2\le i\le t$. If $\left|H\cap L_1\right|=1$, then due to symmetry we assume $\left\langle e_1\right\rangle\le H$, so that $H$ is uniquely determined and 
  we have $\cM(H)\le 3t+3-2\cdot 2$ since $L'$ is not fully contained in $H$. If $\left|H\cap L_1\right|=0$ then $L'$ cannot be fully contained in $H$, so that $\cM(H)\le 3t+3-5$.  
  
  Next consider $\cM'$ in $\PG(2t,q)$. There is a unique hyperplane $H$ that fully contains $L_i$ for $1\le i\le t$. Here we have $\cM'(H)\le 3t+5 -4$ since $H$ does not contain $P$ and also does not 
  fully contain $L''$. In the remaining cases we assume due to symmetry that $H$ does not fully contain $L_1$. If also another triple $L_i$ with $2\le i\le t$ is not fully contained in $H$, then we 
  have $\cM(H)\le 3t+5-2\cdot 2$. Similarly, if $H$ does not fully contain $L''$, then we have $\cM(H)\le 3t+5-2\cdot 2$. So, let us assume that $H$ fully contains all triples $L_i$ for $2\le i\le t$ 
  and also fully contains $L''$. However, then there is a unique possibility for $H$ and we can easily check that $P$ is not contained in $H$ and we also have $\cM(H)\le 3t+5-2\cdot 2$.   
\end{proof} 
Note that the upper bounds in Lemma~\ref{lemma_d_4_construction} are valid for $t<2$ also while there are better constructions for dimension $k=1$ and dimension $k=3$, when $q=2$. In 
Table~\ref{table_q_2_k_6} we will see that there are $41$ $[12,6,4]_2$-codes while only the example from Lemma~\ref{lemma_d_4_construction} has locality $r=2$.
 
\begin{theorem}
  \label{thm_m_q_2_k_6_r_2}
  For each $d\in \mathbb{N}_{\ge 21}\cup\{17,18\}$ we have $n_2(6,d,2)=g_2(6,d)$. Moreover, we have $n_2(6,1,2)=n_2(6,2,2)=9$, $n_2(6,3,2)=n_2(6,4,2)=12$, $n_2(6,5,2)=n_2(6,6,2)=15$, 
  $n_2(6,7,2)=n_2(6,8,2)=18$, $n_2(6,9,2)=22$, $n_2(6,10,2)=23$, $n_2(6,11,2)=25$, $n_2(6,12,2)=26$, $n_2(6,13,2)=30$, $n_2(6,14,2)=31$, $n_2(6,15,2)=n_2(6,16,2)=33$, $n_2(6,19,2)=41$, and $n_2(6,19,2)=42$. 
\end{theorem}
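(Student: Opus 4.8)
The plan is to split the claim into the stable regime $d\in\mathbb{N}_{\ge 21}\cup\{17,18\}$, where the Griesmer bound is attained, and the finitely many small or boundary values, and to bound $n_2(6,d,2)$ from above and below separately in each case. For every $d$ the chain $n_2(6,d,2)\ge n_2(6,d)\ge g_2(6,d)$ from the introduction is available, and since $n_2(6,\cdot)$ is completely determined in the literature I may treat $n_2(6,d)$ as known; this already supplies the lower bounds whenever $n_2(6,d,2)=g_2(6,d)$ is claimed. The lower bounds that strictly exceed the Griesmer value (for $d\le 16$ and for $d\in\{19,20\}$) will instead have to come from Proposition~\ref{prop_r_1_2_without_d}, Corollary~\ref{cor_d_2_r_2}, non-existence arguments in the spirit of Lemma~\ref{lemma_number_dual_codewords_weight_3}, and exhaustive runs of the integer linear program computing $n_q(6,d,2)$. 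Throughout, Lemma~\ref{lemma_monotone} reduces each assertion to a single extremal length.

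For the regime $d\in\mathbb{N}_{\ge 21}\cup\{17,18\}$ I would produce, for each residue of $d$ modulo $32$, a Solomon--Stifler construction (Lemma~\ref{lemma_construction_solomon_stiffler}) of some type $[\sigma;\varepsilon_4,\varepsilon_3,\varepsilon_2,\varepsilon_1,\varepsilon_0]$ of length $n=g_2(6,d)$, exactly as in the proofs of Theorems~\ref{thm_m_q_2_k_3_r_2}, \ref{thm_m_q_2_k_4_r_2}, and \ref{thm_m_q_2_k_5_r_2}. Two conditions then require checking. First, the subtracted subspaces must be geometrically realizable, i.e.\ each point of $\PG(5,2)$ may lie in at most $\sigma$ of them; since in $\PG(5,2)$ a solid and a plane always meet, several of the minimal-$\sigma$ types (for instance those needing disjoint higher-dimensional flats) only become realizable once $\sigma$ is large enough, and the largest such residue-dependent threshold is precisely what forces $d\ge 21$ in general while still admitting the two lucky residues that give $d=17,18$. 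Second, the locality is verified through the sufficient criterion $\sum_{i}\varepsilon_i\gaussm{i+1}{1}{2}<\sigma\cdot\gaussm{5}{1}{2}=31\sigma$ of Lemma~\ref{lemma_solomon_stifler_r_2}. Raising $\sigma$ by one with the $\varepsilon_i$ fixed adds a full simplex, increasing $n$ by $63$ and $d$ by $32$, so the verification is genuinely periodic and only the $32$ residues need to be inspected.

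For the remaining values I would assemble upper bounds from the construction toolbox: Corollary~\ref{cor_d_2_r_2} gives $n_2(6,1,2)=n_2(6,2,2)=\lceil 3\cdot 6/2\rceil=9$; Lemma~\ref{lemma_construction_d_3} gives $n_2(6,3,2)\le 12$; Lemma~\ref{lemma_d_4_construction} with $t=3$ gives $n_2(6,4,2)\le 12$; Proposition~\ref{prop_reed_muller_parameters_exact} yields $n_2(6,16,2)=33$ directly; and iterated applications of the line construction Lemma~\ref{lemma_line_construction} (starting from suitable projective $[n',5,d']_2$-codes) together with realizable low-$\sigma$ Solomon--Stifler types cover further $d\in\{5,\dots,15\}$. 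The stubborn intermediate cases are handled by explicit generator matrices found by computer search, exactly as the $[13,5,5]_2$-, $[14,5,6]_2$-, and $[16,5,7]_2$-matrices were used for $k=5$. The matching lower bounds for $d\le 16$ come partly from the known $n_2(6,d)$ and partly from the locality-specific bounds and the ILP.

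The hard part is the boundary pair $d=19,20$, where the claim asserts a deviation of exactly one from the Griesmer bound, namely $n_2(6,19,2)=g_2(6,19)+1=41$ and $n_2(6,20,2)=g_2(6,20)+1=42$. The upper bounds are routine: a realizable Solomon--Stifler type such as $[1;0,1,0,2,1]$ already yields a $[41,6,19]_2$-code with locality $2$, and an analogous type handles $d=20$. The difficulty is the lower bound, which demands that \emph{no} length-optimal $[40,6,19]_2$- or $[41,6,20]_2$-code carries locality $2$. The natural Griesmer-attaining type for $d=19$ is $[1;0,1,1,0,1]$, whose subtracted solid and plane cannot be made disjoint in $\PG(5,2)$, so that single construction fails outright; but excluding \emph{every} optimal code, not merely the Solomon--Stifler ones, is what makes this the main obstacle. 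I expect it to be resolved either by a rigidity argument for near-Griesmer codes (in the spirit of the parity-check and weight-distribution reasoning used to prove $n_2(5,7,2)=16$) or, more likely, by an infeasibility certificate from the locality-$2$ integer linear program.
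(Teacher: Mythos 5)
Your overall architecture matches the paper's: Solomon--Stifler types checked against Lemma~\ref{lemma_solomon_stifler_r_2} for the large-$d$ regime, Corollary~\ref{cor_d_2_r_2} and Lemma~\ref{lemma_d_4_construction} for $d\le 4$, Proposition~\ref{prop_reed_muller_parameters_exact} for $d=16$, explicit computer-found generator matrices for the intermediate cases, and lower bounds drawn from the known values of $n_2(6,d)$ supplemented by locality-specific ILP infeasibility for $d\in\{3,5,7,13,14,15\}$. However, two of your detailed diagnoses are wrong. First, $d=17$ and $d=18$ are \emph{not} ``lucky residues'' of the Solomon--Stifler machinery: the Griesmer-attaining type for $d=17$ is $[1;0,1,1,1,1]$, which requires a solid ($15$ points) and a plane ($7$ points) meeting every point at most $\sigma=1$ times, i.e.\ disjoint; since their vector dimensions satisfy $4+3>6$ they must intersect in $\PG(5,2)$, so this type (and likewise the one for $d=18$) is unrealizable at $\sigma=1$. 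The paper instead exhibits explicit $[37,6,17]_2$- and $[38,6,18]_2$-generator matrices with locality $2$ (the latter code being unique up to isomorphism, with the former obtained by shortening). Your proposal as written would leave these two values unproved unless you fold them into the ``explicit matrices'' bucket, which you explicitly did not.

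Second, you misidentify $d\in\{19,20\}$ as the hard part requiring a rigidity or ILP argument excluding locality-$2$ codes of Griesmer length. In fact $n_2(6,19)=41=g_2(6,19)+1$ and $n_2(6,20)=42=g_2(6,20)+1$ already hold for \emph{unrestricted} binary codes (classical results for $k\le 7$), so no $[40,6,19]_2$- or $[41,6,20]_2$-code exists at all and the lower bounds follow from the trivial chain $n_2(6,d,2)\ge n_2(6,d)$ with no locality-specific work; this is exactly how the paper dispatches them, alongside the matching upper bounds from explicit codes (e.g.\ shortenings of the seven $[43,7,20]_2$-codes). Your proposed ILP fallback would of course still certify these bounds, so neither error is fatal to the result, but the effort you allocate is misplaced: the genuinely locality-specific lower bounds in this theorem are the ILP certificates for $d\in\{3,5,7,13,14,15\}$, where $n_2(6,d,2)$ strictly exceeds $n_2(6,d)$, not the pair $d=19,20$.
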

\begin{proof}
  For $d\ge 21$ we can consider the Solomon-Stifler construction and use Lemma~\ref{lemma_solomon_stifler_r_2} to check that all those examples have locality $r=2$ (and in some cases even locality $r=1$). 
  Proposition~\ref{prop_reed_muller_parameters_exact} yields $n_2(6,16,2)=33$ and Corollary~\ref{cor_d_2_r_2} yields $n_2(6,1,2)=n_2(6,2,2)=9$. Lemma~\ref{lemma_d_4_construction} 
  gives $n_2(6,3,2)\le n_2(6,4,2)\le 12$.
  
  The generator matrices
  \begin{eqnarray*}
    &&
    \begin{pmatrix}
      111111100100000\\
      000111110010000\\
      011001101001000\\
      100011101000100\\
      101110010000010\\
      001110101000001
    \end{pmatrix}\!\!,\quad
    \begin{pmatrix}
      111111111110100000\\
      000001111111010000\\
      001110001111001000\\
      010110110011000100\\
      111000010111000010\\
      011011100101000001
    \end{pmatrix}\!\!,\quad
    \begin{pmatrix}
      1111111111111110100000\\
      0000000111111111010000\\
      0001111000011111001000\\
      0110011001100111000100\\
      1000101011111000000010\\
      0011100110101010000001
    \end{pmatrix}\!\!,\quad\\
    &&
    \begin{pmatrix}
      11111111111000000100000\\
      00000111111111110010000\\
      00111001111001111001000\\
      01011010001010111000100\\
      11101100111111011000010\\
      11110011011111101000001
    \end{pmatrix}\!\!,\quad 
    \begin{pmatrix}
      1111111111111110000100000\\
      0000000111111111110010000\\
      0001111000011110111001000\\
      0110011001100111011000100\\
      1010111110101011111000010\\
      0100101010111101001000001\\
    \end{pmatrix}\!\!,\quad
    \begin{pmatrix}
      11111111111111100000100000\\
      00000001111111111100010000\\
      00011110000111101110001000\\
      01100110011001101101000100\\
      10001010101000111111000010\\
      00111101001010110101000001
    \end{pmatrix}\!\!,\quad\\ 
    &&
    \begin{pmatrix}
      111111111111111111111110100000\\
      000000000001111111111111010000\\
      000001111110000001111111001000\\
      001110001110001110001111000100\\
      010110110010110010010110000010\\
      100011010110010110101010000001
    \end{pmatrix}\!\!,\quad
    \begin{pmatrix}
      1111111111111111111110000100000\\
      0000000001111111111111110010000\\
      0000011110000001111110111001000\\
      0011100110001110001111111000100\\
      0101111000110010010111001000010\\
      0110001011001110110010011000001
    \end{pmatrix}\!\!,\quad\\
    &&
    \begin{pmatrix}
      1111111111111111111000000000000100000\\
      0000000000111111111111111110000010000\\
      0000011111000011111000011111110001000\\
      0011100011001100111001100110111000100\\
      0100101101010101001010101011011000010\\
      1001100110100101010100101101101000001
    \end{pmatrix}\!\!,\quad
    \begin{pmatrix} 
      11111111111111111110000000000000100000\\
      00000000011111111111111111110000010000\\
      00001111100001111110000111111110001000\\
      00110001100110001110011001110111000100\\
      01010010101010110010101010111011000010\\
      10010111001101010111110100011101000001
    \end{pmatrix}\!\!,\quad\\ 
    &&
    \begin{pmatrix}
      11111111111111111111111000000000000100000\\
      00000000000001111111111111111111000010000\\
      00000001111110000111111000001111111001000\\
      00011110001110011000111001110011011000100\\
      01100110110010101001011010110101101000010\\
      10101010010100110010001111011001110000001
    \end{pmatrix}\!\!,\quad
    \begin{pmatrix}
      111111111111111111111110000000000000100000\\
      000000000001111111111111111111111100010000\\
      000001111110000001111110000011111110001000\\
      001110001110000110011110011100011101000100\\
      010110010010011011100110100100101111000010\\
      111010110100101110101011101101010111000001
    \end{pmatrix}  
  \end{eqnarray*}
  give 
  $[15,6,6]_2$-, $[18,6,8]_2$-, $[22,6,9]_2$-, $[23,6,10]_2$-, $[25,6,11]_2$-, $[26,6,12]_2$-, $[30,6,13]_2$-, $[31,6,14]_2$-, $[37,6,17]_2$-, $[38,6,18]_2$-, $[41,6,19]_2$-, 
  and $[42,6,20]_2$-codes with locality $r=2$, so that we have 
  $n_2(6,5,2)\le n_2(6,6,2)\le 15$, $n_2(6,7,2)\le n_2(6,8,2)\le 18$, $n_2(6,9,2)\le 22$, 
  $n_2(6,10,2)\le 23$, $n_2(6,11,2)\le 25$, $n_2(6,12,2)\le 26$, $n_2(6,13,2)\le 30$, $n_2(6,14,2)\le 31$, $n_2(6,17,2)\le 37$, $n_2(6,18,2)\le 38$, $n_2(6,19,2)\le 41$, and $n_2(6,20,2)\le 42$. 
  For the lower bounds we have $n_2(6,6,2)\ge n_2(6,6)=15$, $n_2(6,8,2)\ge n_2(6,8)=18$, $n_2(6,9,2)\ge n_2(6,9)=22$, $n_2(6,10,2)\ge n_2(6,10)=23$, $n_2(6,11,2)\ge n_2(6,11)=25$, $n_2(6,12,2)\ge n_2(6,12)=26$,
  $n_2(6,17,2)\ge n_2(6,17)=37$, $n_2(6,18,2)\ge n_2(6,18)=38$, $n_2(6,19,2)\ge n_2(6,19)=41$, and $n_2(6,20,2)\ge n_2(6,20)=42$. Due to length restrictions we deduce the lower bounds 
  $n_2(6,4,2)\ge n_2(6,3,2)\ge 12$, $n_2(6,5,2)\ge 15$, $n_2(6,7,2)\ge 18$, $n_2(6,13,2)\ge 30$, $n_2(6,14,2)\ge 31$, and $n_2(6,15,2)\ge 33$ from ILP computations.         

\end{proof}
We remark that our stated $[18,6,8]_2$-code is a projective two-weight code. It belongs to the family of BY codes \cite{bierbrauer1997family}. A $[26,6,12]_2$-code with locality $r=2$ 
can be obtained by shortening the unique $[27,7,12]_2$-code.\footnote{Uniqueness was e.g.\ computationally verified in \cite{bouyukliev2001optimal}. For a purely theoretic argument see 
{\lq\lq}The uniqueness of the binary linear $[27,7,12]$ code{\rq\rq} by A. E.~Brouwer from April 1992, available at https://www.win.tue.nl/$\sim$aeb/preprints.html.} There is a unique 
$[38, 6, 18]_2$-code, see e.g.\ \cite{bouyukliev2001optimal}, shortening gives $[37,6,17]_2$-codes with locality $r=2$. Note that both codes attain the Griesmer bound, see e.g.\ 
\cite{helleseth1983new} for constructions of binary codes attaining the Griesmer bound when $d>2^{k-1}$. We remark that there are $7$ non-isomorphic $[43,7,20]_2$ codes, 
see e.g.\ \cite{bouyukliev2001optimal}, and all their shortenings yield $[42,6,20]_2$-codes with locality $r=2$. 

\begin{theorem}
  \label{thm_m_q_2_k_7_r_2}
  We have $n_2(7,1,2)=n_2(7,2,2)=11$, $n_2(7,3,2)=n_2(7,4,2)=14$, $n_2(7,5,2)=17$, $n_2(7,6,2)=18$, $n_2(7,7,2)=n_2(7,8,2)=20$, $n_2(7,9,2)=24$, $n_2(7,10,2)=25$, $n_2(7,11,2)=27$, 
  $n_2(7,12,2)=28$, $n_2(7,29,2)=62$, $n_2(7,30,2)=63$, $n_2(7,31,2)=n_2(7,32,2)=65$ and $n_2(7,d,2)=n_2(7,d)$ for all other values of $d$.    
\end{theorem}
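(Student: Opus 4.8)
The plan is to reuse, essentially verbatim, the machinery assembled for $k\in\{3,4,5,6\}$ in Theorems~\ref{thm_m_q_2_k_3_r_2}--\ref{thm_m_q_2_k_6_r_2}, partitioning the admissible distances into a large-distance regime handled by general constructions, a few structured special values handled by named lemmas, and a middle band of small distances handled by explicit generator matrices with matching computational lower bounds. Throughout, the trivial inequality $n_2(7,d,2)\ge n_2(7,d)$ together with the completely known function $n_2(7,\cdot)$ from \cite{baumert1973note,van1981smallest} supplies the lower bound for every distance outside the exceptional set $\{1,\dots,12\}\cup\{29,30,31,32\}$, so that on the complement the task reduces to producing, for each such $d$, a single optimal $[n_2(7,d),7,d]_2$-code that happens to carry locality $2$.

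For the \textbf{upper bounds} I would first dispatch the large distances: for those $d$ with $n_2(7,d)=g_2(7,d)$ I exhibit a Solomon--Stifler construction of the appropriate type in $\PG(6,2)$ attaining the Griesmer bound and certify locality $r=2$ through Lemma~\ref{lemma_solomon_stifler_r_2} (for large $\sigma$ its hypothesis $\sum_i \varepsilon_i\gaussm{i+1}{1}{2}<\sigma\gaussm{6}{1}{2}$ is automatic), while for the finitely many non-exceptional $d\ge 13$ where the Griesmer bound is not met I list an explicit optimal generator matrix and check locality via Lemma~\ref{lemma_locality_2}. The Reed--Muller value $d=32=2^{5}$ is pinned by Proposition~\ref{prop_reed_muller_parameters_exact}, giving $n_2(7,32,2)=2^{6}+1=65$, whence $n_2(7,31,2)\le n_2(7,32,2)=65$ since minimum distance at least $32$ forces minimum distance at least $31$. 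The endpoints $d\in\{1,2\}$ come from Corollary~\ref{cor_d_2_r_2} and $d\in\{3,4\}$ from Lemma~\ref{lemma_d_4_construction} with $2t+1=7$; the remaining small and top-end exceptional values $d\in\{5,\dots,12,29,30\}$ I would cover by listing explicit $[n,7,d]_2$-codes of locality $2$, exactly as in the proofs of Theorems~\ref{thm_m_q_2_k_5_r_2} and \ref{thm_m_q_2_k_6_r_2}.

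The exceptional \textbf{lower bounds} are where the real work sits. The two endpoints are clean: $n_2(7,1,2)=n_2(7,2,2)\ge\left\lceil 3\cdot 7/2\right\rceil=11$ is Proposition~\ref{prop_r_1_2_without_d}, and $n_2(7,32,2)\ge 2^{6}+1=65$ is Lemma~\ref{lemma_reed_muller_parameters}. For $d=31$ I would run the parity-check argument already used for $n_2(5,7,2)$: adjoining a parity check to a hypothetical $[63,7,31]_2$-code of locality $2$ yields the unique $[64,7,32]_2$-code (first-order Reed--Muller, the Solomon--Stifler code of type $[1;1,0,0,0,0,0]$), which is projective and whose support contains no full line, since $\langle 1,u\rangle+\langle 1,v\rangle=\langle 0,u+v\rangle$ is never again a point of the affine frame; pulling this structure back to the shorter code forbids locality $2$ and gives $n_2(7,31,2)\ge 64$. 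All of the truly binding inequalities for $d\in\{3,\dots,12\}\cup\{29,30\}$, together with the final exclusion needed at $d=31$, I would certify by the locality-$2$ integer linear program of Section~\ref{sec_geometric}, precisely as the ``from ILP computations'' bounds in Theorem~\ref{thm_m_q_2_k_6_r_2}.

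The hard part will be the top of the range. The parity-check trick delivers only $n_2(7,31,2)\ge 64$, one short of the claimed value, so the crux is excluding a $[64,7,31]_2$-code of locality $2$: here Inequality~(\ref{eq_point_mult_ub}) already forces every point multiplicity to be at most $2$ (since $33\cdot 63\ge 64\cdot 31+32\cdot\cM(P)$ yields $\cM(P)\le 2$), but the residual case analysis does not appear finishable by hand, and I expect to fall back on a dedicated ILP run or on enumerating the possible $[65,7,32]_2$ parity-extensions. The analogous gaps at $d\in\{29,30\}$ are delicate for the same reason: one must certify that the locality-$2$ minimum strictly exceeds the unconstrained optimum $n_2(7,d)$, something the Griesmer bound cannot detect. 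Beyond these, the principal burden is simply the sheer number of explicit generator matrices and ILP certificates required to settle every distance between $5$ and $32$.
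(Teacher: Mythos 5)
Your proposal follows essentially the same route as the paper: upper bounds from Corollary~\ref{cor_d_2_r_2}, Lemma~\ref{lemma_d_4_construction}, Proposition~\ref{prop_reed_muller_parameters_exact}, Solomon--Stifler constructions certified by Lemma~\ref{lemma_solomon_stifler_r_2} for large $d$, and explicit optimal generator matrices for the middle range, with all binding exceptional lower bounds ultimately delegated to ILP computations exactly as the paper does. Your partial parity-check argument giving $n_2(7,31,2)\ge 64$ is a correct extra observation (the paper uses that trick only for $k=5$), but as you note it does not close the gap to $65$, so it does not change the overall structure of the proof.
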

\begin{proof}
  Corollary~\ref{cor_d_2_r_2} yields $n_2(7,1,2)=n_2(7,2,2)=11$, Lemma~\ref{lemma_d_4_construction} yields $n_2(7,3,2)\le n_2(7,4,2)\le 14$, and Proposition~\ref{prop_reed_muller_parameters_exact} 
  yields $n_2(6,32,2)=65$. For $45\le d\le 64$ and for $d\ge 73$ we can consider the Solomon-Stifler construction and use Lemma~\ref{lemma_solomon_stifler_r_2} to check that all those examples 
  have locality $r=2$ (and in some cases even locality $r=1$). The Magma function $\operatorname{BKLC}(q,n,k)$ (Best Known Linear Codes)  yields for small parameters $q$, $k$, and $n$ an 
  $[n,k,d]_q$-code that maximizes the minimum distance $d$ \cite{MR1484478}. Using $q=2$, $n=n_2(7,d)$, and $k=7$ we obtained a series of codes, one for each value of $d$, that we can check for 
  locality $r\le 2$. This check was successful for $d\in\{14,\dots,16\}\cup\{20,\dots,24\}\cup\{27,28\}\cup\{33,\dots,44\}\cup\{65,\dots,72\}$.
  
  The generator matrices
  \begin{eqnarray*}
    &&
    \begin{pmatrix}
      11111110001000000\\
      00011111100100000\\
      01100110010010000\\
      10100110100001000\\
      11101111100000100\\
      10011011000000010\\
      01111111110000001
    \end{pmatrix}\!\!,\quad
    \begin{pmatrix}
      111111111001000000\\
      000000111110100000\\
      000111111010010000\\
      011001011000001000\\
      011111101110000100\\
      001111100000000010\\
      011011000010000001
    \end{pmatrix}\!\!,\quad
    \begin{pmatrix}
      11111111111001000000\\
      00000111111100100000\\
      00011001111010010000\\
      00101110011010001000\\
      00111010101100000100\\
      01101111000100000010\\
      10101001011100000001
    \end{pmatrix}\!\!,\quad\\ 
    && 
    \begin{pmatrix}
      111111111110000001000000\\
      000001111111111100100000\\
      001110000110011110010000\\
      010010011110100110001000\\
      110111100111101110000100\\
      001101101001011110000010\\
      001110011011001000000001
    \end{pmatrix}\!\!,\quad
    \begin{pmatrix}
      1111111111111100001000000\\
      0000000111111111100100000\\
      0001111000111101110010000\\
      0010011011000111110001000\\
      0110100101011100110000100\\
      0111001000101010110000010\\
      1100111000110010010000001
    \end{pmatrix}\!\!,\quad\\ 
    && 
    \begin{pmatrix}
      111111111111111000001000000\\
      000000000111111111110100000\\
      000011111000111001110010000\\
      001100111011001010110001000\\
      110001011001110111000000100\\
      110110011000011010010000010\\
      011110101110110011110000001
    \end{pmatrix}\!\!,\quad
    \begin{pmatrix}
      1111111111111110000001000000\\
      0000000001111111111100100000\\
      0000111110001110011110010000\\
      0111000110010110101110001000\\
      1011011001100011110000000100\\
      0101101011100100110100000010\\
      1001001110111000111000000001
    \end{pmatrix}\!\!,\quad\\
    && 
    \begin{pmatrix} 
      1111111111111110000000001000000\\
      0000000111111111111111000100000\\
      0000111000111110000111110010000\\
      0011011011000110011011010001000\\
      0111101101001010101100110000100\\
      1001011101011000011101000000010\\
      0010111011010011100101000000001
    \end{pmatrix}\!\!,\quad
    \begin{pmatrix}
      111111111111111111100000000000001000000\\
      000000000111111111111111111100000100000\\
      000011111000001111100001111111100010000\\
      001100111001110001100110011101110001000\\
      010101011010010110101110100110110000100\\
      011110001000111011011000101110000000010\\
      101010011100010101111011000110100000001
    \end{pmatrix}\!\!,\quad\\
 \end{eqnarray*} 
 \begin{eqnarray*}
    && 
    \begin{pmatrix}
      1111111111111111111000000000000001000000\\
      0000000001111111111111111111000000100000\\
      0000111110000011111000011111111100010000\\
      0011001110011100011001100011001110001000\\
      0101010110101101100010100101010110000100\\
      1001101010010100111111001000110010000010\\
      1100001011001111001101010000101110000001
    \end{pmatrix}\!\!,\quad
    \begin{pmatrix}
      111111111111111111111100000000000001000000\\
      000000000000111111111111111111100000100000\\
      000000111111000011111100000111111100010000\\
      000111001111001100001100011001101110001000\\
      011011110011011101110111101011111110000100\\
      101111010101100100111101110110110110000010\\
      110101100011001110011010101100110100000001    
    \end{pmatrix}\!\!,\quad\\
  \end{eqnarray*}
  give $n_2(7,5,2)\le 17$, $n_2(7,6,2)\le 18$, $n_2(7,7,2)\le n_2(7,8,2)\le 20$, $n_2(7,9,2)\le 24$, $n_2(7,10,2)=25$, $n_2(7,11,2) \le 27$, $n_2(7,12,2)\le 28$, $n_2(7,13,2)\le 31$, 
  $n_2(7,17,2)\le 39$, $n_2(7,18,2)\le 40$, and $n_2(7,19,2)\le 42$. For the other cases we will state explicit examples by listing sets of points for the occurring non-zero multiplicities, 
  where the points are stated as integers whose base-2-representation is a generator of the point. 
  \begin{itemize}
    \item $d=55$: 1$\to\{1,2,4,7,8,9,12,13,16,19,21,22,25,27,32,35,37,38,39,41,44,47,49,50,55,56,59,61,64,$ $67,73,76,81,84,90,91,94,98,100,103,104,105,107,108,110,112,115,117,\dots,122\}$, 2$\to\{70\}$;
    \item $d=56$: 1$\to\{1,2,4,8,10,\dots,13,16,19,21,22,25,31,32,35,37,38,40,41,42,44,47,50,52,55,56,59,61,$ $62,64,67,69,74,76,80,82,84,87,91,93,94,98,100,103,110,111,112,115,117,121,122,125\}$, 3$\to\{97\}$;
    \item $d=62$: 1$\to\{1,2,4,5,8,\dots,11,13,14,16,\dots,19,21,22,28,30,32,35,36,37,40,43,44,46,49,51,52,55,$ $59,60,61,64,67,69,72,74,76,79,81,82,85,87,90,\dots,93,98,99,103,105,106,112,115,120,122,125,$\\ $126,127\}$, 2$\to\{101\}$;
    \item $d=63$: 1$\to\{1,3,4,7,8,11,13,15,16,17,20,21,26,27,29,30,32,33,40,42,45,46,49,50,54,55,56,59,$\\ $60,64,65,68,73,74,76,77,81,83,85,86,88,91,93,95,98,99,101,102,106,107,108,111,114,115,120,$\\ $125,126\}$, 2$\to\{2,36,116\}$;
    \item $d=65$: 1$\to\{1,2,4,7,8,11,13,14,16,19,21,22,25,26,28,31,32,35,37,38,41,42,44,47,49,50,52,55,$\\ $56,59,61,62,64,67,69,70,73,74,76,79,81,82,84,87,88,91,93,94,96,97,98,100,103,104,107,109,$\\ $110,112,115,117,118,121,122,124,127\}$.
  \end{itemize}
  The lower bounds in those cases where $n_2(7,d,2)>n_2(7,d)$ are verified using ILP computations.
\end{proof}

\begin{lemma}
  \label{lemma_constructions_r_1}
  We have $n_q(k,2d',1)\le 2\cdot n_q(k,d')$, $n_q(k,d'+2\cdot q^{k-1},1)\le 2\cdot \left(q^k-1\right)/(q-1)+n_q(k,d')$, and $n_q(k,3d'-1,1)\le 3\cdot n_q(k,d')-1$. 
\end{lemma}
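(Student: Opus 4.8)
The plan is to prove each of the three inequalities in Lemma~\ref{lemma_constructions_r_1} by an explicit geometric construction that takes a length-optimal $[n',k,d']_q$-code and builds from it a longer code with locality~$1$, i.e.\ one whose corresponding multiset of points in $\PG(k-1,q)$ has no point of multiplicity exactly $1$ (Lemma~\ref{lemma_locality_1}). In each case I would start from a multiset of points $\cM'$ of minimal cardinality $n'=n_q(k,d')$ realising distance $d'$, and combine it with a simplex-type ingredient (the all-ones multiset $\chi_V$, whose cardinality is $(q^k-1)/(q-1)$ and whose every hyperplane has multiplicity $(q^{k-1}-1)/(q-1)$). The distance of the new multiset is controlled through the hyperplane-multiplicity description $d(C)=|\cM|-\max_H\cM(H)$, so each verification reduces to bounding $\max_H\cM(H)$ for the constructed $\cM$.

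For the first inequality $n_q(k,2d',1)\le 2\cdot n_q(k,d')$, the plan is simply to take $\cM=2\cdot\cM'$, the double of an optimal distance-$d'$ multiset. Then $|\cM|=2n'$, every point multiplicity is doubled so no point has multiplicity $1$ (giving locality~$1$), and $\max_H\cM(H)=2\max_H\cM'(H)=2(n'-d')$, whence $d(\cM)=2n'-2(n'-d')=2d'$. For the second inequality $n_q(k,d'+2q^{k-1},1)\le 2(q^k-1)/(q-1)+n_q(k,d')$, I would set $\cM=\cM'+2\cdot\chi_V$, adding two copies of the full simplex multiset. Adding $2\chi_V$ raises every point multiplicity by at least $2$, so again no multiplicity equals $1$ and locality~$1$ holds; the cardinality is $n'+2(q^k-1)/(q-1)$, and since adding $2\chi_V$ raises every hyperplane multiplicity by exactly $2\cdot(q^{k-1}-1)/(q-1)$, the distance increases by $|2\chi_V|-2(q^{k-1}-1)/(q-1)=2q^{k-1}$, giving $d=d'+2q^{k-1}$.

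The third inequality $n_q(k,3d'-1,1)\le 3\cdot n_q(k,d')-1$ is the most delicate, and I expect it to be the main obstacle, because the $-1$ in both the length and the distance prevents the clean ``multiply everything'' argument. The natural attempt is to take $\cM=3\cdot\cM'$ and then delete one point, but one must check that after deletion no point is left with multiplicity exactly $1$ and that the distance drops by exactly the right amount. Starting from $3\cdot\cM'$ we have cardinality $3n'$, all multiplicities multiples of $3$, and $\max_H\cM(H)=3(n'-d')$; removing one unit of multiplicity from a single point $P$ (one that had multiplicity $\ge 3$, hence now $\ge 2$, preserving locality~$1$ everywhere) yields cardinality $3n'-1$. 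The delicate point is controlling the new maximum hyperplane multiplicity: for hyperplanes through $P$ the multiplicity drops to at most $3(n'-d')-1$, while for hyperplanes avoiding $P$ it stays $\le 3(n'-d')$, so $\max_H\cM(H)\le 3(n'-d')$ and thus $d(\cM)\ge 3n'-1-3(n'-d')=3d'-1$.

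The step I would scrutinise most carefully is this last distance bound, since it hinges on the existence of a point $P$ of positive multiplicity lying on some hyperplane of maximal multiplicity $n'-d'$ in the original optimal $\cM'$; if every maximal hyperplane avoided the deleted point the distance would not improve and the bound could fail. I would argue that one may always choose $P$ to be a point not contained in at least one maximal hyperplane (equivalently, pick a maximal hyperplane $H_0$ and delete multiplicity from a point $P\notin H_0$, which exists since $\cM'$ is spanning), guaranteeing that the deletion strictly lowers the count on every other maximal hyperplane through $P$ while leaving $H_0$'s contribution unchanged at the value that already realises the distance. Closing this existence argument cleanly, and confirming that the chosen $P$ retains multiplicity $\ge 2$ after deletion, is the crux of the proof.
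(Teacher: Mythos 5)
Your proposal is correct and matches the paper's proof essentially verbatim: the three constructions are $2\cdot\cM'$, $\cM'+2\cdot\chi_V$, and $3\cdot\cM'$ with one unit of point multiplicity removed. The concern you raise in your final paragraph is moot, because an $[n,k,d]_q$-code only needs minimum distance \emph{at least} $d$: the bound $\max_H\cM(H)\le 3(n'-d')$ that you already established gives $d\ge (3n'-1)-3(n'-d')=3d'-1$ no matter which point of positive multiplicity is reduced, so no careful choice of $P$ relative to the maximal hyperplanes is required.
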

\begin{proof}
  If $\cM$ is the multiset of points corresponding to an $[n',k,d']_q$-code, then $2\cdot \cM$ corresponds to an $[2n',k,2d']_q$-code with locality $1$ since each point has an 
  even multiplicity. Denoting the ambient space by $A$, we have that $\cM+2\cdot \chi_A$ corresponds to an $\left[2\cdot \left(q^k-1\right)/(q-1)+n',k,2\cdot q^{k-1}+d'\right]_q$-code 
  with locality $r=1$ since each point has multiplicity at least $2$.  Observing that $3\cdot \cM$ corresponds to an $[3n',k,3d']_q$-code with locality $1$ since each point has an 
  multiplicity that is divisible, we can decrease the point multiplicity of one point with positive multiplicity by one to obtain an $[3n'-1,k,3d'-1]_q$-code.
\end{proof}

\begin{theorem}
  \label{thm_m_q_2_k_3_r_1}
  We have $n_2(3,1,1)=n_2(3,2,1)=6$, $n_2(3,3,1)=n_2(3,4,1)=8$, $n_2(3,5,1)=11$, $n_2(3,6,1)=12$, $n_2(3,7,1)=14$, and $n_2(3,d,1)=n_2(3,d)$ for all $d\ge 8$. 
\end{theorem}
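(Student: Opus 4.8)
The plan is to carry out the whole argument geometrically inside the Fano plane $\PG(2,2)$, using Lemma~\ref{lemma_locality_1} (locality $1$ means every point of positive multiplicity has multiplicity $\ge 2$), the spanning condition (at least three occupied points), and the distance condition $\cM(L)\le n-d$ for each of the seven lines $L$. I would split into the small cases $1\le d\le 7$ and the generic range $d\ge 8$, and for the small range first collect the upper bounds from tools already at hand. For $d\in\{1,2\}$ the three-point configuration $2\chi_{\langle e_1\rangle}+2\chi_{\langle e_2\rangle}+2\chi_{\langle e_3\rangle}$ of Proposition~\ref{prop_r_1_2_without_d} has $n=6$, and since the three base points form a triangle every line meets it in at most two points, so $d=2$. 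For $d\in\{3,4\}$ I would double a $[4,3,2]_2$-code via the first bound of Lemma~\ref{lemma_constructions_r_1} to reach $n=8$, using monotonicity in $d$ for $d=3$; for $d=5$ the third bound of Lemma~\ref{lemma_constructions_r_1} with $d'=2$ gives $n\le 3n_2(3,2)-1=11$; for $d=6$ doubling a $[6,3,3]_2$-code gives $n\le 2n_2(3,3)=12$; and for $d=7$ monotonicity together with the $2$-fold simplex $[14,3,8]_2$ of Example~\ref{example_simplex_code} (which has locality $1$) gives $n\le n_2(3,8,1)\le 14$.

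For the matching lower bounds I would specialize the point-multiplicity inequalities~(\ref{eq_point_mult_ub}) and~(\ref{eq_point_mult_lb}), which for $k=3$, $q=2$ read $\cM(P)\le n-\tfrac{3d}{2}$ and $\cM(P)\ge 2d-n$. The cases $d\in\{1,2\}$ follow from $n_2(3,d,1)\ge n'_2(3,1)=6$. For $d\in\{3,4\}$ I would observe that any two distinct occupied points span a line of multiplicity at least $4$, so $n\ge d+4$; this already settles $d=4$, and for $d=3$ one rules out $n=7$ since three occupied points force every pairwise multiplicity sum to be $\le 4$, hence all equal to $2$ and $n=6\ne 7$, while four occupied points give $n\ge 8$. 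The sharp cases are $d=6,7$: at the Griesmer length $n=g_2(3,d)$ one has $2d-n\ge 1$, so~(\ref{eq_point_mult_lb}) forces all seven points to be occupied, hence (by locality) of multiplicity $\ge 2$, so $n\ge 14$, a contradiction. For $d=5$, $n=10$, inequality~(\ref{eq_point_mult_ub}) gives $\cM(P)\le 2$, so with locality every multiplicity lies in $\{0,2\}$ and exactly five points are occupied; the occupied set is then the complement of two points, and in $\PG(2,2)$ such a set still contains two full lines, each of multiplicity $6>n-d=5$ — a contradiction.

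For $d\ge 8$ the lower bound is immediate, $n_2(3,d,1)\ge n_2(3,d)=g_2(3,d)$, and I would supply one explicit locality-$1$ construction per residue class of $d$ modulo $4$, built on a simplex fold $t\chi_V$ (all seven points of multiplicity $t$, so locality $1$ is automatic once $t\ge 2$) plus a residue configuration chosen to keep the extra line-multiplicity minimal: a triangle for $d=4t+1$, a $4$-point hyperoval $\chi_V-\chi_L$ for $d=4t+2$, a point-complement $\chi_V-\chi_P$ for $d=4t+3$, and a further simplex copy for $d=4t+4$ (the $(t+1)$-fold simplex). Using $g_2(3,4t+j)=7t+g_2(3,j)$ one checks directly that each of these has exactly the Griesmer length and the claimed distance $4t+j$; the constraint $t\ge 2$ (respectively $t+1\ge 2$) translates into $d\ge 9,10,11$ in the first three classes and $d\ge 8$ in the last, so together these cover every $d\ge 8$, and the trivial lower bound forces equality.

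The step I expect to be genuinely delicate is not the construction side but the lower bounds in the three exactly-one-above-Griesmer cases $d\in\{5,6,7\}$, where the arithmetic of~(\ref{eq_point_mult_ub}) and~(\ref{eq_point_mult_lb}) must pin the multiset down to a rigid finite configuration in the Fano plane; the $d=5$ analysis — forcing exactly five occupied points and then exhibiting the two lines fully contained in a five-point set — is the representative hard point, while everything else reduces to the already-established lemmas or to routine bookkeeping with $g_2(3,\cdot)$.
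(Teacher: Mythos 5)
Your proposal is correct, and it is genuinely different from what the paper does: the paper's proof of Theorem~\ref{thm_m_q_2_k_3_r_1} is a two-line argument that obtains all lower bounds for $d\le 7$ from ILP computations and all upper bounds from Lemma~\ref{lemma_constructions_r_1} together with monotonicity in $d$, whereas you replace the computer search entirely by hand arguments in the Fano plane. Your upper bounds are essentially the paper's (doubling, tripling-minus-one, and the $2$-fold simplex all come from Lemma~\ref{lemma_constructions_r_1} and Example~\ref{example_simplex_code}), but your lower bounds are new content relative to the paper: the specializations $\cM(P)\le n-\tfrac{3d}{2}$ and $\cM(P)\ge 2d-n$ of Inequalities~(\ref{eq_point_mult_ub}) and~(\ref{eq_point_mult_lb}) for $k=3$, $q=2$, combined with Lemma~\ref{lemma_locality_1}, do pin down each critical case: for $d\in\{6,7\}$ at the Griesmer length all seven points are forced to be occupied and hence $n\ge 14$; for $d=5$, $n=10$ forces exactly five points of multiplicity $2$, whose support (the complement of two points) contains the two lines of $\PG(2,2)$ missing both deleted points, each of multiplicity $6>n-d$; and the $d=3$, $n=7$ case is correctly excluded by the parity/counting argument on three non-collinear occupied points. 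Your explicit $d\ge 8$ constructions ($t\cdot\chi_V$ plus a triangle, a hyperoval $\chi_V-\chi_L$, a point-complement $\chi_V-\chi_P$, or a further simplex copy, according to $d\bmod 4$) are exactly Solomon--Stiffler multisets of types $[t;0,\varepsilon_1,\varepsilon_0]$, and the residue-class thresholds $d\ge 8,9,10,11$ do jointly cover all $d\ge 8$; the paper instead leans on Lemma~\ref{lemma_constructions_r_1} plus monotonicity here. The trade-off is clear: the paper's route is shorter but not verifiable without rerunning the ILPs, while yours is longer but self-contained, human-checkable, and gives structural information (e.g., that the optimal $d=5$ configuration cannot exist at length $10$ for a concrete geometric reason). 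I found no gap; the only points worth writing out carefully in a final version are the ones you already flagged, namely that for $n=10$, $d=5$ the multiplicity bounds force \emph{exactly} five occupied points of multiplicity exactly $2$, and that a five-point subset of $\PG(2,2)$ obtained by deleting two points always contains exactly two full lines.
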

\begin{proof}
  The lower bounds for $d\le 7$ can be obtained by ILP computations. The necessary upper bounds can be deduced from Lemma~\ref{lemma_constructions_r_1} and $n_2(k,d-1,1)\le n_2(k,d,1)$.  
\end{proof}

\begin{theorem}
  \label{thm_m_q_2_k_4_r_1}
  We have $n_2(4,1,1)=n_2(4,2,1)=8$, $n_2(4,3,1)=n_2(4,4,1)=10$, $n_2(4,5,1)=n_2(4,6,1)=14$, $n_2(4,7,1)=n_2(4,8,1)=16$, $n_2(4,9,1)=20$, $n_2(4,10,1)=22$, $n_2(4,11,1)=23$, $n_2(4,12,1)=24$, 
  $n_2(4,13,1)=27$, $n_2(4,14,1)=28$, $n_2(4,15,1)=30$, $n_2(4,d,1)=n_2(4,d)$ for all $d\ge 16$. 
\end{theorem}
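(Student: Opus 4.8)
The plan is to obtain every upper bound from the three recursive constructions of Lemma~\ref{lemma_constructions_r_1} together with the monotonicity $n_2(4,d-1,1)\le n_2(4,d,1)$ of Lemma~\ref{lemma_monotone}, and to obtain the lower bounds from Proposition~\ref{prop_r_1_2_without_d} and integer linear programming. For brevity I call the three bounds of Lemma~\ref{lemma_constructions_r_1}, specialized to $q=2$, $k=4$, by (A) $n_2(4,2d',1)\le 2n_2(4,d')$, (B) $n_2(4,d'+16,1)\le 30+n_2(4,d')$, and (C) $n_2(4,3d'-1,1)\le 3n_2(4,d')-1$. Throughout I use that $n_2(4,d)=g_2(4,d)$ for all $d\ge 1$ (recorded after Theorem~\ref{thm_m_q_2_k_4_r_2}) together with the trivial bound $n_2(4,d,1)\ge n_2(4,d)=g_2(4,d)$.

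First I would dispose of the range $d\ge 16$. Bound (A) with $d'=8$ gives $n_2(4,16,1)\le 2n_2(4,8)=30=g_2(4,16)$. For $d\ge 17$ I write $d=d'+16$ with $d'\ge 1$ and apply (B) to get $n_2(4,d,1)\le 30+g_2(4,d')$. Since $\lceil d/2^i\rceil=\lceil d'/2^i\rceil+16/2^{i}$ for $0\le i\le 3$, we have $g_2(4,d)=g_2(4,d')+30$, so the upper bound is exactly $g_2(4,d)$ and coincides with the trivial lower bound; hence $n_2(4,d,1)=n_2(4,d)$ for every $d\ge 16$.

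For $d\le 15$ the even values are immediate from (A): taking $d'=1,\dots,7$ yields $n_2(4,2,1)\le 8$, $n_2(4,4,1)\le 10$, $n_2(4,6,1)\le 14$, $n_2(4,8,1)\le 16$, $n_2(4,10,1)\le 22$, $n_2(4,12,1)\le 24$, and $n_2(4,14,1)\le 28$. The odd values $d\in\{1,3,5,7,15\}$ then follow by monotonicity from the neighbouring even case (resp.\ from the $d=16$ case for $d=15$), and $d=11$ is covered by (C) with $d'=4$, giving $n_2(4,11,1)\le 23$. The two remaining values $d=9$ and $d=13$ are not of the form $2d'$ or $3d'-1$ (indeed $3d'-1\equiv 2\pmod 3$, while $9\equiv 0$ and $13\equiv 1$), so none of (A), (B), (C) applies and monotonicity only delivers the weaker bounds $22$ and $28$. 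For these I would supply explicit $[20,4,9]_2$- and $[27,4,13]_2$-codes of locality $1$, most conveniently located by computer search and displayed as generator matrices, exactly as in the locality-$2$ theorems.

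It remains to certify the lower bounds. For $d\in\{1,2\}$ Proposition~\ref{prop_r_1_2_without_d} gives $n_2(4,d,1)\ge n'_2(4,1)=8$, which matches. For each $3\le d\le 15$ the asserted value strictly exceeds $g_2(4,d)$, so the Griesmer bound is insufficient and I would verify the value as the optimum of the locality-$1$ integer linear program of the associated Proposition; a few of these admit a direct argument instead, for instance $n_2(4,8,1)\ge 16$ because (invoking uniqueness) the only $[15,4,8]_2$-code is the simplex, all of whose points have multiplicity $1$. I expect the genuine difficulty to lie exactly in this block $3\le d\le 15$: the upper bounds for $d=9,13$ escape Lemma~\ref{lemma_constructions_r_1} and require tailor-made codes, while all the corresponding lower bounds sit above the Griesmer bound and hence rest on the finite but nontrivial ILP search rather than on a closed-form estimate.
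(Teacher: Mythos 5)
Your proposal follows essentially the same route as the paper's proof: the upper bounds are deduced from Lemma~\ref{lemma_constructions_r_1} plus the monotonicity $n_2(k,d-1,1)\le n_2(k,d,1)$, the two exceptional cases $d=9$ and $d=13$ are correctly isolated and settled by exhibiting explicit $[20,4,9]_2$- and $[27,4,13]_2$-codes of locality $1$ (the paper writes these as concrete multisets of points, e.g.\ doubling the complement of a plane resp.\ a line and adding a few extra points, rather than computer-found generator matrices), and the lower bounds for $d\le 15$ rest on ILP computations. The only cosmetic differences are that you invoke Proposition~\ref{prop_r_1_2_without_d} for $d\in\{1,2\}$ where the paper simply subsumes these under the ILP, and that you spell out the $d\ge 16$ bookkeeping that the paper leaves implicit.
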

\begin{proof}
  The lower bounds for $d\le 15$ can be obtained by ILP computations. Except $d=9$ and $d=13$, the necessary upper bounds can be deduced from Lemma~\ref{lemma_constructions_r_1} and 
  $n_2(k,d-1,1)\le n_2(k,d,1)$. For $d=9$ let $A$ denote the ambient space and $E:=\left\langle e_1,e_2,e_3\right\rangle$. A suitable $[19,4,9]_2$-code with locality $r=1$ can be 
  obtained from the multiset of points $2\cdot\chi_{A\backslash E} +\chi_{\left\langle e_4\right\rangle}  +\chi_{\left\langle e_4+e_1\right\rangle}+\chi_{\left\langle e_4+e_2\right\rangle} 
  +\chi_{\left\langle e_4+e_3\right\rangle}$. For $d=13$ let $A$ denote the ambient space and $E:=\left\langle e_1,e_2\right\rangle$. A suitable $[27,4,13]_2$-code with locality $r=1$ can be 
  obtained from the multiset of points $2\cdot\chi_{A\backslash L} +\chi_{\left\langle e_3\right\rangle}  +\chi_{\left\langle e_3+e_1\right\rangle}+\chi_{\left\langle e_3+e_2\right\rangle}$.
\end{proof}

\begin{theorem}
  \label{thm_m_q_2_k_5_r_1}
  We have 
  \begin{center}
    \begin{tabular}{cccccccccccccccccc}
      \hline
      $d$          &  1 &  2 &  3 &  4 &  5 &  6 &  7 &  8 &  9 & 10 & 11 & 12 & 13 & 14 & 15 & 16 & 17 \\ 
      $n_2(5,d,1)$ & 10 & 10 & 12 & 12 & 17 & 18 & 19 & 20 & 24 & 25 & 27 & 28 & 30 & 30 & 32 & 32 & 37 \\
      \hline
    \end{tabular}
    
    \smallskip
    
    \begin{tabular}{ccccccccccccccccc}
      \hline
      $d$          & 18 & 19 & 20 & 21 & 22 & 23 & 24 & 25 & 26 & 27 & 28 & 29 & 30 & 31 & 36 & 38 \\
      $n_2(5,d,1)$ & 38 & 41 & 42 & 45 & 46 & 47 & 48 & 52 & 54 & 55 & 56 & 59 & 60 & 62 & 72 & 76\\
      \hline
    \end{tabular}
   \end{center}
   for small values of $d$ and $n_2(5,d,1)=n_2(5,d)$ in all other cases.
\end{theorem}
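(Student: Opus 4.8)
The plan is to follow the two-pronged scheme already used for Theorems~\ref{thm_m_q_2_k_3_r_1} and~\ref{thm_m_q_2_k_4_r_1}: obtain every lower bound either from the trivial inequality $n_2(5,d,1)\ge n_2(5,d)$ or, for the entries listed explicitly in the table, from the locality-$1$ ILP solved over $\PG(4,2)$, and then match these from above by a short catalogue of constructions. For each $d$ claimed to satisfy $n_2(5,d,1)=n_2(5,d)$ the lower bound is immediate, since the values $n_2(5,d)$ are known (indeed $n_2(5,d)=g_2(5,d)$ for all $d\notin\{3,4,5,6\}$); thus the only genuine lower-bound work is a finite ILP computation for the finitely many table entries where strict inequality is expected.

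For the upper bounds I would first dispose of all large $d$. Adding $2\cdot\chi_A$, with $A$ the ambient space, to any optimal $[n_2(5,d),5,d]_2$-code yields a locality-$1$ code by Lemma~\ref{lemma_locality_1}, and by the middle construction of Lemma~\ref{lemma_constructions_r_1} it raises the distance by $2\cdot 2^{4}=32$ and the length by $2\cdot\gaussm{5}{1}{2}=62$. Since $g_2(5,d+32)=g_2(5,d)+62$ and $n_2(5,d)=g_2(5,d)$ for $d\ge 7$, this forces $n_2(5,d+32,1)=g_2(5,d+32)$, i.e.\ equality for all $d\ge 39$. The same recipe applied to the trivial optimal codes for $d'\in\{1,2\}$ settles $d\in\{33,34\}$, the $2$-fold simplex code of Example~\ref{example_simplex_code} settles $d=32$, and applied to an optimal $[14,5,6]_2$-code it even reproduces the (strict) value $n_2(5,38,1)=76$.

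It then remains to treat the fundamental domain $1\le d\le 37$. Here I would combine the doubling bound $n_2(5,2d',1)\le 2n_2(5,d')$, the triple-minus-one bound $n_2(5,3d'-1,1)\le 3n_2(5,d')-1$, and the trivial monotonicity $n_2(5,d-1,1)\le n_2(5,d,1)$, supplemented by Solomon--Stiffler configurations whose subtracted subspaces share a common flat, so that every overlap multiplicity is $0$ rather than $1$ and the resulting code has locality $1$. The two smallest cases reduce to $n_2(5,1,1)=n_2(5,2,1)=2\cdot 5=10$ via Proposition~\ref{prop_r_1_2_without_d}. Whatever these generic recipes miss I would supply by listing explicit generator matrices or multisets of points, in the style of Theorems~\ref{thm_m_q_2_k_5_r_2}--\ref{thm_m_q_2_k_7_r_2}, verifying cardinality, minimum distance, and the all-multiplicities-$\ge 2$ condition directly.

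The crux is precisely this residue of exceptional values, and the obstruction is sharp: the generic bounds of Lemma~\ref{lemma_constructions_r_1} overshoot exactly when the natural ``parent'' distance $d'$ lands in the irregular range $\{3,4,5,6\}$ on which $n_2(5,d')>g_2(5,d')$. This is why cases such as $d=35$ and $d=37$, although they attain $n_2(5,d,1)=g_2(5,d)$, together with the one-over-Griesmer case $d=36$ and small anomalies like $d=6,7,8$, cannot be reached by doubling or by adding $2\cdot\chi_A$ and instead require locality-$1$ codes exhibited or found by computer search. The matching difficulty on the lower-bound side is to certify optimality of the ILP solutions for the strictly-greater entries, since these are the only assertions not inherited from the previously tabulated function $n_2(5,\cdot)$.
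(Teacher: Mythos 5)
Your proposal follows essentially the same route as the paper: lower bounds certified by ILP computations over $\PG(4,2)$ for the finitely many exceptional entries (with $n_2(5,d,1)\ge n_2(5,d)$ handling the rest), and upper bounds obtained from the constructions of Lemma~\ref{lemma_constructions_r_1} together with monotonicity, falling back on explicitly listed multisets of points for the residual cases. The paper's proof is exactly this, with the explicit examples given for $d\in\{5,7,9,10,11,17,18,19,21,23,25,27,29,35,37\}$.
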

\begin{proof}
  The lower bounds for $d\le 38$ can be obtained by ILP computations. The necessary upper bounds can almost always be deduced from the constructions in Lemma~\ref{lemma_constructions_r_1} 
  and $n_2(k,d-1,1)\le n_2(k,d,1)$. For the other cases we will state explicit examples by listing sets of points for the occurring non-zero multiplicities, where the points are stated as
  integers whose base-2-representation is a generator of the point. 
  \begin{itemize}
    \item $d=5$: 2$\to\{1,2 , 8, 16,23,29,30\}$, 3$\to\{4\}$;
    \item $d=7$: 2$\to\{2,4,8,14,16,23,26,29\}$, 3$\to\{1\}$;    
    \item $d=9$: 2$\to\{1,2,15,16,23,27\}$, 3$\to\{4,8,29,30\}$;    
    \item $d=10$: 2$\to\{1,2,4,8,16\}$, 3$\to\{15,23,27,29,30\}$;
    \item $d=11$: 2$\to\{1,2,4,5,8,15,16,17,27,28,29,30\}$, 3$\to\{23\}$;
    \item $d=17$: 2$\to\{2,4,7,8,11,14,16,21,22,25,31\}$, 3$\to\{1,13,19,26,28\}$;
    \item $d=18$: 2$\to\{7,11,13,14,19,21,22,25,26,28\}$, 3$\to\{1,2,4,8,16,31\}$;
    \item $d=19$: 2$\to\{1,2,4,8,11,16,28\}$, 3$\to\{7,13,14,19,21,22,25,26,31\}$;
    \item $d=21$: 2$\to\{8,13,31\}$, 3$\to\{1,2,4,7,11,14,16,19,21,22,25,26,28\}$;
    \item $d=23$: 2$\to\{8\}$, 3$\to\{1,2,4,7,11,13,14,16,19,21,22,25,26,28,31\}$;
    \item $d=25$: 2$\to\{0,1,2,3,4,6,7,8,10,13,14,15,16,21,22,25,26,27,28,30\}$, 3$\to\{13,19,21,25\}$;
    \item $d=27$: 2$\to\{1,2,4,7,8,11,13,14,16,18,19,21,22,25,26,28,31\}$, 3$\to\{3,5,9,15,20,24,30\}$;
    \item $d=29$: 2$\to\{1,2,3,4,5,7,8,10,11,12,13,14,16,17,18,20,22,23,24,25,27,28,29,30,31\}$,\\ 3$\to\{6,9,19\}$;
    \item $d=35$: 2$\to\{3,5,6,7,9,10,11,12,13,14,17,18,19,20,21,22,24,25,26,28\}$,\\ 3$\to\{1,2,4,8,15,16,23,27,29,30\}$;
    \item $d=37$: 2$\to\{1,4,5,6,8,9,11,12,18,19,22,23,26,27,30,31\}$,\\ 3$\to\{2,3,7,10,14,15,16,17,20,21,24,25,28,29\}$.
  \end{itemize}
\end{proof}

\begin{theorem}
  \label{thm_m_q_2_k_6_r_1}
  We have 
  \begin{center}
    \begin{tabular}{cccccccccccccccccc}
      \hline
      $d$          &  1 &  2 &  3 &  4 &  5 &  6 &  7 &  8 &  9 & 10 & 11 & 12 & 13 & 14 & 15 & 16 & 17 \\ 
      $n_2(6,d,1)$ & 12 & 12 & 14 & 14 & 20 & 20 & 22 & 22 & 27 & 28 & 30 & 30 & 33 & 34 & 35 & 36 & 41 \\
      \hline
    \end{tabular}
    
    \smallskip
    
    \begin{tabular}{cccccccccccccccccc}
      \hline
      $d$          & 18 & 19 & 20 & 21 & 22 & 23 & 24 & 25 & 26 & 27 & 28 & 29 & 30 & 31 & 32 & 33 & 34 \\
      $n_2(6,d,1)$ & 42 & 45 & 46 & 49 & 50 & 51 & 52 & 57 & 58 & 59 & 60 & 62 & 62 & 64 & 64 & 70 & 72 \\
      \hline
    \end{tabular}
    
    \smallskip
    
    \begin{tabular}{cccccccccccccccccc}
      \hline
      $d$          & 35 & 36 & 37 & 38 & 39 & 40 & 41 & 42 & 43 & 44 & 45 & 46 & 47 & 48 &  49 &  50 &  51 \\
      $n_2(6,d,1)$ & 74 & 76 & 78 & 80 & 81 & 82 & 86 & 88 & 90 & 90 & 93 & 94 & 95 & 96 & 101 & 102 & 105 \\
      \hline
    \end{tabular}
    
    \smallskip
    
    \begin{tabular}{cccccccccccccccccc}
      \hline
      $d$          &  52 &  53 &  54 &  55 &  56 &  57 &  58 &  59 &  60 &  61 &  62 &  63 &  64 &  71 \\
      $n_2(6,d,1)$ & 106 & 109 & 110 & 111 & 112 & 116 & 118 & 119 & 120 & 123 & 124 & 126 & 126 & 142 \\
      \hline
    \end{tabular}
    
    \smallskip
    
    \begin{tabular}{cccccccccccccccccc}
      \hline
      $d$          &  72 &  75 &  76 &  78 &  84 \\
      $n_2(6,d,1)$ & 143 & 150 & 151 & 156 & 168 \\
      \hline
    \end{tabular}
   \end{center}
   for small values of $d$ and $n_2(6,d,1)=n_2(6,d)$ in all other cases.
\end{theorem}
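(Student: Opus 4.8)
The plan is to mirror the proofs of Theorems~\ref{thm_m_q_2_k_3_r_1} and~\ref{thm_m_q_2_k_5_r_1}, treating the lower and upper bounds separately and reducing the infinitely many distances to a finite computation. Since the values $n_2(6,d)$ are completely known, the trivial inequality $n_2(6,d,1)\ge n_2(6,d)$ already settles every $d$ for which the asserted value equals $n_2(6,d)$, in particular all $d$ outside the table. For the finitely many table entries with $n_2(6,d,1)>n_2(6,d)$ I would certify the lower bound by running the locality-$1$ integer linear program (the one with variables $x_P,y_P$ and the coupling constraints $x_P\ge 2y_P$ and $x_P\le\Lambda y_P$) and checking infeasibility at length one below the claimed optimum; by Lemma~\ref{lemma_monotone} excluding a single shorter length per $d$ is enough. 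The ambient space is $\PG(5,2)$ with $63$ points and $63$ hyperplanes, so these programs are sizeable but tractable.

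For the upper bounds I would first exhaust the three recursions of Lemma~\ref{lemma_constructions_r_1}, which here read $n_2(6,2d',1)\le 2\,n_2(6,d')$, $n_2(6,d'+64,1)\le 126+n_2(6,d')$, and $n_2(6,3d'-1,1)\le 3\,n_2(6,d')-1$, combined with the monotonicity $n_2(6,d-1,1)\le n_2(6,d,1)$. Feeding the known values of $n_2(6,\cdot)$ into these recursions yields locality-$1$ codes of the required length for the large majority of distances. For each residual value I would give an explicit multiset of points in $\PG(5,2)$, listed through its positive multiplicities as in the proof of Theorem~\ref{thm_m_q_2_k_5_r_1}; choosing every listed multiplicity to be at least $2$ makes locality $1$ automatic by Lemma~\ref{lemma_locality_1}, and the distance is confirmed by verifying $\cM(H)\le n-d$ across all $63$ hyperplanes.

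The central difficulty, already implicit for $k\le 5$ but more delicate here, is to establish $n_2(6,d,1)=n_2(6,d)$ for \emph{all} $d$ beyond the listed range. Theorem~\ref{main_thm} provides this asymptotically, but its explicit threshold is of order $q^k\cdot q^{k-1}\approx 2000$, far above the table, so the intermediate distances must be handled constructively; moreover the recursions above typically overshoot, e.g.\ doubling an optimal code for $d'=33$ gives length $138$ whereas $g_2(6,66)=133$. The key tool I would develop is a locality-$1$ analogue of Lemma~\ref{lemma_solomon_stifler_r_2}: for a Solomon--Stiffler multiset $\cM=\sigma\cdot\chi_V-\sum_{j=1}^{\ell}\chi_{S_j}$ realising $n_2(6,d)=g_2(6,d)$, one has $\cM(P)=\sigma-m_P$ with $m_P$ the number of removed subspaces through $P$, so it suffices that no point of positive multiplicity satisfies $m_P=\sigma-1$; ensuring $m_P\le\sigma-2$ whenever $\cM(P)\ge 1$ forces all positive multiplicities to be at least $2$ and hence locality $1$ by Lemma~\ref{lemma_locality_1}. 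I would verify this multiplicity condition for the distance-optimal Solomon--Stiffler types occurring for $d$ above the table.

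The hard part will be precisely this last step: hitting the optimal length while simultaneously raising every point multiplicity to at least $2$. The generic recursions reach only a sparse set of distances and usually exceed the Griesmer value by a little, so the burden rests on the sharpened Solomon--Stiffler criterion and, where it fails, on hand-built examples. Pinning down the finite set of exceptional distances --- exactly the table entries --- together with certifying their lower bounds by the nontrivially sized point/hyperplane integer programs over $\PG(5,2)$ is the most laborious and most error-prone component of the argument.
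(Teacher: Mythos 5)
Your proposal follows essentially the same route as the paper: ILP computations certify the lower bounds for the finitely many exceptional distances, the three recursions of Lemma~\ref{lemma_constructions_r_1} together with the monotonicity $n_2(k,d-1,1)\le n_2(k,d,1)$ supply most upper bounds, and explicit multisets of points in $\PG(5,2)$ handle the residual cases. Your worry that the recursions ``overshoot'' for large $d$ is unfounded --- the second recursion adds exactly the Griesmer increment ($+64$ in distance, $+126=g_2(6,d+64)-g_2(6,d)$ in length), so it propagates $n_2(6,d,1)=g_2(6,d)$ upward from the settled base cases and the proposed locality-$1$ Solomon--Stiffler criterion, while a workable alternative, is not needed (and is not used in the paper).
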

\begin{proof}
  The lower bounds for $d\le 84$ can be obtained by ILP computations. The necessary upper bounds can almost always be deduced from the constructions in Lemma~\ref{lemma_constructions_r_1} 
  and $n_2(k,d-1,1)\le n_2(k,d,1)$. For the other cases we will state explicit examples by listing sets of points for the occurring non-zero multiplicities, where the points are stated as
  integers whose base-2-representation is a generator of the point. 
  \begin{itemize}
    \item $d=9$: 2$\to\{8,13,16,42,52,55\}$, 3$\to\{1,2,4,27,32\}$; 
    \item $d=13$: 2$\to\{1,2,4,7,8,16,19,32,37,41,47,50,52,56,61\}$, 3$\to\{29\}$;
    \item $d=15$: 2$\to\{1,2,4,8,23,27,29,30,32,47,51,53,54,57,58,60\}$, 3$\to\{16\}$;
    \item $d=17$: 2$\to\{4,5,15,16,18,25,30,32,41,42,53,61,62\}$, 3$\to\{1,2,8,38,51\}$;
    \item $d=18$: 2$\to\{1,4,8,16,32,34,39,43,44,49,52,58\}$, 3$\to\{2,31,45,55,57,62\}$;
    \item $d=19$: 2$\to\{1,2,4,8,16,23,27,32,33,34,36,39,40,43,46,47,48,51,54,58,62\}$, 3$\to\{61\}$;
    \item $d=21$: 2$\to\{1,2,4,8,11,16,17,19,22,42,46,50,51,53,55,57,62\}$, 3$\to\{15,32,41,44,52\}$;
    \item $d=23$: 2$\to\{1,2,3,4,7,8,9,15,16,25,27,31,32,35,37,40,45,46,50,51,52,57,62,63\}$, 3$\to\{21\}$;
    \item $d=25$: 2$\to\{4,8,11,13,14,16,19,21,22,32,37,41,42,44,47,49,50,52,55,59,62\}$, 3$\to\{1,2,25,35,61\}$;
    \item $d=27$: 2$\to\{1,4,7,8,13,14,16,19,21,22,25,26,28,31,32,35,37,38,41,42,44,47,49,50,55,56,59,62\}$, 3$\to\{2\}$;
    \item $d=33$: 2$\to\{1,2,4,7,8,11,16,19,22,25,26,28,31,32,35,37,38,41,42,44,47,50,52,55,56,62\}$,\\ 3$\to\{13,14,21,49,59,61\}$;
    \item $d=34$: 2$\to\{1,4,7,13,14,16,19,21,22,28,31,32,41,44,47,49,50,56,59,61,62\}$,\\ 3$\to\{2,8,25,26,35,37,38,42,52,55\}$;
    \item $d=35$: 2$\to\{4,7,14,16,21,22,25,26,28,31,35,37,38,41,42,47,50,55,56,59,61,62\}$,\\ 3$\to\{1,2,8,11,13,19,32,44,49,52\}$;
    \item $d=37$: 2$\to\{1,11,16,19,21,26,28,32,35,38,42,47,49,50,52,55,61,62\}$,\\ 3$\to\{2,4,7,8,13,14,22,25,31,37,41,44,56,59\}$;
    \item $d=38$: 2$\to\{2,4,8,11,16,22,37,41,42,47,50,52,55,56,61,62\}$,\\ 3$\to\{1,7,13,14,19,21,25,26,28,31,32,35,37,38,44,49,59\}$;
    \item $d=39$: 2$\to\{1,4,7,8,11,13,14,19,21,23,37,38,41,44,52,55,56,61\}$,\\ 3$\to\{2,16,22,25,26,28,31,32,35,42,47,49,50,59,62\}$;
    \item $d=40$: 2$\to\{8,14,16,19,26,31,32,35,42,47,49,50,52,54,55,59,61\}$,\\ 3$\to\{1,2,4,7,11,13,21,22,25,28,37,38,41,44,56,62\}$;
    \item $d=41$: 2$\to\{2,21,22,38,47,49,55,59,61,62\}$, 3$\to\{1,4,7,8,11,13,14,16,19,25,26,28,31,$\\ $32,35,37,41,42,44,50,52,56\}$;
    \item $d=45$: 2$\to\{1,2,4,7,8,9,10,11,13,14,15,16,21,22,24,25,26,27,28,29,30,32,35,37,38,$\\ $41,42,43,44,45,46,47,49,50,52,56,57,58,60,61,62,63\}$, 3$\to\{12,31,55\}$;
    \item $d=47$: 2$\to\{8\}$, 3$\to\{1,2,4,7,11,13,14,16,19,21,22,25,26,28,31,32,35,37,38,41,42,44,$\\ $47,49,50,52,55,56,59,61,62\}$;
    \item $d=49$: 2$\to\{1,3,4,5,6,8,10,13,14,15,16,17,18,21,22,23,25,27,28,30,32,33,35,36,38,$\\ $39,40,42,43,44,45,47,48,50,51,52,53,55,57,59,60,62,63\}$, 3$\to\{2,9,26,29,56\}$;
    \item $d=50$: 2$\to\{1,3,4,6,8,10,13,14,15,16,18,19,21,23,25,26,27,28,30,31,32,34,35,37,38,$\\ $39,41,42,43,44,46,49,50,51,52,54,56,58,59,61,62,63\}$, 3$\to\{2,7,11,22,47,55\}$;
    \item $d=51$: 2$\to\{1,4,5,6,7,8,12,13,14,15,16,17,18,19,22,24,25,26,27,28,32,33,34,35,40,$\\ $41,42,43,51,52,53,54,55,57,58,60,61,62,63\}$, 3$\to\{2,11,21,31,36,39,45,46,48\}$;
    \item $d=53$: 2$\to\{1,3,4,6,8,9,11,12,13,14,15,16,18,20,21,23,24,26,29,30,31,32,33,35,36,$\\ $37,38,39,40,41,43,44,46,48,49,50,51,52,53,55,56,57,58,59,61,62,63\}$, 3$\to\{2,5,19,25,47\}$;
    \item $d=55$: 2$\to\{1,3,4,6,8,10,13,15,16,18,21,23,25,26,27,28,30,33,35,36,38,40,42,45,47,$\\ $48,50,53,55,57,59,60,62\}$, 3$\to\{2,7,11,14,19,22,31,32,37,41,44,49,52,56,61\}$;
    \item $d=57$: 2$\to\{1,3,\dots,8,10,\dots,19,21,23,\dots,28,30,\dots,37,39,\dots,44,46,47,49,\dots,56,58,$\\ $60,61,62,63\}$, 3$\to\{2,22,38,59\}$;
    \item $d=59$: 2$\to\{1,3,\dot,11,13,15,16,18,20,\dots,28,30,32,\dots,36,38,40,42,44,\dots,51,53,55,56,$\\ $57,59,\dots,,63\}$, 3$\to\{2,14,19,31,37,41,52\}$;
    \item $d=61$: 2$\to\{1,3,\dots,23,25,27,\dots,39,41,\dots,47,49,51,\dots,63\}$, 3$\to\{2,26,50\}$;
    \item $d=65$: 2$\to\{1,3,4,6,\dots,14,16,\dots,20,22,25,\dots,36,38,39,41,43,44,45,48,49,51,\dots,60,62\}$,\\ 3$\to\{2,5,15,21,24,37,40,46,47,50,56,63\}$;
    \item $d=66$: 2$\to\{1,3,4,7,\dots,12,14,\dots,19,21,\dots,37,39,\dots,42,44,47,\dots,60,62,63\}$,\\ 3$\to\{2,5,6,13,20,38,45,46,61\}$;
    \item $d=67$: 2$\to\{1,3,4,6,8,9,10,12,13,15,16,17,19,21,22,24,\dots,32,35,37,38,40,\dots,47,49,$\\ $51,52,54,56,57,58,60,61,63\}$, 3$\to\{2,7,11,14,18,20,23,33,34,36,39,50,55,59,62\}$;
    \item $d=68$: 2$\to\{3,6,10,15,16,17,19,\dots,22,24,25,26,28,29,31,33,\dots,36,38,39,40,42,43,45,46,$\\ $47,49,52,56,61\}$, 3$\to\{1,2,4,7,8,11,13,14,18,23,27,30,32,37,41,44,48,51,53,54,57,58,60,63\}$;
    \item $d=69$: 2$\to\{1,5,\dots,10,12,13,15,\dots,18,20,21,23,25,29,\dots,35,37,40,41,43,\dots,46,48,49,$\\ $51,\dots,54,57,58,59,61\}$, 3$\to\{2,3,4,11,14,19,22,26,27,28,38,39,42,47,50,55,56,62,63\}$;
    \item $d=70$: 2$\to\{1,3,5,6,8,11,12,14,16,19,21,23,24,26,29,30,32,33,36,\dots,45,48,\dots,51,54,55,$\\ $58,\dots,63\}$, 3$\to\{2,4,7,9,10,15,17,18,20,25,28,31,34,35,46,47,52,53,56,57\}$;
    \item $d=73$: 2$\to\{1,4,8,9,12,13,16,17,20,21,24,25,26,28,29,31,\dots,34,36,37,39,\dots,42,44,45,48,$\\ $49,52,53,57,60\}$, 3$\to\{2,3,6,7,10,11,14,15,18,19,22,23,27,30,35,38,43,46,47,50,51,54,55,$\\ $58,59,62,63\}$;
    \item $d=74$: 2$\to\{1,4,5,8,9,12,13,15,16,17,18,20,21,24,25,28,32,33,34,36,37,40,41,44,49,52,53,$\\ $56,57,60,61,63\}$, 3$\to\{2,3,6,7,10,11,14,19,22,23,26,27,30,31,35,38,39,42,43,46,47,50,51,$\\ $54,55,58,59,62\}$;
    \item $d=77$: 2$\to\{1,4,5,8,9,12,13,16,17,20,21,24,25,28,29,30,34,35,38,39,42,43,44,46,47,51,54,$\\ $55,58,59,62,63\}$, 3$\to\{2,3,6,7,10,11,14,15,18,19,22,23,26,27,31,32,33,36,37,40,41,45,48,$\\ $49,52,53,56,57,60,61\}$;
    \item $d=83$: 2$\to\{1,8,9,12,18,26,27,31,32,33,36,40,44,45,50,51,55,59,62,63\}$, 3$\to\{2,\dots,7,10,11,$\\ $13,\dots,17,20,\dots,25,28,29,30,34,35,37,38,39,41,42,43,46,\dots,49,52,53,54,56,57,58,60,61\}$;           
  \end{itemize}
\end{proof}

\begin{theorem}
  \label{thm_m_q_3_k_3_r_1}
  We have 
  \begin{center}
    \begin{tabular}{cccccccccccccccccc}
      \hline
      $d$          & 1 & 2 & 3 & 4 &  5 &  6 &  7 &  8 &  9 & 10 & 11 & 12 & 13 & 14 & 15 & 17 & 21 \\ 
      $n_3(3,d,1)$ & 6 & 6 & 8 & 8 & 11 & 12 & 14 & 14 & 16 & 16 & 18 & 18 & 21 & 22 & 24 & 26 & 32 \\
      \hline
    \end{tabular}
   \end{center}
   for small values of $d$ and $n_3(3,d,1)=n_3(3,d)$ in all other cases.
\end{theorem}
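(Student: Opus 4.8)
The plan is to follow the same scheme used for the binary theorems above. By definition $n_3(3,d,1)\ge n_3(3,d)$, since forgetting the locality requirement can only shorten a code, so the trivial lower bound is always available. For the finitely many values of $d$ appearing in the table I would determine $n_3(3,d,1)$ exactly, using the integer linear program of the Proposition characterising $n_q(k,d,1)$ specialised to $q=3$ and $k=3$ to certify the lower bounds; since the ambient geometry $\PG(2,3)$ has only $13$ points and $13$ lines (which here are also the hyperplanes, each carrying $4$ points), each of these programs is tiny and can be solved to optimality.

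For the matching upper bounds I would use the three recursive constructions of Lemma~\ref{lemma_constructions_r_1}, which for $q=3$, $k=3$ specialise to $n_3(3,2d',1)\le 2n_3(3,d')$, $n_3(3,d'+18,1)\le 26+n_3(3,d')$ and $n_3(3,3d'-1,1)\le 3n_3(3,d')-1$, together with the obvious monotonicity $n_3(3,d-1,1)\le n_3(3,d,1)$. Most table entries arise this way from a smaller value; for instance the tripling construction gives $n_3(3,5,1)\le 3n_3(3,2)-1=11$ and the doubling construction gives $n_3(3,6,1)\le 2n_3(3,3)=12$. For the entries not reachable in this manner, such as $d=13$ (where doubling and tripling fail to hit $13$ and the ``$+18$'' construction does not yet apply), I would instead exhibit an explicit spanning multiset of points in $\PG(2,3)$, list its nonzero point multiplicities, and verify its length, its hyperplane multiplicities (for the distance), and locality $1$ by checking via Lemma~\ref{lemma_locality_1} that no point has multiplicity exactly $1$.

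For the claim $n_3(3,d,1)=n_3(3,d)$ in all remaining cases, only the upper bound $n_3(3,d,1)\le n_3(3,d)$ requires work. The decisive tool is the ``$+18$'' construction: because $18$ is divisible by both $3$ and $9$ one has $g_3(3,d)=g_3(3,d-18)+26$, so the bound $n_3(3,d,1)\le 26+n_3(3,d-18)$ adds exactly one Griesmer increment. Since this construction appends two copies of the whole space $A=\PG(2,3)$ to an arbitrary optimal code for $d-18$, every point multiplicity becomes at least $2$ and locality $1$ is guaranteed irrespective of the base code. For $d\ge 22$ both $d$ and $d-18\ge 4$ lie in the range where the Griesmer bound is attained, whence $n_3(3,d,1)\le 26+g_3(3,d-18)=g_3(3,d)=n_3(3,d)$; the four small values $d\in\{16,18,19,20\}$ are dispatched individually (for instance $n_3(3,18,1)\le 2n_3(3,9)=26=g_3(3,18)$ by doubling, and $n_3(3,19,1)\le 26+n_3(3,1)=29=g_3(3,19)$).

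The main obstacle is the bookkeeping required to certify that these constructions actually meet $n_3(3,d)$ across the whole range of $d$, rather than through a single uniform argument. This forces one to invoke the known exact values of the ordinary function $n_3(3,d)$, including the small set of $d$ at which it strictly exceeds the Griesmer bound---most notably $n_3(3,3)=6$, reflecting the non-existence of an MDS $[5,3,3]_3$-code---and to check for each $d$ that the chosen recursion is optimal while keeping all multiplicities away from $1$. This is also precisely where the genuine exceptions originate: for $d=21$ the ``$+18$'' construction is forced through the exceptional base case $d'=3$, giving $n_3(3,21,1)\le 26+n_3(3,3)=32$, and the ILP confirms that no locality-$1$ multiset of length $31$ exists, so $n_3(3,21,1)=32>n_3(3,21)$; the isolated exception $d=17$, where neither doubling nor the ``$+18$'' reduction applies, is likewise pinned down by the ILP.
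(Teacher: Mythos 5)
Your approach matches the paper's proof: ILP computations certify the lower bounds for $d\le 21$, the constructions of Lemma~\ref{lemma_constructions_r_1} together with the monotonicity $n_3(3,d-1,1)\le n_3(3,d,1)$ supply almost all upper bounds, and explicit multisets of points in $\PG(2,3)$ handle the leftover cases (the paper states such examples for $d=5$ and $d=13$). The only substantive omissions are that you do not actually exhibit the explicit example needed for $d=13$ (where neither doubling, tripling, nor monotonicity reaches $21$) and that for $d=17$ you overlook that the tripling construction with $d'=6$ already gives the upper bound $n_3(3,17,1)\le 3\cdot 9-1=26$; otherwise the argument is essentially identical.
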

\begin{proof}
  The lower bounds for $d\le 21$ can be obtained by ILP computations. The necessary upper bounds can almost always be deduced from the constructions in Lemma~\ref{lemma_constructions_r_1} 
  and $n_3(k,d-1,1)\le n_3(k,d,1)$. For the other cases we will state explicit examples by listing sets of points for the occurring non-zero multiplicities, where the points are stated as
  integers denoting the position in the list of lexicographical minimal vectors that are generators of a point (starting to count from zero). 
  \begin{itemize}
    \item $d=5$: 2$\to\{0,1,11,12\}$, 3$\to\{4\}$;
    \item $d=13$: 2$\to\{4,5,7,11,12\}$, 3$\to\{0,1,2,9\}$.    
  \end{itemize}
\end{proof}

\begin{theorem}
  \label{thm_m_q_3_k_4_r_1}
  We have 
  \begin{center}
    \begin{tabular}{ccccccccccccccccccc}
      \hline
      $d$          & 1 & 2 &  3 &  4 &  5 &  6 &  7 &  8 &  9 & 10 & 11 & 12 & 13 & 14 & 15 & 16 & 17 & 18 \\ 
      $n_3(4,d,1)$ & 8 & 8 & 10 & 10 & 14 & 14 & 16 & 16 & 18 & 18 & 20 & 20 & 24 & 25 & 27 & 28 & 29 & 30 \\
      \hline
    \end{tabular}
    
    \smallskip    
    
    \begin{tabular}{ccccccccccccccccccc}
      \hline
      $d$          & 19 & 20 & 21 & 22 & 23 & 24 & 25 & 26 & 27 & 28 & 29 & 30 & 31 & 32 & 33 & 34 & 35 & 37 \\ 
      $n_3(4,d,1)$ & 33 & 34 & 36 & 36 & 38 & 38 & 41 & 42 & 44 & 46 & 47 & 48 & 50 & 50 & 52 & 52 & 54 & 58 \\
      \hline
    \end{tabular}    
    
    \smallskip    
    
    \begin{tabular}{ccccccccccccccccccc}
      \hline
      $d$          & 38 & 39 & 40 & 41 & 42 & 43 & 44 & 45 & 47 & 49 & 50 & 51 & 53 & 57 & 61 & 62 & 63 &  69 \\ 
      $n_3(4,d,1)$ & 59 & 61 & 62 & 63 & 64 & 67 & 68 & 69 & 72 & 75 & 76 & 78 & 80 & 87 & 93 & 94 & 95 & 104 \\
      \hline
    \end{tabular}    
        
   \end{center}
   for small values of $d$ and $n_3(4,d,1)=n_3(4,d)$ in all other cases.
\end{theorem}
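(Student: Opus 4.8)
The plan is to mirror the two-sided strategy of the preceding locality-$1$ theorems: explicit multisets of points supply the upper bounds, while exhaustive integer-linear-program computations furnish the matching lower bounds. Throughout one works in $\PG(3,3)$, so that $q^{k-1}=27$ and $\gaussm{4}{1}{3}=40$, and by Lemma~\ref{lemma_locality_1} a spanning multiset $\cM$ has locality $1$ precisely when every point of its support has multiplicity at least $2$. The natural starting point is the base value $n_3(4,1,1)=8=2k$ from Proposition~\ref{prop_r_1_2_without_d}, together with the known values of $n_3(4,d)$, which lower bound $n_3(4,d,1)$.

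For the upper bounds I would propagate small attaining examples through the three inequalities of Lemma~\ref{lemma_constructions_r_1}, which for $q=3$, $k=4$ read $n_3(4,2d',1)\le 2\,n_3(4,d')$, $n_3(4,3d'-1,1)\le 3\,n_3(4,d')-1$, and $n_3(4,d'+54,1)\le 80+n_3(4,d')$, supplemented by the monotonicity $n_3(4,d-1,1)\le n_3(4,d,1)$ from Lemma~\ref{lemma_monotone}. The doubling inequality resolves all even distances, the tripling inequality resolves the residue $d\equiv 2\pmod 3$, and the additive shift transports optimal small-distance examples into the large-$d$ regime; the remaining (mostly odd) distances are then captured by inheriting the length of the next larger value through monotonicity. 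For each $d$ where the table claims $n_3(4,d,1)=n_3(4,d)$ it then suffices to observe that one of these constructions, applied to a distance-optimal code, again yields a distance-optimal code in which every point is at least doubled, hence of locality $1$.

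A handful of residual distances --- such as $d=13$, whose asserted length $24$ lies strictly between the lengths forced by the two neighbouring even distances --- escape all of the generic constructions. These I would settle by ad hoc multisets exhibited exactly as in the proof of Theorem~\ref{thm_m_q_3_k_3_r_1}, namely by listing, for each occurring multiplicity, the points of $\PG(3,3)$ that carry it. For every such $\cM$ one verifies that it is spanning, that each point of its support has multiplicity at least $2$ (so the locality equals $1$), and that $\cM(H)\le n-d$ for every hyperplane $H$ (so the minimum distance is $d$); these are finite, purely mechanical checks.

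The main obstacle is the lower-bound side rather than the constructions. At every distance where the table records $n_3(4,d,1)>n_3(4,d)$ --- that is, where demanding locality strictly lengthens the optimal code beyond its Griesmer or best-known value --- there is no clean closed form to invoke, and one must exclude every spanning, everywhere-even multiset of the smaller cardinality whose hyperplane multiplicities all stay at most $n-d$. This is exactly the feasibility question decided by the integer linear program of the earlier proposition, so the genuine content reduces to these ILP certificates of non-existence; the difficulty is thus computational, and the only delicate bookkeeping in the write-up is tracking which residue classes of $d$ are covered by which of the three constructions.
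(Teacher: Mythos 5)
Your proposal matches the paper's proof in essentially every respect: the paper obtains the lower bounds for $d\le 69$ by ILP computations, derives most upper bounds from the constructions of Lemma~\ref{lemma_constructions_r_1} together with $n_3(k,d-1,1)\le n_3(k,d,1)$, and settles the residual distances (including $d=13$) by listing explicit multisets of points in $\PG(3,3)$ whose spanning, multiplicity, and hyperplane conditions are checked mechanically. No substantive difference.
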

\begin{proof}
  The lower bounds for $d\le 69$ can be obtained by ILP computations. The necessary upper bounds can almost always be deduced from the constructions in Lemma~\ref{lemma_constructions_r_1} 
  and $n_3(k,d-1,1)\le n_3(k,d,1)$. For the other cases we will state explicit examples by listing sets of points for the occurring non-zero multiplicities, where the points are stated as
  integers denoting the position in the list of lexicographical minimal vectors that are generators of a point (starting to count from zero). 
  \begin{itemize}
    \item $d=13$: 2$\to\{0,1,9,13,26,33\}$, 3$\to\{4,20,30,34\}$;
    \item $d=14$: 2$\to\{1,13,21,26,38\}$, 3$\to\{0,4,11,24,34\}$;
    \item $d=15$: 2$\to\{0,1,2,4,11,12,13,18,21,24,27,32\}$, 3$\to\{34\}$;
    \item $d=17$: 2$\to\{0\}$, 3$\to\{1,4,11,13,18,26,30,32,37\}$;
    \item $d=19$: 2$\to\{0,1,4,5,7,13,16,27,29,35,36,39\}$, 3$\to\{2,14,22\}$;
    \item $d=25$: 2$\to\{0,1,4,5,9,11,13,20,21,24,25,26,30,33,35,38\}$, 3$\to\{2,14,36\}$;
    \item $d=26$: 2$\to\{1,3,4,5,7,9,13,15,16,24,28,29,35,36,39\}$, 3$\to\{0,20,25,32\}$;
    \item $d=27$: 2$\to\{0,1,2,3,4,5,6,13,16,19,22,26,30,31,36,38\}$, 3$\to\{10,14,27,39\}$;
    \item $d=29$: 2$\to\{0,1,2,4,6,9,10,13,15,16,17,18,19,22,23,24,25,26,29,32,35,37\}$, 3$\to\{8\}$;
    \item $d=37$: 2$\to\{1,3,4,6,7,8,11,12,13,14,18,19,21,23,24,25,27,28,31,33,34,35,39\}$, 3$\to\{0,17,29,38\}$;
    \item $d=38$: 2$\to\{0,1,3,4,6,7,8,11,13,15,16,20,21,22,23,26,27,30,32,33,36,38\}$, 3$\to\{12,17,28,34,37\}$;
    \item $d=39$: 2$\to\{0,1,3,4,5,8,9,12,13,16,17,21,23,24,31,32,35,36,37,39\}$, 3$\to\{10,15,20,25,27,28,29\}$;
    \item $d=41$: 2$\to\{0,1,2,4,12,13,14,18,20,21,22,24,26,27,28,32,33,39\}$, 3$\to\{5,7,9,11,16,29,34,35,37\}$;
    \item $d=43$: 2$\to\{0,1,4,5,8,9,10,12,17,29,30,33,34,36\}$, 3$\to\{3,13,14,18,19,21,22,24,25,26,32,37,38\}$;
    \item $d=45$: 2$\to\{1,4,10,12,15,20,23,25,27,28,29,31,36\}$, 3$\to\{0,3,8,9,13,16,17,21,24,32,35,37,39\}$, 4$\to\{5\}$;
    \item $d=49$: 2$\to\{1,\dots,11,13,\dots,17,19,\dots,22,25,\dots,36,38\}$, 3$\to\{0,24,39\}$;
    \item $d=67$: 2$\to\{0,5,7,12,14,\dots,17,19,21,22,27,29,32,34,39\}$, 3$\to\{1,2,4,6,8,9,10,11,13,18,20,$\\ $23,\dots,26,28,30,31,33,35,\dots,38\}$;
    \item $d=68$: 2$\to\{0,1,3,4,8,12,31,\dots,39\}$, 3$\to\{5,6,7,9,10,11,13,\dots,30\}$;
  \end{itemize}
\end{proof}
We remark that all $[69,4,45]_3$-codes that have locality $r=1$, when considered as a multiset of points in $\PG(3,3)$, have maximum point multiplicity of at least $4$ (and not $3$ as in all
other stated examples for the other parameters). 

\section{Enumeration results}
\label{sec_enumeration}
Instead of solving ILPs to determine locally recoverable codes with the minimum possible length $n_q(k,d,r)$ one may also enumerate all $[n,k,d]_q$-codes with minimum possible length $n=n_q(k,d)$ 
or slightly larger length and check whether those codes have relatively small locality constants $r$. For those enumerations we have applied the software \texttt{LinCode} \cite{bouyukliev2021computer}. 
We present our computational results in Tables~\ref{table_q_2_k_5}, \ref{table_q_2_k_6}, and \ref{table_q_2_k_7}. Locality $r=1$ did not occur in any of these cases.

\begin{table}
  \begin{center}
    \begin{tabular}{l|rrrrrrrrrrrrrrrrrr}
      \hline
      $n$         &  8 & 9 & 10 & 13 & 14 & 15 & 16 & 17 & 20 & 21 & 23 & 24 & 27 & 28 & 30 & 31 & 36 & 37 \\
      $d$         &  2 & 3 &  4 &  5 &  6 &  7 &  8 &  8 &  9 & 10 & 11 & 12 & 13 & 14 & 15 & 16 & 17 & 18 \\
      \hline
      $\# r=2$    &  2 & 0 &  0 &  5 &  3 &  0 &  0 &  3 &  3 &  2 &  1 &  1 &  1 &  1 &  1 &  1 & 70 & 13 \\ 
      $\# r\ge 3$ & 25 & 5 &  4 & 10 &  3 &  1 &  1 &  1 &  0 &  0 &  0 &  0 &  0 &  0 &  0 &  0 &  0 &  1 \\
      \hline
      $\#$        & 27 & 5 &  4 & 15 &  6 &  1 &  1 &  4 &  3 &  2 &  1 &  1 &  1 &  1 &  1 &  1 & 70 & 14 \\ 
      \hline  
    \end{tabular}
    
    \smallskip
    
    \begin{tabular}{l|rrrrrrrrrrrrrrrrrr}
      \hline
      $n$         & 39 & 40 & 43 & 44 & 46 & 47 & 51 & 52 & 54 & 55 \\
      $d$         & 19 & 20 & 21 & 22 & 23 & 24 & 25 & 26 & 27 & 28 \\
      \hline
      $\# r=2$    &  7 &  3 &  5 &  2 &  2 &  1 & 12 &  4 &  2 &  1 \\ 
      $\# r\ge 3$ &  0 &  0 &  0 &  0 &  0 &  0 &  0 &  0 &  0 &  0 \\
      \hline
      $\#$        &  7 &  3 &  5 &  2 &  2 &  1 & 12 &  4 &  2 &  1 \\ 
      \hline  
    \end{tabular}
    \caption{Locality constants for $5$-dimensional binary codes with small lengths.}
    \label{table_q_2_k_5}
  \end{center}
\end{table}

\begin{table}
  \begin{center}
    \begin{tabular}{l|rrrrrrrrrrrrrrrrrrrrrr}
      \hline
      $n$         & 7 & 10 & 11 & 12 & 14 & 15 & 17 & 18 &  22 & 23 & 25 & 26 & 29 &    30 & 30 &  31 & 31 & 32 \\ 
      $d$         & 2 &  3 &  4 &  4 &  5 &  6 &  7 &  8 &   9 & 10 & 11 & 12 & 13 &    13 & 14 &  14 & 15 & 16 \\ 
      \hline
      $\# r=2$    & 0 &  0 &  0 &  1 &  0 &  1 &  0 &  1 &  93 & 12 &  4 &  2 &  0 &  4786 &  0 & 295 &  0 &  0 \\ 
      $\# r\ge 3$ & 1 &  4 &  2 & 40 & 11 &  4 &  3 &  1 & 155 & 17 &  1 &  0 &  2 &   133 &  2 &  18 &  1 &  1 \\ 
      \hline
      $\#$        & 1 &  4 &  2 & 41 & 11 &  5 &  3 &  2 &  248 & 29 &  5 &  2 &  2 & 4919 &  2 & 313 &  1 &  1 \\ 
      \hline  
    \end{tabular}
    
    \smallskip
    
    \begin{tabular}{l|rrrrrrrrrrrrrrrrrrrrrr}    
     \hline
      $n$         & 37 & 38 &  41 & 42 \\
      $d$         & 17 & 18 &  19 & 20 \\
      \hline
      $\# r=2$    &  2 &  1 & 235 & 35 \\ 
      $\# r\ge 3$ &  0 &  0 &   0 &  0 \\
      \hline
      $\#$        &  2 &  1 & 235 & 35 \\ 
      \hline  
    \end{tabular}    
    
    \caption{Locality constants for $6$-dimensional binary codes with small lengths.}
    \label{table_q_2_k_6}
  \end{center}
\end{table}

\begin{table}
  \begin{center}
    \begin{tabular}{l|rrrrrrrrrrrrrrrrrrrrrr}
      \hline
      $n$         & 15 &    16 &     17 & 16 &  17 &    18 & 18 & 19 &     20 & 19 & 20 & 23 &            24 \\ 
      $d$         &  5 &     5 &      5 &  6 &   6 &     6 &  7 &  7 &      7 &  8 &  8 &  9 &             9 \\
      \hline
      $\# r=2$    &  0 &     0 &     48 &  0 &   0 &    25 &  0 &  0 &     23 &  0 &  1 &  0 & $\ge$ 411     \\ 
      $\# r\ge 3$ &  6 &  4013 & 459067 &  3 & 377 & 76922 &  2 & 82 & 207117 &  1 & 25 & 29 & $\ge$ 3572275 \\
      \hline
      $\#$        &  6 &  4013 & 459115 &  3 & 377 & 76947 &  2 & 82 & 207140 &  1 & 26 & 29 & $\ge$ 3572686 \\ 
      \hline  
    \end{tabular}
    
    \smallskip
    
    \begin{tabular}{l|rrrrrrrrrrrrrrrrrrrrrr}
      \hline
      $n$         & 24 &     25 & 26 &   27 & 27 &  28 & \\    
      $d$         & 10 &     10 & 11 &   11 & 12 &  12 & \\
      \hline
      $\# r=2$    &  0 &    502 &  0 &   40 &  0 &   7 & \\
      $\# r\ge 3$ &  6 & 188064 &  2 & 1598 &  1 & 122 & \\
      \hline
      $\#$        &  6 & 188566 &  2 & 1638 &  1 & 129 & \\
      \hline  
    \end{tabular}
    
    \caption{Locality constants for $7$-dimensional binary codes with small lengths.}
    \label{table_q_2_k_7}
  \end{center}
\end{table}

    


\begin{thebibliography}{GWFHH19}

\bibitem[ABH{\etalchar{+}}18]{agarwal2018combinatorial}
Abhishek Agarwal, Alexander Barg, Sihuang Hu, Arya Mazumdar, and Itzhak Tamo.
\newblock Combinatorial alphabet-dependent bounds for locally recoverable
  codes.
\newblock {\em IEEE Transactions on Information Theory}, 64(5):3481--3492,
  2018.

\bibitem[APD{\etalchar{+}}13]{asteris2013xoring}
Megasthenis Asteris, Dimitris Papailiopoulos, Alexandros~G Dimakis, Ramkumar
  Vadali, Scott Chen, and Dhruba Borthakur.
\newblock {X}{O}{R}ing elephants: {N}ovel erasure codes for big data.
\newblock {\em Proceedings of the VLDB Endowment}, 6(5):325--336, 2013.

\bibitem[BBK21]{bouyukliev2021computer}
Iliya Bouyukliev, Stefka Bouyuklieva, and Sascha Kurz.
\newblock Computer classification of linear codes.
\newblock {\em IEEE Transactions on Information Theory}, 67(12):7807--7814,
  2021.

\bibitem[BCP97]{MR1484478}
Wieb Bosma, John Cannon, and Catherine Playoust.
\newblock The {M}agma algebra system. {I}. {T}he user language.
\newblock {\em J. Symbolic Comput.}, 24(3-4):235--265, 1997.
\newblock Computational algebra and number theory (London, 1993).
\newblock URL: \url{http://dx.doi.org/10.1006/jsco.1996.0125}, \href
  {https://doi.org/10.1006/jsco.1996.0125} {\path{doi:10.1006/jsco.1996.0125}}.

\bibitem[BE97]{bierbrauer1997family}
J{\"u}rgen Bierbrauer and Yves Edel.
\newblock A family of $2$-weight codes related to {B}{C}{H}-codes.
\newblock {\em Journal of Combinatorial Designs}, 5(5):391--396, 1997.

\bibitem[BJ01]{bouyukliev2001optimal}
Iliya Bouyukliev and David~B. Jaffe.
\newblock Optimal binary linear codes of dimension at most seven.
\newblock {\em Discrete Mathematics}, 226(1-3):51--70, 2001.

\bibitem[BJV00]{bouyukhev2000smallest}
Iliya Bouyukliev, David~B. Jaffe, and Vesselin Vavrek.
\newblock The smallest length of eight-dimensional binary linear codes with
  prescribed minimum distance.
\newblock {\em IEEE Transactions on Information Theory}, 46(4):1539--1544,
  2000.

\bibitem[BM73]{baumert1973note}
L.D. Baumert and R.J. McEliece.
\newblock A note on the {G}riesmer bound.
\newblock {\em IEEE Transactions on Information Theory}, pages 134--135, 1973.

\bibitem[CHL07]{chen2007maximally}
Minghua Chen, Cheng Huang, and Jin Li.
\newblock On the maximally recoverable property for multi-protection group
  codes.
\newblock In {\em 2007 IEEE International Symposium on Information Theory},
  pages 486--490. IEEE, 2007.

\bibitem[CM15]{cadambe2015bounds}
Viveck~R Cadambe and Arya Mazumdar.
\newblock Bounds on the size of locally recoverable codes.
\newblock {\em IEEE Transactions on Information Theory}, 61(11):5787--5794,
  2015.

\bibitem[GC14]{goparaju2014binary}
Sreechakra Goparaju and Robert Calderbank.
\newblock Binary cyclic codes that are locally repairable.
\newblock In {\em 2014 IEEE International Symposium on Information Theory},
  pages 676--680. IEEE, 2014.

\bibitem[GFHWH17]{grezet2017binary}
Matthias Grezet, Ragnar Freij-Hollanti, Thomas Westerb{\"a}ck, and Camilla
  Hollanti.
\newblock On binary matroid minors and applications to data storage over small
  fields.
\newblock In {\em International Castle Meeting on Coding Theory and
  Applications}, pages 139--153. Springer, 2017.

\bibitem[GFHWH19]{grezet2019alphabet}
Matthias Grezet, Ragnar Freij-Hollanti, Thomas Westerb{\"a}ck, and Camilla
  Hollanti.
\newblock Alphabet-dependent bounds for linear locally repairable codes based
  on residual codes.
\newblock {\em IEEE Transactions on Information Theory}, 65(10):6089--6100,
  2019.

\bibitem[GHSY12]{gopalan2012locality}
Parikshit Gopalan, Cheng Huang, Huseyin Simitci, and Sergey Yekhanin.
\newblock On the locality of codeword symbols.
\newblock {\em IEEE Transactions on Information Theory}, 58(11):6925--6934,
  2012.

\bibitem[GJR23]{gruica2023lrcs}
Anina Gruica, Benjamin Jany, and Alberto Ravagnani.
\newblock {L}{R}{C}s: Duality, {L}{P} bounds, and field size.
\newblock {\em arXiv preprint 2309.03676}, 2023.

\bibitem[Gri60]{griesmer1960bound}
James~H. Griesmer.
\newblock A bound for error-correcting codes.
\newblock {\em IBM Journal of Research and Development}, 4(5):532--542, 1960.

\bibitem[GWFHH19]{grezet2019uniform}
Matthias Grezet, Thomas Westerb{\"a}ck, Ragnar Freij-Hollanti, and Camilla
  Hollanti.
\newblock Uniform minors in maximally recoverable codes.
\newblock {\em IEEE Communications Letters}, 23(8):1297--1300, 2019.

\bibitem[Hel83]{helleseth1983new}
Tor Helleseth.
\newblock New constructions of codes meeting the {G}riesmer bound.
\newblock {\em IEEE Transactions on Information Theory}, 29(3):434--439, 1983.

\bibitem[HSX{\etalchar{+}}12]{huang2012erasure}
Cheng Huang, Huseyin Simitci, Yikang Xu, Aaron Ogus, Brad Calder, Parikshit
  Gopalan, Jin Li, and Sergey Yekhanin.
\newblock Erasure coding in windows azure storage.
\newblock In {\em Proceedings of the 2012 USENIX conference on Annual Technical
  Conference}, pages 15--26, 2012.

\bibitem[HXC16]{hao2016some}
Jie Hao, Shu-Tao Xia, and Bin Chen.
\newblock Some results on optimal locally repairable codes.
\newblock In {\em 2016 IEEE International Symposium on Information Theory
  (ISIT)}, pages 440--444. IEEE, 2016.

\bibitem[HXC17]{hao2017optimal}
Jie Hao, Shu-Tao Xia, and Bin Chen.
\newblock On optimal ternary locally repairable codes.
\newblock In {\em 2017 IEEE International Symposium on Information Theory
  (ISIT)}, pages 171--175. IEEE, 2017.

\bibitem[HXS{\etalchar{+}}20]{hao2020bounds}
Jie Hao, Shu-Tao Xia, Kenneth~W Shum, Bin Chen, Fang-Wei Fu, and Yixian Yang.
\newblock Bounds and constructions of locally repairable codes: parity-check
  matrix approach.
\newblock {\em IEEE Transactions on Information Theory}, 66(12):7465--7474,
  2020.

\bibitem[HYUS15]{huang2015cyclic}
Pengfei Huang, Eitan Yaakobi, Hironori Uchikawa, and Paul~H. Siegel.
\newblock Cyclic linear binary locally repairable codes.
\newblock In {\em 2015 IEEE Information Theory Workshop (ITW)}, pages 1--5.
  IEEE, 2015.

\bibitem[HYUS16]{huang2016binary}
Pengfei Huang, Eitan Yaakobi, Hironori Uchikawa, and Paul~H. Siegel.
\newblock Binary linear locally repairable codes.
\newblock {\em IEEE Transactions on Information Theory}, 62(11):6268--6283,
  2016.

\bibitem[KY21]{kurz2021pir}
Sascha Kurz and Eitan Yaakobi.
\newblock {P}{I}{R} codes with short block length.
\newblock {\em Designs, Codes and Cryptography}, 89:559--587, 2021.

\bibitem[LXY18]{luo2018optimal}
Yuan Luo, Chaoping Xing, and Chen Yuan.
\newblock Optimal locally repairable codes of distance $3$ and $4$ via cyclic
  codes.
\newblock {\em IEEE Transactions on Information Theory}, 65(2):1048--1053,
  2018.

\bibitem[Mar97]{maruta1997achievement}
Tatsuya Maruta.
\newblock On the achievement of the griesmer bound.
\newblock {\em Designs, Codes and Cryptography}, 12:83--87, 1997.

\bibitem[PKLK12]{prakash2012optimal}
N.~Prakash, Govinda~M. Kamath, V.~Lalitha, and P.~Vijay Kumar.
\newblock Optimal linear codes with a local-error-correction property.
\newblock In {\em 2012 IEEE International Symposium on Information Theory
  Proceedings}, pages 2776--2780. IEEE, 2012.

\bibitem[SS65]{solomon1965algebraically}
Gustave Solomon and Jack~J. Stiffler.
\newblock Algebraically punctured cyclic codes.
\newblock {\em Information and Control}, 8(2):170--179, 1965.

\bibitem[TB14]{tamo2014family}
Itzhak Tamo and Alexander Barg.
\newblock A family of optimal locally recoverable codes.
\newblock {\em IEEE Transactions on Information Theory}, 60(8):4661--4676,
  2014.

\bibitem[TBGC15]{tamo2015cyclic}
Itzhak Tamo, Alexander Barg, Sreechakra Goparaju, and Robert Calderbank.
\newblock Cyclic {L}{R}{C} codes and their subfield subcodes.
\newblock In {\em 2015 IEEE International Symposium on Information Theory
  (ISIT)}, pages 1262--1266. IEEE, 2015.

\bibitem[TPD16]{tamo2016optimal}
Itzhak Tamo, Dimitris~S. Papailiopoulos, and Alexandros~G. Dimakis.
\newblock Optimal locally repairable codes and connections to matroid theory.
\newblock {\em IEEE Transactions on Information Theory}, 62(12):6661--6671,
  2016.

\bibitem[vT81]{van1981smallest}
Henk~C.A. van Tilborg.
\newblock The smallest length of binary $7$--dimensional linear codes with
  prescribed minimum distance.
\newblock {\em Discrete Mathematics}, 33(2):197--207, 1981.

\bibitem[WFHEH16]{westerback2016combinatorics}
Thomas Westerb{\"a}ck, Ragnar Freij-Hollanti, Toni Ernvall, and Camilla
  Hollanti.
\newblock On the combinatorics of locally repairable codes via matroid theory.
\newblock {\em IEEE Transactions on Information Theory}, 62(10):5296--5315,
  2016.

\bibitem[XKG22]{xi2022optimal}
Yuanxiao Xi, Xiangliang Kong, and Gennian Ge.
\newblock Optimal quaternary locally repairable codes attaining the
  {S}ingleton-like bound.
\newblock {\em arXiv preprint 2206.05805}, 2022.

\end{thebibliography}

\newcommand{\etalchar}[1]{$^{#1}$}

\end{document}